\newtheorem{theorem}{Theorem}[section]
\newtheorem{corollary}[theorem]{Corollary}
\newtheorem{lemma}[theorem]{Lemma}
\newtheorem*{notation}{Notation}
\theoremstyle{definition}
\newtheorem{definition}[theorem]{Definition}
\newtheorem*{assumptions}{Assumptions}
\newtheorem*{remarks}{Remarks}
\numberwithin{equation}{section}
\newcommand{\cone}[1]{#1^\vee}
\newcommand{\conv}{\mathrm{conv}}
\newcommand{\fpi}{\pi}
\newcommand{\latt}{L}
\newcommand{\lattadm}{\latt_A}
\newcommand{\N}{\mathbb{N}}
\newcommand{\qproj}{\mathscr{Q}}
\newcommand{\qcone}{\cone{\qproj}}
\newcommand{\R}{\mathbb{R}}
\newcommand{\sproj}{\mathscr{S}}
\newcommand{\scone}{\cone{\sproj}}
\newcommand{\tet}[1]{\tau^{(#1)}}
\newcommand{\tri}{\mathcal{T}}
\newcommand{\Z}{\mathbb{Z}}
\newcommand{\co}{\colon\thinspace}
\title{Maximal admissible faces and asymptotic bounds \\
    for the normal surface solution space}
\author{Benjamin A.~Burton}
\date{December 10, 2010}
\begin{document}

\maketitle

\begin{abstract}
    The enumeration of normal surfaces is a key bottleneck in
    computational three-di\-men\-sion\-al topology.  The underlying procedure
    is the enumeration of admissible vertices of a high-dimensional
    polytope, where admissibility is a powerful but non-linear and
    non-convex constraint.
    The main results of this paper are significant improvements upon the
    best known asymptotic bounds on the number of admissible vertices, using
    polytopes in both the standard normal surface coordinate system and the
    streamlined quadrilateral coordinate system.

    To achieve these results we examine the layout of admissible points
    within these polytopes.  We show that these points correspond to
    well-behaved substructures of the face lattice, and we study
    properties of the corresponding ``admissible faces''.
    Key lemmata include upper bounds on the number of
    maximal admissible faces of each dimension, and a bijection between
    the maximal admissible faces in the two coordinate systems mentioned above.

    \medskip
    \noindent \textbf{AMS Classification}\quad
    Primary
    52B05; % Convex and discrete geometry; polytopes and polyhedra;
           % combinatorial properties (number of faces, shortest paths, etc.)
    Secondary
    57N10, % Manifolds and cell complexes; topological manifolds;
           % Topology of general 3-manifolds
    57Q35  % Manifolds and cell complexes; PL-topology;
           % embeddings and immersions

    \medskip
    \noindent \textbf{Keywords}\quad
    3-manifolds, normal surfaces, polytopes, face lattice, complexity
\end{abstract}

%%%%%%%%%%%%%%%%%%%%%%%%%%%%%%%%%%%%%%%%%%%%%%%%%%%%%%%%%%%%%%%%%%%%%%%%
%
%   Main body of the paper
%
%%%%%%%%%%%%%%%%%%%%%%%%%%%%%%%%%%%%%%%%%%%%%%%%%%%%%%%%%%%%%%%%%%%%%%%%

\section{Introduction} \label{s-intro}

Computational topology in three dimensions is a diverse and expanding
field, with algorithms drawing on a range of ideas from
geometry, combinatorics, algebra, analysis, and operations research.
A key tool in this field is \emph{normal surface theory}, which
allows us to convert difficult topological decision and decomposition
problems into more tractable
enumeration and optimisation problems over convex polytopes and polyhedra.

In this paper we develop new asymptotic bounds on the complexity
of problems in normal surface theory, which in turn impacts upon a wide
range of topological algorithms.  The techniques that we use are based on
ideas from polytope theory, and the bulk of this paper focuses on the
combinatorics of the various polytopes and polyhedra that arise in
the study of normal surfaces.

Normal surface theory was introduced by Kneser \cite{kneser29-normal},
and further developed by Haken
\cite{haken61-knot,haken62-homeomorphism}
and Jaco and Oertel \cite{jaco84-haken} for use in algorithms.
The core machinery of normal surface theory is now central to many important
algorithms in three-dimensional topology, including
unknot recognition \cite{haken61-knot},
3-sphere recognition
\cite{burton10-quadoct,jaco03-0-efficiency,rubinstein95-3sphere,
thompson94-thinposition},
connected sum decomposition
\cite{jaco03-0-efficiency,jaco95-algorithms-decomposition},
and testing for embedded incompressible surfaces
\cite{burton09-ws,jaco84-haken}.

The core ideas behind normal surface theory are as follows.
Suppose we are searching for an ``interesting'' surface embedded within a
3-manifold (such as a disc bounded by the unknot, or a sphere that
splits apart a connected sum).  We construct a high-dimensional
convex polytope called the \emph{projective solution space}, and we
define the \emph{admissible} points within this polytope to be
those that satisfy an additional set of non-linear and non-convex
constraints.  The importance of this polytope is that every admissible
and rational point within it corresponds to an embedded surface within our
3-manifold, and moreover all embedded ``normal'' surfaces within our
3-manifold are represented in this way.

We then prove that, if any interesting surfaces exist, at least one
must be represented by a \emph{vertex}
of the projective solution space.  Our algorithm is now straightforward:
we construct this polytope, enumerate its admissible vertices,
reconstruct the corresponding surfaces, and test whether any of
these surfaces is ``interesting''.

The development of this machinery was a breakthrough in
computational topology.  However, the algorithms that it produces
are often extremely slow.  The main bottleneck lies in enumerating the
admissible vertices of the projective solution space---polytope vertex
enumeration is NP-hard in general \cite{dyer83-complexity,khachiyan08-hard},
and there is no evidence to suggest that our particular polytope is
simple enough or special enough to circumvent this.\footnote{%
    In fact, Agol et~al.\ have proven that the \emph{knot genus}
    problem is NP-complete \cite{agol02-knotgenus}.
    The knot genus algorithm uses normal surface theory, but in
    a more complex way than we describe here.}

Nevertheless, there \emph{is} strong evidence to suggest that these
procedures can be made significantly faster than current theoretical
bounds imply.  For instance, detailed experimentation with the
quadrilateral-to-standard conversion procedure---a key step in the
current state-of-the-art enumeration algorithm---suggests that this
conversion runs in small polynomial time, even though the best
theoretical bound remains exponential \cite{burton09-convert}.
Comprehensive experimentation with the projective solution space
\cite{burton10-complexity} suggests that
the number of admissible vertices, though exponential, grows at a rate
below $O(1.62^n)$ in the average case and around $O(2.03^n)$ in the
worst case, compared to the best theoretical bound of approximately
$O(29.03^n)$ (which we improve upon in this paper).  Here
the ``input size'' $n$ is the number of tetrahedra in the underlying
3-manifold triangulation.

The key to this improved performance is our \emph{admissibility}
constraint.  Admissibility is a powerful constraint that eliminates
almost all of the complexity of the projective solution space (we see
this vividly illustrated in Section~\ref{s-lattice}).  However, as a non-linear
and non-convex constraint it is difficult to weave admissibility
into complexity arguments, particularly if we wish to draw on the
significant body of work from the theory of convex polytopes.

The ultimate aim of this paper is to bound the number of admissible vertices
of the projective solution space.  This is a critical quantity for the
running times of normal surface algorithms.  First, however well we
exploit admissibility in our vertex enumeration algorithms, running times
\emph{must} be at least as large as the output size---that is, the
number of admissible vertices.  Moreover, for some topological algorithms,
the procedure that we perform on each admissible vertex is significantly
slower than the enumeration of these vertices (see Hakenness testing
for an example \cite{burton09-ws}).  In these cases, the
number of admissible vertices becomes a central factor in the overall
running time.

Enumeration algorithms typically work in one of two coordinate systems:
\emph{standard coordinates} of dimension $7n$, and \emph{quadrilateral
coordinates} of dimension $3n$.
The strongest bounds known to date are as follows:
\begin{itemize}
    \item In standard coordinates, the first bound on the number of
    admissible vertices of the projective solution space was
    $128^n$, due to Hass et~al.\ \cite{hass99-knotnp}.
    The author has recently refined this bound to
    $O(\phi^{7n}) \simeq O(29.03^n)$, where $\phi$ is the golden ratio
    \cite{burton10-complexity}.\footnote{The paper
        \cite{burton10-complexity} also places
        a \emph{lower} bound on the worst case complexity of
        $\Omega(17^{n/4}) \simeq \Omega(2.03^n)$.}

    \item In quadrilateral coordinates, the best general bound is
    $4^n$ (this bound does not appear in the literature but is
    well known, and we outline the simple proof in
    Section~\ref{s-prelim-tri}).

    \item In the case where the input is a one-vertex triangulation,
    the author sketches a bound of approximately $O(15^n/\sqrt{n})$
    admissible vertices in standard coordinates \cite{burton10-dd}.
    This case is important
    for practical computation, as we discuss further in
    Section~\ref{s-prelim}.
\end{itemize}

The main results of this paper are as follows.
In standard coordinates, we tighten the general bound from approximately
$O(29.03^n)$ to $O(14.556^n)$ (Theorem~\ref{t-std-bound}).
In quadrilateral coordinates, we tighten the general bound from
$4^n$ to approximately $O(3.303^n)$ (Theorem~\ref{t-quad-bound}).
For the one-vertex case in standard coordinates, we strengthen
$O(15^n/\sqrt{n})$ to approximately $O(4.852^n)$
(Theorem~\ref{t-std-bound-v1}).

We achieve these results by studying not just the admissible vertices,
but the broader region formed by \emph{all} admissible points within the
projective solution space.
Although this region is not convex, we show that
it corresponds to a well-behaved structure within the face lattice of the
surrounding
polytope.  By working through maximal elements of this structure---that
is, \emph{maximal admissible faces} of the polytope---we are able to
draw on strong results from polytope theory such as McMullen's
upper bound theorem \cite{mcmullen70-ubt},
yet still enjoy the significant reduction in
complexity that admissibility provides.

To contrast this paper from earlier work:
The bound of $O(29.03^n)$ in \cite{burton10-complexity}
is a straightforward consequence of McMullen's theorem,
applied once to the entire projective solution space without using
admissibility at all.
In this paper, the key innovations are the decomposition of the admissible
region into maximal admissible faces, and the combinatorial analysis
of these maximal admissible faces.  These new techniques allow us to
apply McMullen's theorem repeatedly in a careful and targeted fashion,
ultimately yielding the stronger bounds outlined above.

Throughout this paper, we restrict our attention to closed
and connected 3-manifolds.
In addition to the main results listed above, we also prove several
key lemmata that may be useful in future work.
These include an upper bound of $3^{n-1-d}$ maximal admissible faces
of dimension $d$ in quadrilateral coordinates (Lemma~\ref{l-quad-bound-dim}),
a bijection between maximal admissible faces in quadrilateral
coordinates and standard coordinates (Lemma~\ref{l-bijection-std-quad}),
and a tight upper bound of $n+1$ vertices for any triangulation with
$n > 2$ tetrahedra (Lemma~\ref{l-min-vert}).

The layout of this paper is as follows.
Section~\ref{s-prelim} begins with an overview of relevant
results from normal surface theory and polytope theory.
In Section~\ref{s-lattice} we study the structure of admissible points
in detail, focusing in particular on admissible faces and
maximal admissible faces of the projective solution space.

We turn our attention to asymptotic bounds in Section~\ref{s-ubt},
focusing on properties of the bounds obtained by McMullen's theorem.
In Section~\ref{s-quad} we prove our main results in quadrilateral
coordinates, and in Section~\ref{s-std} we transport these results
to standard coordinates with the help of the aforementioned bijection.
Section~\ref{s-discussion} finishes with a discussion of our techniques,
including experimental comparisons and possibilities for further
improvement.

\section{Preliminaries} \label{s-prelim}

In this section we recount key definitions and results from the two
core areas of normal surface theory and polytope theory.
Section~\ref{s-prelim-tri} covers 3-manifold triangulations and normal
surfaces, and Section~\ref{s-prelim-polytopes} discusses
convex polytopes and polyhedra.

In this brief summary we only give the details
necessary for this paper.  For a more thorough overview of these topics,
the reader is referred to Hass et~al.\ \cite{hass99-knotnp} for the
theory of normal surfaces and its role in computational topology, and to
Gr\"unbaum \cite{grunbaum03} or Ziegler \cite{ziegler95} for the theory
of convex polytopes.

\begin{assumptions}
    The following assumptions and conventions run throughout this paper:
    \begin{itemize}
        \item We always assume that we are working with a closed 3-manifold
        triangulation $\tri$ constructed from precisely $n$ tetrahedra
        (see Section~\ref{s-prelim-tri} for details), and we always
        assume that this triangulation is connected;
        \item The words ``polytope'' and ``polyhedron'' refer
        exclusively to \emph{convex} polytopes and polyhedra;
        \item For convenience, we allow arbitrary integers $a,b$ in the
        binomial coefficients $\binom{a}{b}$ but we define $\binom{a}{b} = 0$
        unless $0 \leq b \leq a$.
    \end{itemize}
\end{assumptions}

\subsection{Triangulations and normal surfaces} \label{s-prelim-tri}

A \emph{closed 3-manifold} is a compact topological space that locally
``looks'' like $\R^3$ at every point.\footnote{More precisely,
a closed 3-manifold is a compact and separable metric space in which every
point has an open neighbourhood homeomorphic to $\R^3$ \cite{hempel76}.}
A \emph{closed 3-manifold triangulation} is a collection of $n$
tetrahedra whose 2-dimensional faces are affinely identified (or
``glued together'') in pairs so that the resulting topological space
is a closed 3-manifold.

We do not require these tetrahedra to be rigidly embedded in some larger
space---in other words, tetrahedra can be ``bent'' or ``stretched''.
In particular, we allow identifications between two faces of the same
tetrahedron; likewise, we may find that multiple edges or vertices of
the same tetrahedron become identified together as a result of our
face gluings.  Some authors refer to such triangulations as
\emph{semi-simplicial triangulations} or \emph{pseudo-triangulations}.
This more flexible definition allows us to represent complex
topological spaces using relatively few tetrahedra, which is extremely
useful for computation.

\begin{figure}[htb]
    \centering
    \includegraphics[scale=0.5]{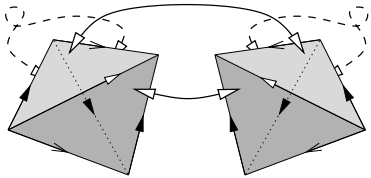}
    \caption{An example of a closed 3-manifold triangulation}
    \label{fig-s2xs1}
\end{figure}

Tetrahedron vertices that become identified together are collectively referred
to as a single \emph{vertex of the triangulation}; similarly for edges and
2-dimensional faces.
Figure~\ref{fig-s2xs1} illustrates a triangulation formed from $n=2$
tetrahedra: the two front faces of the left tetrahedron are identified
directly with the two front faces of the right tetrahedron, and
in each tetrahedron the two back faces are identified together with a
twist.\footnote{%
    The underlying 3-manifold described by this triangulation is the
    product space $S^2 \times S^1$.}
This triangulation has only one vertex (since all eight tetrahedron
vertices become identified together), and it has precisely three edges
(indicated by the three different types of arrowhead).

One-vertex triangulations are of
particular interest to computational topologists, since they often
simplify to very few tetrahedra, and since
some algorithms become significantly simpler and/or faster
in a one-vertex setting.  Several authors have shown that one-vertex
triangulations exist for a wide range of 3-manifolds with a
variety of procedures to construct them; see
\cite{jaco03-0-efficiency,martelli02-decomp,matveev90-complexity}
for details.
We devote particular attention to one-vertex triangulations in
Theorem~\ref{t-std-bound-v1} of this paper.

As indicated earlier, for the remainder of this paper we assume that we
are working with a closed (and connected) 3-manifold triangulation $\tri$
constructed from $n$ tetrahedra.
A \emph{normal surface} within $\tri$ is a closed 2-dimensional surface
embedded within $\tri$ that intersects each tetrahedron of
$\tri$ in a collection of zero or more \emph{normal discs}.
A normal disc is either an embedded \emph{triangle} (meeting three
distinct edges of the tetrahedron) or an embedded \emph{quadrilateral}
(meeting four distinct edges), as illustrated in Figure~\ref{fig-normaldiscs}.

\begin{figure}[htb]
    \centering
    \includegraphics[scale=0.5]{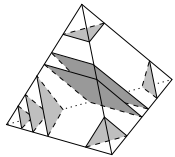}
    \caption{Normal triangles and quadrilaterals within a tetrahedron}
    \label{fig-normaldiscs}
\end{figure}

Like the tetrahedra themselves, triangles and quadrilaterals need not be
rigidly embedded (i.e., they can be ``bent'').  However, they must
intersect the edges of the tetrahedron transversely, and they cannot meet
the vertices of the tetrahedron at all.
Figure~\ref{fig-s2xs1-normal} illustrates a normal surface within the
example triangulation given earlier.\footnote{%
    This surface is an embedded essential 2-dimensional sphere.}
Normal surfaces may be disconnected or empty.

\begin{figure}[htb]
    \centering
    \includegraphics[scale=0.5]{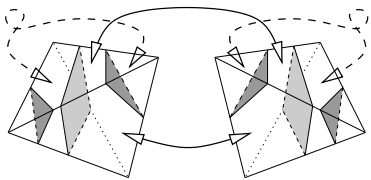}
    \caption{A normal surface within a closed 3-manifold triangulation}
    \label{fig-s2xs1-normal}
\end{figure}

Within each tetrahedron there are four \emph{types} of triangle and
three \emph{types} of quadrilateral, defined by which edges of the
tetrahedron they intersect (Figure~\ref{fig-normaldiscs} includes discs
of all four triangle types but only one of the three quadrilateral types).
We can represent a normal surface by the integer vector
\[ \left(\ t_{1,1},t_{1,2},t_{1,3},t_{1,4},\ q_{1,1},q_{1,2},q_{1,3}\ ;
         \ t_{2,1},t_{2,2},t_{2,3},t_{2,4},\ q_{2,1},q_{2,2},q_{2,3}\ ;
         \ \ldots,q_{n,3}\ \right) \in \Z^{7n}, \]
where each $t_{i,j}$ or $q_{i,j}$ is the number of triangles or
quadrilaterals respectively of the $j$th type within the $i$th tetrahedron.

A key theorem of Haken \cite{haken61-knot} states that an arbitrary
integer vector in $\R^{7n}$ represents a normal surface if and only if:
\begin{enumerate}[(i)]
    \item all coordinates of the vector are non-negative;
    \item the vector satisfies the \emph{standard matching equations},
    which are $6n$ linear homogeneous equations in $\R^{7n}$
    that depend on $\tri$;
    \item the vector satisfies the \emph{quadrilateral constraints},
    which state that for each $i$, at most one of the three
    quadrilateral coordinates $q_{i,1},q_{i,2},q_{i,3}$ is non-zero.
\end{enumerate}
Any vector in $\R^{7n}$ that satisfies all three of these constraints is
called \emph{admissible} (note that we extend this definition to apply
to non-integer vectors).
The quadrilateral constraints are the most problematic of these
three conditions, since they are non-linear constraints with a
non-convex solution set.

We refer to the region of $\R^{7n}$ that satisfies the non-negativity
constraints and the standard matching equations as the
\emph{standard solution cone},
which we denote $\scone$; this is a pointed polyhedral cone in $\R^{7n}$
with apex at the origin.  We also consider the cross-section of this
cone with the \emph{projective hyperplane} $\sum t_{i,j} + \sum q_{i,j} = 1$,
which we call the \emph{standard projective solution space} and denote
$\sproj$; this is a bounded
polytope in $\R^{7n}$.  The admissible vertices of the standard projective
solution space---that is, the vertices that also satisfy the quadrilateral
constraints---are called the \emph{standard solution set}.

Tollefson \cite{tollefson98-quadspace} defines a smaller
vector representation in $\R^{3n}$,
obtained by considering only the quadrilateral coordinates $q_{i,j}$ and
ignoring the triangular coordinates $t_{i,j}$.
This smaller coordinate system is more efficient for computation, but
its use is restricted to a smaller range of topological algorithms.
Tollefson proves a theorem similar to Haken's, in that an arbitrary integer
vector in $\R^{3n}$ represents a normal surface if and only if:
\begin{enumerate}[(i)]
    \item all coordinates of the vector are non-negative;
    \item the vector satisfies the \emph{quadrilateral matching equations},
    which is a smaller family of linear homogeneous equations in $\R^{3n}$
    that again depend on $\tri$;
    \item the vector satisfies the \emph{quadrilateral constraints}
    as defined above.
\end{enumerate}

Again, any vector in $\R^{3n}$ that satisfies all three of these constraints
is called \emph{admissible}.
The region of $\R^{3n}$ that satisfies the non-negativity
constraints and the quadrilateral matching equations is the
\emph{quadrilateral solution cone}, denoted $\qcone$, which is
a pointed polyhedral cone in $\R^{3n}$ with apex at the origin.
The cross-section with the \emph{projective hyperplane} $\sum q_{i,j} = 1$
is likewise called the \emph{quadrilateral projective solution space}
and denoted $\qproj$; this is a bounded polytope in $\R^{3n}$.
The admissible vertices of the quadrilateral projective solution space
are called the \emph{quadrilateral solution set}.

In general, when we work in $\R^{7n}$ we say we are working in
\emph{standard coordinates}, and when we work in $\R^{3n}$ we say we are
working in \emph{quadrilateral coordinates}.  See
\cite{burton09-convert} for a detailed discussion
of the relationship between these coordinate systems as well as
fast algorithms for converting between them.

Enumerating the standard and quadrilateral solution sets is a common
feature of high-level algorithms in 3-manifold topology.
Moreover, this enumeration is often the computational bottleneck, and so
it is important to have fast enumeration algorithms as well as
good complexity bounds on the size of each solution set.  The latter
problem is the main focus of this paper.

As noted in the introduction, the only upper bound to date on the size
of the quadrilateral solution set is the well-known but
unpublished\footnote{%
    Although the bound of $\leq 4^n$ does not appear in the
    literature, an asymptotic bound of
    $O(4^n/\sqrt{n})$ is sketched in \cite{burton10-dd} for the special
    case of a one-vertex triangulation.}
bound of $4^n$.
The proof is simple.
For any vector $\mathbf{x} \in \qproj$, the \emph{zero set} of $\mathbf{x}$
is defined as $\{k\,|\,x_k=0\}$; in other words, the set of indices at which
$\mathbf{x}$ has zero coordinates.  It is shown in \cite{burton10-dd} that
any vertex of $\qproj$ can be completely reconstructed from its zero set.
The quadrilateral constraints allow for at most four different
zero\,/\,non-zero patterns amongst the three quadrilateral coordinates
for each tetrahedron, restricting us to at most $4^n$ distinct zero
sets in total, and therefore at most $4^n$ admissible vertices of $\qproj$.

Two admissible vectors $\mathbf{u},\mathbf{v} \in \R^{7n}$ or
$\mathbf{u},\mathbf{v} \in \R^{3n}$ are said to be
\emph{compatible} if the quadrilateral constraints are satisfied by
both of them together.  That is,
for each $i$, at most one of the three quadrilateral coordinates
$q_{i,1},q_{i,2},q_{i,3}$ can be non-zero in \emph{either}
$\mathbf{u}$ or $\mathbf{v}$.

Some particular vectors in standard and quadrilateral coordinates are
worthy of note:
\begin{itemize}
    \item For each vertex $V$ of the triangulation $\tri$, the
    \emph{vertex link} of $V$ is the vector in $\R^{7n}$ describing a small
    embedded normal sphere surrounding $V$.  This normal surface
    consists of triangles
    only, and so the corresponding vector is zero on all quadrilateral
    coordinates.  If $\tri$ contains $v$ distinct vertices then there
    are $v$ corresponding vertex links, all of which are admissible
    and linearly independent.

    \item For each $i=1,\ldots,n$, the
    \emph{tetrahedral solution} $\tet{i} \in \R^{3n}$
    is the vector with $q_{i,1} = q_{i,2} = q_{i,3} = 1$ and all other
    quadrilateral coordinates equal to zero.
    The tetrahedral solutions were
    introduced by Kang and Rubinstein \cite{kang04-taut1} as part of a
    ``canonical basis'' for normal surface theory.
    They satisfy the quadrilateral matching equations
    (so $\tet{i} \in \qcone$), but they do not satisfy the
    quadrilateral constraints (so $\tet{i}$ is not admissible).
\end{itemize}

There is a natural relationship between standard and quadrilateral
coordinates.  We define the \emph{quadrilateral projection map}
$\pi\co\R^{7n}\to\R^{3n}$ as the map that deletes all $4n$
triangular coordinates $t_{i,j}$ and retains all $3n$ quadrilateral
coordinates $q_{i,j}$.  This map is linear, and it maps the admissible
points of $\scone$ \emph{onto} the admissible points of $\qcone$.
This map is not one-to-one, but the kernel is precisely the subspace of
$\R^{7n}$ generated by the (linearly independent) vertex links.
The relevant results are proven by Tollefson for integer vectors in
\cite{tollefson98-quadspace}; see \cite{burton09-convert} for
extensions into $\R^{7n}$ and $\R^{3n}$.

For points within the solution cones, the quadrilateral projection map
preserves admissibility and inadmissibility, and it preserves
compatibility and incompatibility.  That is,
$\mathbf{v} \in \scone$ is admissible if and only if
$\pi(\mathbf{v}) \in \qcone$ is admissible, and admissible vectors
$\mathbf{u},\mathbf{v} \in \scone$ are compatible if and only if
$\pi(\mathbf{u}),\pi(\mathbf{v}) \in \qcone$ are compatible.

We finish this overview of normal surface theory with an important
dimensional result.  This theorem is due Tillmann \cite{tillmann08-finite},
and extends earlier work of Kang and Rubinstein for non-closed manifolds
\cite{kang04-taut1}.

\begin{theorem}[Tillmann, 2008] \label{t-matching-dim}
    The solution space to the quadrilateral matching equations
    in $\R^{3n}$ has dimension precisely $2n$.
\end{theorem}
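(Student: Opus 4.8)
The plan is to work directly with the quadrilateral matching equations, of which there is one, say $\phi_e$, for each edge $e$ of $\tri$.  Write $V$ and $E$ for the numbers of vertices and edges of $\tri$.  Since $\tri$ is a closed $3$-manifold triangulation with $n$ tetrahedra it has exactly $2n$ triangular faces, so $V - E + 2n - n = 0$ and hence $E = V+n$.  Thus the $\phi_e$ form a homogeneous linear system of $V+n$ equations in $\R^{3n}$, and the theorem is the statement that this system has rank exactly $n$ --- equivalently, that there are precisely $V$ independent linear relations among the equations.  I would package this as exactness of the sequence
\[ 0 \longrightarrow W \longrightarrow \R^{3n} \stackrel{M}{\longrightarrow} \R^{E} \stackrel{\bdry}{\longrightarrow} \R^{V} \longrightarrow 0, \]
where $W$ is the solution space in question (so $W = \ker M$ by definition), $M$ is the matching operator $x \mapsto (\phi_e(x))_e$, and $\bdry$ sends the basis vector at an edge $e$ to $\sum_w m_{w,e}\,(\text{basis vector at } w)$ with $m_{w,e}$ the number of endpoints of $e$ lying at $w$.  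From exactness the alternating sum of dimensions gives $\dim W = 3n - E + V = 2n$ immediately.

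All the assertions needed for exactness are routine except one.  Surjectivity of $\bdry$ holds because the $1$-skeleton $G$ of $\tri$ is connected and never bipartite --- each tetrahedron forces either a loop in $G$ (two of its four vertices are identified in $\tri$) or a triangle in $G$ (they are not) --- so the unsigned incidence matrix $(m_{w,e})$ has rank $V$.  The identity $\bdry\circ M = 0$ says that for each vertex $w$ the combination $\sum_{e} m_{w,e}\,\phi_e$ vanishes identically; this can be checked coordinate by coordinate, using that the four edges of a tetrahedron met by a given quadrilateral type form a $4$-cycle along which that quadrilateral's coefficients in the $\phi_e$ alternate in sign, so that weighting edge $e$ by $m_{w,e}$ and summing around the cycle gives zero, the two cycle-edges at each vertex being consecutive and hence oppositely signed.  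Together these already give the easy inequality $\dim W \geq 2n$.

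The substantive point --- and where I expect the real obstacle --- is exactness at $\R^{E}$: that every element of $\ker\bdry$ lies in $\operatorname{im} M$, equivalently that the system $\{\phi_e\}$ has rank at least $n$, i.e.\ that there are no relations beyond the $V$ exhibited above.  This is genuinely a fact about manifolds; on an arbitrary complex of tetrahedra the analogous statement fails, and any proof must use that $\tri$ triangulates a closed connected $3$-manifold.  Concretely one wants either to realise a prescribed edge-weighting in $\ker\bdry$ by a formal normal-surface vector in $\R^{3n}$ (a Poincar\'e-duality-flavoured ``existence'' statement), or to exhibit $n$ of the $\phi_e$ that are linearly independent after choosing a convenient ordering of the tetrahedra and peeling them off one at a time.

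As a cross-check and possible alternative, one can instead route through standard coordinates: by the results of Tollefson quoted earlier, $\pi$ carries the standard matching solution space onto the quadrilateral one with kernel the $V$-dimensional span of the vertex links, so the theorem is equivalent to the standard matching equations having rank $5n - V$ in $\R^{7n}$ (solution space of dimension $2n + V$).  This version has the same shape --- a short list of visible relations coming from edges and vertices, plus a ``no further relations'' claim --- so it does not sidestep the obstacle, but the additional slack provided by the triangular coordinates may make the crucial rank estimate easier to pin down.
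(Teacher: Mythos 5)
The paper states this result as a citation to Tillmann (2008) and offers no proof of its own, so there is no internal argument to compare against. Your framework is otherwise sensible: with one quadrilateral matching equation $\phi_e$ per edge $e$ of $\tri$ and the Euler-characteristic count $E = V + n$, the claim does reduce to the matching matrix $M$ having rank exactly $n$, and the exact sequence you set up packages this cleanly. The pieces you actually argue --- the identity $\bdry\circ M = 0$, and surjectivity of $\bdry$ via non-bipartiteness of the $1$-skeleton --- are plausible and together give $\operatorname{rank}(M) \leq n$, hence $\dim W \geq 2n$.

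The genuine gap, which you flag yourself, is exactness at $\R^{E}$: you never show $\ker\bdry \subseteq \operatorname{im} M$, equivalently that $\operatorname{rank}(M) \geq n$, equivalently that $\dim W \leq 2n$. This is the substantive half of the theorem --- the only half that uses the closed-manifold hypothesis in an essential way --- and the two avenues you mention (realizing an arbitrary element of $\ker\bdry$ by a formal vector in $\R^{3n}$, or exhibiting $n$ independent equations by peeling off tetrahedra) are suggestions rather than arguments; neither is developed far enough to see that it closes. The detour through standard coordinates recasts the same bound as $\operatorname{rank} \geq 5n - V$ for the standard matching matrix but, as you concede, does not supply the missing step. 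As written, the proposal is a well-organized reduction of Tillmann's theorem to its essential difficulty, not a proof of it.
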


\subsection{Polytopes and polyhedra} \label{s-prelim-polytopes}

We follow Ziegler \cite{ziegler95} for our terminology:
\emph{polytopes} are always bounded
(like the projective solution spaces $\sproj$ and $\qproj$),
and \emph{polyhedra} may be bounded or unbounded
(like the solution cones $\scone$ and $\qcone$).
The reader is referred to \cite{ziegler95} for background
material on standard concepts such as faces, facets and supporting
hyperplanes.

In this paper we work with the
\emph{face lattice} of a polytope or polyhedron $P$,
which encodes all of the
combinatorial information about the facial structure of $P$.
Specifically, the face lattice is
the poset consisting of all faces of $P$ ordered by the
subface relation, and is denoted by $\latt(P)$.
See Figure~\ref{fig-lattice-cube} for an illustration in the
case where $P$ is the 3-dimensional cube.

\begin{figure}[htb]
    \centering
    \includegraphics[scale=0.7]{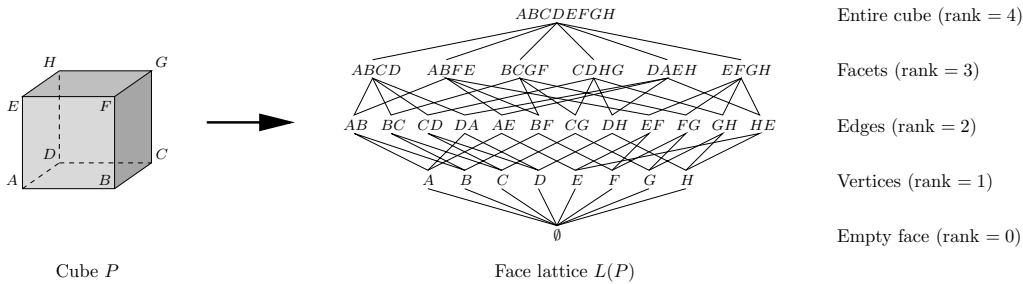}
    \caption{The face lattice of a cube}
    \label{fig-lattice-cube}
\end{figure}

We recount some key properties of the face lattice.
Any two faces $F,G \in \latt(P)$ have a unique
greatest lower bound in $\latt(P)$, called the \emph{meet} $F \wedge G$
(this corresponds to the intersection $F \cap G$), and also a unique
least upper bound in $\latt(P)$, called the \emph{join} $F \vee G$.
There is a unique minimal element of $\latt(P)$ (corresponding to the
empty face) and a unique maximal element of $\latt(P)$
(corresponding to $P$ itself).
Moreover, $\latt(P)$ is a \emph{graded lattice}:
it is equipped with a \emph{rank function}
$r \co \latt(P) \to \N$ defined by $r(F) = \dim F + 1$,
so that whenever $G$ \emph{covers} $F$ in the poset
(that is, $F < G$ and there is no $X$ for which $F < X < G$),
we have $r(G) = r(F) + 1$.
Once again we refer to Ziegler \cite{ziegler95} for
details.

For any polytope $F$, we define the \emph{cone over $F$} to be
$\cone{F} = \{ \lambda \mathbf{x}\,|\,\mathbf{x} \in F,\ \lambda \geq 0\}$.
As a special case, for the empty face $\emptyset$ we define
$\cone{\emptyset} = \{\mathbf{0}\}$.
It is clear that the solution cones $\scone$ and $\qcone$ are indeed
the cones over the projective solution spaces
$\sproj$ and $\qproj$, as the notation suggests.
The facial structures of polytopes and their cones are tightly related, as
described by the following well-known result:

\begin{lemma} \label{l-prelim-cone}
    Let $P$ be a $d$-dimensional polytope whose affine hull does not contain
    the origin.
    Then $\cone{P}$ is a $(d+1)$-dimensional polyhedron, and the cone map
    $F \mapsto \cone{F}$ is a bijection from the faces
    of $P$ to the non-empty faces of $\cone{P}$.
    This bijection maps $i$-faces of $P$ to $(i+1)$-faces of
    $\scone$ for all $i$.  Both the bijection and its inverse
    preserve subfaces; in other words,
    $\cone{F} \subseteq \cone{G}$ if and only if $F \subseteq G$.
\end{lemma}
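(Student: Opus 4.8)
The plan is to prove this by explicitly describing the cone map and its inverse, and verifying the four claimed properties (dimension count, bijectivity, dimension shift on faces, and subface preservation) in turn. The geometric picture is that $\cone{P}$ is the union of rays through the origin and points of $P$; since the affine hull $H$ of $P$ misses the origin, each nonzero point of $\cone{P}$ lies on a unique such ray, and scaling identifies the slice $\cone{P} \cap H$ with $P$ itself. I would first record this basic structural fact: $\cone{P} \cap H = P$, and $\cone{P}$ has dimension $d+1$ because it is spanned by $P$ (dimension $d$) together with one extra independent direction toward the origin, which is not in $H$.

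First I would set up the correspondence carefully. Given a face $F$ of $P$, the set $\cone{F}$ is a face of $\cone{P}$: if $H_0 = \{\mathbf{x} : \langle \mathbf{a},\mathbf{x}\rangle = c\}$ is a supporting hyperplane of $P$ with $F = P \cap H_0$, then after using the affine equation defining $H$ to homogenize (replace $c$ by a linear form vanishing appropriately), one obtains a linear functional that is nonnegative on $\cone{P}$ and whose zero set meets $\cone{P}$ exactly in $\cone{F}$. Conversely, given a nonempty face $G$ of $\cone{P}$, I would show $G = \cone{G \cap H}$ and that $G \cap H$ is a face of $P$: the supporting hyperplane for $G$ restricts, on the slice $H$, to a supporting hyperplane for $P$. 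This gives the two maps $F \mapsto \cone{F}$ and $G \mapsto G \cap H$; checking they are mutually inverse is then the identity $\cone{F} \cap H = F$ (immediate from $\cone{P}\cap H = P$) in one direction and $\cone{G \cap H} = G$ in the other, the latter following because every nonzero point of $G$ is a positive multiple of a point of $G \cap H$ since $G \subseteq \cone{P}$.

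For the dimension shift, an $i$-face $F$ of $P$ has affine hull of dimension $i$ lying inside $H$; coning it in adds exactly the one transverse direction toward the origin (not contained in $H$), so $\dim \cone{F} = i+1$. The empty face of $P$ maps to $\cone{\emptyset} = \{\mathbf{0}\}$, which is the minimal nonempty face of $\cone{P}$ of dimension $0 = (-1)+1$, consistent with the convention $\dim\emptyset = -1$. For subface preservation: if $F \subseteq G$ then trivially $\cone{F} \subseteq \cone{G}$; for the converse, intersect both sides with $H$ and use $\cone{F}\cap H = F$, $\cone{G} \cap H = G$.

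The main obstacle is the bookkeeping around the supporting-hyperplane homogenization — translating a supporting hyperplane of $P$ (an affine object in $H$) into a supporting halfspace of $\cone{P}$ (a linear object through the origin) and back. One has to be a little careful that the homogenized functional is genuinely nonnegative on all of $\cone{P}$ and not just on $P$, which uses that $P$ lies strictly on one side of the origin relative to $H$ (i.e. the affine functional defining $H$ has constant sign on the cone). Everything else is routine once the dictionary $\cone{P} \cap H = P$, $\cone{F}\cap H = F$ is in place, so I would state that dictionary as the first lemma-within-the-proof and lean on it repeatedly. Since this is a standard fact, I would also note that a reference to Ziegler suffices, but give the argument for completeness.
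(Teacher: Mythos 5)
The paper does not actually prove Lemma~\ref{l-prelim-cone}; it states it as a ``well-known result'' immediately before Theorem~\ref{t-ubt}, with Ziegler~\cite{ziegler95} as the background reference for polytope theory. There is therefore no paper proof to compare against, only your blind reconstruction.

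Your reconstruction is correct and is the standard textbook argument. The dictionary $\cone{P}\cap H = P$ (for $H$ the affine hull of $P$, which misses the origin) is the right organizing lemma, and the two maps $F\mapsto \cone{F}$ and $G\mapsto G\cap H$ are indeed mutually inverse bijections between faces of $P$ and non-empty faces of $\cone{P}$. The one spot you flag as a potential hitch --- that the homogenized functional $g-c\ell$ be nonnegative on all of $\cone{P}$ rather than just on $P$ --- is in fact automatic: it is nonnegative on $P$ by construction, and being linear (hence positively homogeneous) it is nonnegative on $\R_{\geq 0}\cdot P = \cone{P}$. Here $\ell$ is the linear functional with $\ell\equiv 1$ on $H$, which exists precisely because $\mathbf{0}\notin H$. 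With that observation closed, the dimension count ($\dim\cone{F}=\dim F + 1$, including the convention $\cone{\emptyset}=\{\mathbf{0}\}$ of dimension $0 = (-1)+1$) and the subface preservation (forward direction trivial, reverse by intersecting with $H$) go through exactly as you outline. Your proposal is a complete and correct substitute for the citation the paper relies on.
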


A celebrated milestone in polytope complexity theory was McMullen's
\emph{upper bound theorem}, proven in 1970 \cite{mcmullen70-ubt}.
In essence, this result places an upper bound on the number
of $i$-faces of a $d$-dimensional $k$-vertex polytope, for any
$i \leq d < k$.
This upper bound is tight, and equality is achieved in
the case of \emph{cyclic polytopes} (and more generally,
\emph{neighbourly simplicial polytopes}).
Taken in dual form, McMullen's theorem bounds the number of $i$-faces
of a $d$-dimensional polytope with $k$ facets.  In this paper
we use the dual form for the case $i=0$, which reduces to the following
result:

\begin{theorem}[McMullen, 1970] \label{t-ubt}
    For any integers $2 \leq d < k$,
    a $d$-dimensional polytope with precisely $k$ facets can have at most
    \begin{equation} \label{eqn-ubt}
    \binom{k - \lfloor \frac{d+1}{2} \rfloor}{k - d} +
    \binom{k - \lfloor \frac{d+2}{2} \rfloor}{k - d}
    \end{equation}
    vertices.\footnote{%
    The expression (\ref{eqn-ubt}) is the number of facets of the
    cyclic $d$-dimensional polytope with $k$ vertices; see a standard
    reference such as Gr\"unbaum \cite{grunbaum03} for details.}
\end{theorem}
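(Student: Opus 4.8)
This is the classical \emph{upper bound theorem}, and I would establish it by the standard route through shellings and $h$-vectors (equivalently, through Stanley's Cohen--Macaulay argument); here is the plan. The first step is to dualise: a $d$-polytope with $k$ facets and $v$ vertices is polar to a $d$-polytope with $k$ vertices and $v$ facets, so it is enough to bound the number of facets of a $d$-polytope $Q$ on $k$ vertices, and to show that the extremal value equals the facet count of the cyclic polytope $C_d(k)$, which is the expression~(\ref{eqn-ubt}). The second step is to reduce to the simplicial case: among all $d$-polytopes with $k$ vertices the number of facets is maximised by a simplicial one, since perturbing the vertices into general position subdivides each facet into simplices and loses no facets. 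So we may assume $Q$ is simplicial.

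For a simplicial $d$-polytope $Q$ on $k$ vertices, the boundary complex $\partial Q$ is a shellable simplicial $(d-1)$-sphere (Bruggesser--Mani, via a line shelling). Fixing a shelling $F_1,\dots,F_m$ of the facets, for each $F_j$ let $R(F_j)$ be its restriction face---the unique minimal face of $F_j$ not already contained in $F_1\cup\dots\cup F_{j-1}$---and put $h_i=\#\{\,j:|R(F_j)|=i\,\}$. Then $(h_0,\dots,h_d)$ is a vector of non-negative integers with $\sum_{i=0}^{d}h_i=f_{d-1}(Q)=m$, and reversing the shelling complements the restriction faces and yields the Dehn--Sommerville relations $h_i=h_{d-i}$. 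Hence
\[ f_{d-1}(Q)=\sum_{i=0}^{\lfloor d/2\rfloor}h_i+\sum_{i=0}^{\lceil d/2\rceil-1}h_i. \]

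The heart of the proof is the inequality $h_i\leq\binom{k-d-1+i}{i}$: McMullen derives it from a line shelling by injecting the facets counted by $h_i$ into the $i$-subsets of a fixed $(k-d-1+i)$-set, while Stanley instead identifies $h_i$ with the dimension of the degree-$i$ component of an Artinian reduction of the Stanley--Reisner ring of $\partial Q$---a quotient of a polynomial ring in $k-d$ variables, whose degree-$i$ piece has dimension at most $\binom{k-d-1+i}{i}$. Feeding this bound into the displayed identity and applying the hockey-stick identity $\sum_{i=0}^{m}\binom{k-d-1+i}{i}=\binom{k-d+m}{m}$ with $m=\lfloor d/2\rfloor$ and with $m=\lceil d/2\rceil-1$, then using $k-d+\lfloor d/2\rfloor=k-\lfloor\frac{d+1}{2}\rfloor$, $k-d+\lceil d/2\rceil-1=k-\lfloor\frac{d+2}{2}\rfloor$, and $\binom{a}{b}=\binom{a}{a-b}$, reproduces exactly the bound~(\ref{eqn-ubt}). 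For tightness, $C_d(k)$ is $\lfloor d/2\rfloor$-neighbourly, so $f_{i-1}=\binom{k}{i}$ and therefore $h_i=\binom{k-d-1+i}{i}$ for all $i\leq\lfloor d/2\rfloor$, making every inequality above an equality.

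I expect the genuine obstacle to be the $h$-vector estimate $h_i\leq\binom{k-d-1+i}{i}$ and the two structural facts it relies on---shellability of $\partial Q$ (so that the $h_i$ are well-defined and non-negative) and the Dehn--Sommerville symmetry---whereas the polar duality, the reduction to simplicial polytopes, and the closing binomial bookkeeping are routine.
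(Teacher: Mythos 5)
The paper does not prove this statement: it is quoted verbatim as McMullen's 1970 upper bound theorem (in the dual form, specialised to the vertex count), with a footnote pointing the reader to Gr\"unbaum for the formula counting the facets of the cyclic polytope. So there is no ``paper's own proof'' to compare against. Your sketch is, however, a correct and faithful rendition of the standard proof from the literature---polar duality, reduction to the simplicial case by vertex perturbation, Bruggesser--Mani line shellings, $h$-vectors with the Dehn--Sommerville symmetry $h_i = h_{d-i}$, the bound $h_i \leq \binom{k-d-1+i}{i}$ (by McMullen's counting or by Stanley's Cohen--Macaulay/Artinian reduction argument), and the hockey-stick identity to close; the floor/ceiling bookkeeping you carry out (using $\lfloor \frac{d+1}{2} \rfloor + \lfloor \frac{d}{2} \rfloor = d$ and $\lfloor \frac{d+2}{2} \rfloor = \lfloor \frac{d}{2} \rfloor + 1$) does recover expression~(\ref{eqn-ubt}) exactly, and the tightness claim via the $\lfloor d/2\rfloor$-neighbourliness of $C_d(k)$ is correct. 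The one place you should be cautious is your one-line gloss of McMullen's original combinatorial proof of the $h$-vector bound (the ``injection into $i$-subsets of a fixed set''): his argument is more delicate than a direct injection, using a recursive inequality relating $h_i$ of the sphere to the $h$-vectors of vertex links; Stanley's ring-theoretic route, which you also give, is the cleaner modern path and your description of it is accurate.
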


\section{Admissibility and the face lattice} \label{s-lattice}

In this section we explore the facial structures of the bounded polytopes
$\sproj$ and $\qproj$ (the standard and quadrilateral projective solution
spaces) and the tightly-related polyhedral cones $\scone$ and $\qcone$
(the standard and quadrilateral solution cones).
In particular we focus on \emph{admissible faces}, which are
faces along which the quadrilateral constraints are always satisfied.

We begin by showing that the admissible faces together contain all
admissible points (that is, all of the ``interesting'' points from the
viewpoint of normal surface theory).  Following this, we study the
layout of admissible faces within the larger face lattice of each
solution space, and we examine the relationships between
admissible faces and pairs of compatible points.
We finish the section by categorising \emph{maximal} admissible faces
in a variety of ways.

\begin{definition}[Admissible face] \label{d-adm-face}
    Let $F$ be a face of the standard projective solution space $\sproj$.
    Then $F$ is an \emph{admissible face} of $\sproj$ if every point in $F$
    satisfies the quadrilateral constraints.  We say that $F$ is a
    \emph{maximal admissible face} if $F$ is not a subface of some
    other admissible face of $\sproj$.
    The same definitions apply if we replace $\sproj$ with
    $\qproj$, $\scone$ or $\qcone$.
\end{definition}

There are always admissible points in $\sproj$ (for instance,
scaled multiples of the vertex links in the underlying triangulation).
Likewise, there are always admissible points in the cones $\scone$ and
$\qcone$ (the origin, for example).  However, it might be the case that the
quadrilateral solution space $\qproj$ has no admissible points at all,
in which case the empty face becomes the unique maximal admissible
face of $\qproj$.

In general, faces of a polytope are simpler to deal with than arbitrary
sets of points---they have convenient representations
(such as intersections with supporting hyperplanes) and useful
combinatorial properties (which we discuss shortly).
Our first result is to show that, in each solution space,
the admissible faces together hold all of the admissible points.
Jaco and Oertel make a similar remark in \cite{jaco84-haken},
at the point where they introduce the projective solution space.

\begin{lemma}
    Every admissible point within the standard projective
    solution space $\sproj$ belongs to some admissible face of $\sproj$.
    The same is true if we replace $\sproj$ with
    $\qproj$, $\scone$ or $\qcone$.
\end{lemma}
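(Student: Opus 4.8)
The plan is to take an admissible point $\mathbf{x}$ and produce the smallest face of the polytope (or cone) containing it, then show that this minimal face is itself admissible. First I would recall the standard fact from polytope theory: for any point $\mathbf{x}$ in a polytope $P$, there is a unique smallest face $F_{\mathbf{x}}$ of $P$ containing $\mathbf{x}$, and $\mathbf{x}$ lies in the relative interior of $F_{\mathbf{x}}$; equivalently, $F_{\mathbf{x}}$ is the intersection of all facets of $P$ whose supporting hyperplanes contain $\mathbf{x}$ (with $F_{\mathbf{x}} = P$ if $\mathbf{x}$ is interior). The same works verbatim for the pointed polyhedral cones $\scone$ and $\qcone$. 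So the real content is: if $\mathbf{x}$ is admissible, then every point of $F_{\mathbf{x}}$ is admissible.

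The key step is to identify the facets of $\sproj$ (resp.\ $\qproj$, $\scone$, $\qcone$) that matter here. Since these polytopes/cones are cut out of the projective hyperplane (resp.\ ambient space) by the matching equations together with the non-negativity constraints $x_k \geq 0$, each facet lies on some coordinate hyperplane $x_k = 0$. Hence the minimal face $F_{\mathbf{x}}$ is exactly the set of points in the polytope whose zero coordinates include every index $k$ with $x_k = 0$; in other words, every point $\mathbf{y} \in F_{\mathbf{x}}$ has $\{k : y_k = 0\} \supseteq \{k : x_k = 0\}$, so the \emph{support} of $\mathbf{y}$ is contained in the support of $\mathbf{x}$. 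Now I invoke the quadrilateral constraints directly: admissibility of $\mathbf{x}$ means that for each tetrahedron $i$, at most one of $q_{i,1}, q_{i,2}, q_{i,3}$ is nonzero in $\mathbf{x}$; since the nonzero quadrilateral coordinates of any $\mathbf{y} \in F_{\mathbf{x}}$ form a subset of those of $\mathbf{x}$, the same constraint holds for $\mathbf{y}$. Therefore $\mathbf{y}$ is admissible, so $F_{\mathbf{x}}$ is an admissible face containing $\mathbf{x}$, which is what we wanted.

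For the cone versions $\scone$ and $\qcone$ the argument is identical once one notes the facets are again coordinate hyperplanes $x_k = 0$ (the apex at the origin causes no trouble since $\mathbf{0}$ lies on every such facet and is trivially admissible). Alternatively, one could deduce the cone statements from the polytope statements via Lemma~\ref{l-prelim-cone}: the cone map is a subface-preserving bijection between faces of $\sproj$ and nonempty faces of $\scone$, and scaling preserves both zero sets (up to the apex) and the quadrilateral constraints, so admissible faces correspond under $\cone{(\cdot)}$; a nonzero admissible point $\mathbf{x} \in \scone$ projects to an admissible point $\mathbf{x}/\!\sum x_k \in \sproj$, lands in an admissible face there, and cones back up. I expect no genuine obstacle: the only point needing care is the claim that every facet of these solution spaces is a coordinate facet — this follows because the matching equations are equalities (they define the affine hull, not facets) and the only inequalities in the description are $x_k \geq 0$. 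I would state this explicitly, perhaps citing Ziegler~\cite{ziegler95} for the minimal-face characterisation, and keep the rest brief.
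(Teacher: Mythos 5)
Your proof is correct and takes essentially the same route as the paper: both arguments hinge on the fact that the only inequalities cutting out these solution spaces are the non-negativity constraints, so the minimal face through an admissible point is controlled by its zero set and cannot introduce new nonzero quadrilateral coordinates. The paper phrases this as a contradiction (find an inadmissible $\mathbf{q}$ in the minimal face, exhibit a coordinate hyperplane that separates it, contradict minimality), whereas you characterise the minimal face explicitly as $\{\mathbf{y} \in P : Z(\mathbf{y}) \supseteq Z(\mathbf{x})\}$ and read admissibility off directly — a cosmetic difference only, though your version requires the (correct, standard) observation that this set is indeed the minimal face, which needs slightly more care to state than your facet-intersection sketch suggests, since not every coordinate hyperplane meeting $P$ does so in a facet.
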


\begin{proof}
    We work with $\sproj$ only; the arguments for $\qproj$,
    $\scone$ and $\qcone$ are identical.
    Let $\mathbf{p} \in \sproj$ be any admissible point, and let
    $F$ be the minimal-dimensional face of $\sproj$ containing $\mathbf{p}$
    (we can construct $F$ by taking the intersection of all faces
    containing $\mathbf{p}$).

    We claim that $F$ is an admissible face.  If not, let $\mathbf{q} \in F$
    be some inadmissible point in $F$.  Because $\mathbf{p}$ is
    admissible but $\mathbf{q}$ is not, there must be some coordinate
    position $i$ for which $p_i = 0$ and $q_i > 0$.

    Consider now the hyperplane $H = \{\mathbf{x} \in \R^{7n}\,|\,x_i=0\}$.
    It is clear that $H$ is a supporting hyperplane for $\sproj$ and
    that $\mathbf{p} \in H$ but $\mathbf{q} \notin H$.
    It follows that $F \cap H$ is a strict subface of $F$ containing our
    original point $\mathbf{p}$, contradicting the minimality of $F$.
\end{proof}

Because polyhedra have finitely many faces, every admissible face must
belong to some maximal admissible face.  This gives us the following
immediate corollary:

\begin{corollary} \label{c-face-union}
    The set of all admissible points in $\sproj$ is precisely the union of all
    maximal admissible faces of $\sproj$.
    The same is true if we replace $\sproj$ with
    $\qproj$, $\scone$ or $\qcone$.
\end{corollary}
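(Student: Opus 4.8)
The plan is to derive this directly from the preceding Lemma together with the finiteness of the face lattice. The Lemma tells us that every admissible point of $\sproj$ lies in \emph{some} admissible face; a maximal admissible face, by definition, is just an admissible face that is not a proper subface of another admissible face. So the real content is the observation that every admissible face is contained in a maximal one, which follows because $\sproj$ is a polytope and hence has only finitely many faces: starting from any admissible face $F$, if $F$ is not maximal we pass to a strictly larger admissible face, and this ascending chain in the finite poset $\latt(\sproj)$ must terminate at a maximal admissible face. Hence every admissible point lies in a maximal admissible face, which gives containment of the set of admissible points in the union of maximal admissible faces.

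For the reverse containment, I would note that every point of a maximal admissible face is by definition a point at which the quadrilateral constraints hold, i.e.\ an admissible point (the non-negativity constraints and matching equations are automatic since we are inside $\sproj$, respectively $\qproj$, $\scone$, $\qcone$). Thus the union of all maximal admissible faces is contained in the set of admissible points, and combining the two inclusions gives equality.

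Concretely the write-up would be: let $S$ be the set of admissible points of $\sproj$ and let $U$ be the union of its maximal admissible faces. Every point of $U$ is admissible, so $U \subseteq S$. Conversely, given $\mathbf{p} \in S$, the Lemma produces an admissible face $F \ni \mathbf{p}$; since $\latt(\sproj)$ is finite, $F$ is contained in some maximal admissible face, so $\mathbf{p} \in U$. Therefore $S = U$. The identical argument applies verbatim with $\sproj$ replaced by $\qproj$, $\scone$ or $\qcone$, since the Lemma was stated for all four and finiteness of the face lattice holds for polyhedra as well as polytopes.

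There is essentially no obstacle here — this is a bookkeeping corollary. The only point requiring a word of care is the termination-of-chains argument for the existence of a maximal admissible face above a given admissible face; for bounded polytopes this is immediate from finiteness, and for the cones $\scone$, $\qcone$ one should remark that although these are unbounded, they still have finitely many faces, so the same argument goes through. I would state this parenthetically rather than belabour it.
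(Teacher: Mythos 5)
Your argument is correct and matches the paper's reasoning exactly: the paper likewise deduces the corollary from the preceding lemma together with the observation that, since polyhedra have finitely many faces, every admissible face lies in some maximal admissible face. Your only addition is spelling out the trivial reverse inclusion, which the paper leaves implicit.
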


\begin{remarks}
    It should be noted that this union of maximal admissible faces
    is generally not convex.  This means that we cannot (easily) apply
    the theory of convex polytopes to the ``admissible region'' within
    $\sproj$, which causes difficulties both for theoretical analysis (as in
    this paper) and for practical algorithms (see \cite{burton10-dd} for a
    detailed discussion).
    The maximal admissible faces are the largest admissible regions
    that \emph{can} be described as convex polytopes, and our strategy in
    Sections~\ref{s-quad}
    and~\ref{s-std} of this paper is to work with each maximal admissible
    face one at a time.

    It should also be noted that there may be faces of $\sproj$ that
    are \emph{not} admissible faces, but which contain admissible points.
    In particular, $\sproj$ itself is such a face.  We also see this
    in lower dimensions; for instance, $\sproj$ might have a
    non-admissible edge whose endpoints are both admissible vertices.
\end{remarks}

We turn our attention now to the face lattices of the various solution
spaces, and the structures formed by the admissible faces within them.

\begin{definition}[Admissible face semilattice]
    Let $P$ represent one of the solution spaces $\sproj$, $\qproj$,
    $\scone$ or $\qcone$.  The \emph{admissible face semilattice} of $P$,
    denoted $\lattadm(P)$, is the poset consisting of all
    admissible faces of $P$, ordered again by the subface relation.
\end{definition}

The use of the word ``semilattice'' will be justified shortly.
In the meantime,
it is clear that the admissible face semilattice $\lattadm(P)$ is a
substructure of the face lattice $\latt(P)$.
Figure~\ref{fig-lattice-adm} illustrates this for the quadrilateral
projective solution space, showing both $\latt(\qproj)$ and
$\lattadm(\qproj)$ for a three-tetrahedron triangulation\footnote{%
    The precise triangulation is described by the dehydration string
    \texttt{dafbcccxaqh}, using the notation of Callahan, Hildebrand and Weeks
    \cite{callahan99-cuspedcensus}.}
of the product space $\R P^2 \times S^1$.
% Memo to me: The triangulation is N(3,1), or #73 from the 3-tetrahedron
% census.
The full face lattice is shown in grey, and the
admissible face semilattice is highlighted in black.
The admissible face semilattice contains
one maximal admissible edge, two maximal admissible vertices,
and no other maximal admissible faces at all.

\begin{figure}[htb]
    \centering
    \includegraphics[scale=0.8]{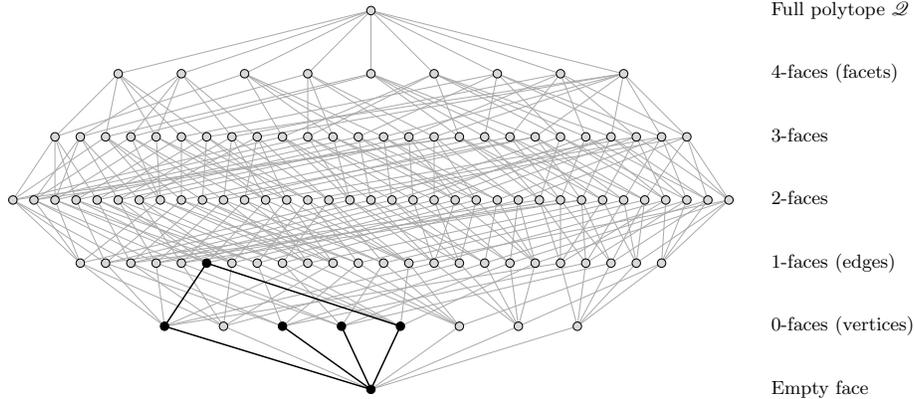}
    \caption{The face lattice and admissible face semilattice
        for an example triangulation}
    \label{fig-lattice-adm}
\end{figure}

One striking observation from Figure~\ref{fig-lattice-adm} is how few
admissible faces there are in comparison to the size of the full face lattice.
This is a pervasive phenomenon in normal surface theory,
and it highlights the importance of incorporating admissibility into
enumeration algorithms and complexity bounds.

The admissible face semilattice retains several key properties of the
face lattice, which we outline in the following lemma.
For this result we use interval notation: in a poset $S$
with elements $x \leq y$, the notation $[x,y]$ denotes the
\emph{interval} $\{w \in S\,|\,x \leq w \leq y\}$.

\begin{lemma} \label{l-intervals}
    The admissible face semilattice $\lattadm(\sproj)$ is the
    union of all intervals $[\emptyset,F]$ in the face lattice $\latt(\sproj)$,
    % where $\emptyset$ is the empty face and
    where $F$ ranges over all maximal admissible faces of $\sproj$.

    Every pair of faces $F,G \in \lattadm(\sproj)$ has a
    meet (i.e., a unique greatest lower bound),
    and $\lattadm(\sproj)$ has a unique minimal element (the empty face).
    The rank function of the face lattice $r\co \latt(\sproj) \to \N$
    maintains its covering property when restricted to $\lattadm(\sproj)$;
    that is, whenever $G$ covers $F$ in the poset $\lattadm(\sproj)$,
    we have $r(G) = r(F) + 1$.

    All of these results remain true if we replace $\sproj$ with
    $\qproj$, $\scone$ or $\qcone$.
\end{lemma}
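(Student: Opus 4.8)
The plan is to reduce everything to one elementary observation and then read off each assertion: \emph{every subface of an admissible face is itself admissible}. This is immediate from Definition~\ref{d-adm-face}, since if $G \subseteq F$ then each point of $G$ is a point of $F$, so if every point of $F$ satisfies the quadrilateral constraints then so does every point of $G$. I would record this first, together with two trivialities: the empty face is (vacuously) admissible, and, because $\sproj$ has only finitely many faces, every admissible face is contained in at least one maximal admissible face — keep passing to a strictly larger admissible face until none exists (this is the observation already used just before Corollary~\ref{c-face-union}).

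For the interval description I would check both inclusions. If $G \in \lattadm(\sproj)$, pick a maximal admissible $F \supseteq G$; then $G \in [\emptyset, F]$. Conversely, if $G \in [\emptyset,F]$ for a maximal admissible $F$, then $G$ is a subface of $F$, hence admissible by the opening observation, so $G \in \lattadm(\sproj)$. For meets: the meet of $F,G$ in $\latt(\sproj)$ is the intersection $F \cap G$, which is a subface of $F$ and hence admissible, so it lies in $\lattadm(\sproj)$; since $\lattadm(\sproj)$ is an induced sub-poset of $\latt(\sproj)$, this intersection remains the greatest lower bound there. This is precisely what makes $\lattadm(\sproj)$ a meet-semilattice and justifies the terminology — joins can fail, because the join in $\latt(\sproj)$ of two admissible faces need not be admissible (a non-admissible edge with two admissible endpoints, as in the Remarks above). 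The empty face is admissible and is a subface of everything, so it is the unique minimal element.

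For the covering property, suppose $G$ covers $F$ in $\lattadm(\sproj)$; then $F < G$ in $\latt(\sproj)$ as well. Since $\latt(\sproj)$ is graded with rank function $r(X) = \dim X + 1$, there is a saturated chain $F = X_0 < X_1 < \cdots < X_m = G$ in $\latt(\sproj)$ with each $X_{j+1}$ covering $X_j$ and $m = r(G) - r(F)$. Each $X_j$ is a subface of $G$, hence admissible, hence an element of $\lattadm(\sproj)$. If $m \geq 2$ then $X_1$ would satisfy $F < X_1 < G$ inside $\lattadm(\sproj)$, contradicting that $G$ covers $F$ there; so $m = 1$ and $r(G) = r(F) + 1$.

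Finally I would note that nothing above used any feature specific to $\sproj$: it relied only on admissibility being a condition on coordinates of points, on the finiteness of the face set, and on $\latt(P)$ being a graded lattice with $r(X) = \dim X + 1$ whose meet is intersection — all valid for $\qproj$, $\scone$ and $\qcone$ by Section~\ref{s-prelim-polytopes} and Lemma~\ref{l-prelim-cone}. Hence the identical argument settles all four cases. I do not anticipate a real obstacle; the only place requiring a moment's care is the covering claim, where one must invoke gradedness of the \emph{ambient} face lattice rather than attempting to compare dimensions directly within $\lattadm(P)$.
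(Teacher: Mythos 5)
Your proof is correct and takes essentially the same route as the paper: the paper's own proof is just a terse statement of the same two ingredients you spell out (the union-of-intervals description from the maximality/finiteness argument before Corollary~\ref{c-face-union}, and the observation that every subface of an admissible face is admissible, from which the meet-closure and the covering property are read off against the graded ambient lattice $\latt(\sproj)$). Your careful handling of the covering claim via a saturated chain in the ambient lattice is exactly the right way to make the paper's ``follows from the properties of the face lattice'' precise.
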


\begin{proof}
    The fact that $\lattadm(\sproj)$ is the union of intervals
    $[\emptyset,F]$ for all maximal admissible faces $F$ follows immediately
    from Corollary~\ref{c-face-union}.
    The remaining observations follow from the properties of
    the face lattice $\latt(\sproj)$ and the observation that,
    for any face $F \in \lattadm(\sproj)$, all subfaces of $F$ are also
    in $\lattadm(\sproj)$.
    The arguments are identical for $\qproj$, $\scone$ and $\qcone$.
\end{proof}

The poset $\lattadm(\sproj)$ is generally not a lattice, since
joins $F \vee G$ need not exist.
Because meets exist however, $\lattadm(\sproj)$ is a
\emph{meet-semilattice} (and likewise for
$\qproj$, $\scone$ and $\qcone$); see \cite{stanley97-vol1} for details.

Throughout this section we work in all four solution spaces
$\sproj$, $\qproj$, $\scone$ and $\qcone$.  However, the cones
$\scone$ and $\qcone$ are precisely
the cones over the projective solution spaces $\sproj$ and $\qproj$,
and so their facial structures are tightly related.
The following result formalises this relationship, allowing us to
transport results between different spaces where necessary.

\begin{lemma} \label{l-proj-cone}
    Consider the cone map $F \mapsto \cone{F}$ from faces of
    $\sproj$ into the cone $\scone$.  This cone map satisfies
    all of the properties described in Lemma~\ref{l-prelim-cone};
    in particular, $F \mapsto \cone{F}$ is a bijection between the faces of
    $\sproj$ and the non-empty faces of $\scone$.

    Moreover, this bijection and its inverse both preserve admissibility.
    In other words, $\cone{F}$ is an admissible face of $\scone$ if and only
    if $F$ is an admissible face of $\sproj$.
    This means that the cone map is also a bijection between
    the admissible faces of $\sproj$ and the non-empty admissible faces
    of $\scone$, and a bijection between the maximal admissible faces of
    $\sproj$ and the maximal admissible faces of $\scone$.

    All of these results remain true if we replace $\sproj$ and $\scone$ with
    $\qproj$ and $\qcone$ respectively.
\end{lemma}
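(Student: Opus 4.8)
The plan is to obtain everything from Lemma~\ref{l-prelim-cone} together with one structural observation: the quadrilateral constraints are preserved under positive scaling, and the cone $\scone$ is built from $\sproj$ precisely by positive scaling. Concretely, since the affine hull of $\sproj$ (the projective hyperplane $\sum t_{i,j} + \sum q_{i,j} = 1$) does not contain the origin, Lemma~\ref{l-prelim-cone} applies verbatim and gives the first paragraph of the statement for free: $F \mapsto \cone{F}$ is a subface-preserving bijection from faces of $\sproj$ to non-empty faces of $\scone$, sending $i$-faces to $(i+1)$-faces, and similarly for $\qproj$ and $\qcone$. Nothing new is needed there.

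The content is the claim that this bijection preserves admissibility in both directions. First I would record the elementary fact that for any $\lambda > 0$, a vector $\mathbf{x} \in \R^{7n}$ satisfies the quadrilateral constraints if and only if $\lambda\mathbf{x}$ does --- this is immediate because the constraints only refer to which of $q_{i,1}, q_{i,2}, q_{i,3}$ are zero versus non-zero, and scaling by a positive factor does not change the zero set. (The origin itself vacuously satisfies the constraints, so $\cone{\emptyset} = \{\mathbf{0}\}$ causes no trouble, but since we only match $F$ with non-empty $\cone{F}$ this case does not even arise in the bijection.) Now suppose $F$ is an admissible face of $\sproj$. Every point of $\cone{F}$ has the form $\lambda\mathbf{x}$ with $\mathbf{x} \in F$ and $\lambda \geq 0$; if $\lambda > 0$ then $\lambda\mathbf{x}$ satisfies the constraints because $\mathbf{x}$ does, and if $\lambda = 0$ the point is the origin, which satisfies them vacuously. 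Hence $\cone{F}$ is admissible. Conversely, if $\cone{F}$ is an admissible face of $\scone$, then in particular every point of $F \subseteq \cone{F}$ satisfies the constraints, so $F$ is admissible. This proves the ``if and only if'' for admissibility.

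Given that, the remaining assertions are bookkeeping. Since the cone map is a subface-preserving bijection on all faces and restricts to a bijection between the admissible faces of $\sproj$ and the non-empty admissible faces of $\scone$, it carries maximal elements to maximal elements in both directions: if $F$ is a maximal admissible face of $\sproj$ and $\cone{F} \subsetneq G'$ for some admissible face $G'$ of $\scone$, then $G' = \cone{G}$ for a unique face $G$ of $\sproj$ with $F \subsetneq G$ (using that the inverse map preserves strict containment), and $G$ is admissible since $\cone{G}$ is, contradicting maximality of $F$; the reverse direction is symmetric, noting that a maximal admissible face of $\scone$ is automatically non-empty because $\scone$ always contains admissible points other than the origin (e.g.\ scaled vertex links, which are non-trivial). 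Finally, the entire argument uses only that admissibility is a scaling-invariant condition on coordinates and that $\qproj$ likewise has affine hull $\sum q_{i,j} = 1$ avoiding the origin, so replacing $\sproj, \scone$ by $\qproj, \qcone$ changes nothing. I do not anticipate a genuine obstacle here; the only point requiring a moment's care is making sure the empty-face / origin boundary case is handled cleanly and that maximality transfers correctly through the bijection on the restricted poset rather than the full face lattice.
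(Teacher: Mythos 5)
Your proof is correct and takes essentially the same approach as the paper: invoke Lemma~\ref{l-prelim-cone} via the origin-avoiding affine hull, observe that the quadrilateral constraints are invariant under positive scaling so admissibility transfers in both directions, and let the subface-preserving bijection handle maximality. One inessential quibble: your parenthetical justification that a maximal admissible face of the cone is non-empty because of ``scaled vertex links'' does not transfer to $\qcone$ (where vertex links project to $\mathbf{0}$); the cleaner observation is simply that $\{\mathbf{0}\}$ itself is always an admissible face dominating the empty face, so the empty face is never maximal.
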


\begin{proof}
    We are able to use Lemma~\ref{l-prelim-cone} because
    $\sproj$ lies entirely within the projective hyperplane
    $\sum x_i = 1$, and so the origin lies outside the affine hull of $\sproj$.
    It is simple to show that the bijection $F \mapsto \cone{F}$
    and its inverse preserve
    admissibility: any inadmissible point in $F$ is also an
    inadmissible point in $\cone{F}$, and if $\mathbf{x}$ is an
    inadmissible point in $\cone{F}$ then $\mathbf{x}/\sum x_i$ is an
    inadmissible point in $F$.
    The remaining claims follow immediately from Lemma~\ref{l-prelim-cone}.
\end{proof}

One consequence of Lemma~\ref{l-prelim-cone} is that the face lattice
of $\scone$ is ``almost isomorphic'' to the face lattice of $\sproj$;
the only difference is that $\latt(\scone)$ contains one new element
(the empty face) that is dominated by all others.
What Lemma~\ref{l-proj-cone} shows is that the same relationship exists
between the admissible face semilattices.

From here we turn our attention to admissible faces and compatible pairs
of points.  Throughout the remainder of this section we explore the
relationships between these two concepts, culminating in
Corollary~\ref{c-maxadm-vertices} which categorises maximal admissible
faces in terms of pairwise compatible points and vertices.

\begin{lemma} \label{l-adm-face-to-compatible}
    Let $F$ be an admissible face of $\sproj$, $\qproj$,
    $\scone$ or $\qcone$.  Then any two points in $F$ are compatible.
\end{lemma}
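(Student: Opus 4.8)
The plan is to reduce the statement to the definition of admissibility via the observation that a face $F$ of a polytope sits inside the non-negative orthant, so every point of $F$ has the same or a larger zero set than the ``interior'' behaviour of $F$ dictates. Concretely, I first note that since $F$ is an admissible face, \emph{every} point of $F$ individually satisfies the quadrilateral constraints. What remains is to rule out the possibility that two points $\mathbf{u},\mathbf{v}\in F$ are each individually fine but jointly violate the constraints --- that is, that for some tetrahedron $i$ we have (say) $u_{i,1}>0$ and $v_{i,2}>0$ with $u_{i,1}v_{i,2}$ forcing two distinct quadrilateral types to be simultaneously nonzero across the pair.

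The key step is to produce from such a bad pair a \emph{single} inadmissible point of $F$, contradicting the hypothesis that $F$ is an admissible face. For this I would take the midpoint $\mathbf{w} = \tfrac12(\mathbf{u}+\mathbf{v})$ (or more generally any convex combination $\lambda\mathbf{u}+(1-\lambda)\mathbf{v}$ with $0<\lambda<1$). Since $F$ is convex, $\mathbf{w}\in F$. Now because all coordinates are non-negative, a coordinate of $\mathbf{w}$ is zero if and only if the corresponding coordinates of both $\mathbf{u}$ and $\mathbf{v}$ are zero; in particular $w_{i,1}\geq \tfrac12 u_{i,1}>0$ and $w_{i,2}\geq \tfrac12 v_{i,2}>0$, so $\mathbf{w}$ has two nonzero quadrilateral coordinates in tetrahedron $i$ and hence violates the quadrilateral constraints. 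Thus $\mathbf{w}$ is an inadmissible point of $F$, contradicting Definition~\ref{d-adm-face}.

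I would present this uniformly for all four spaces $\sproj$, $\qproj$, $\scone$, $\qcone$, since the only properties used are convexity of faces (true for faces of any polytope or polyhedron) and non-negativity of all coordinates (built into the definition of each solution cone and projective solution space). No genuine obstacle is anticipated here: the argument is a one-line convexity-plus-non-negativity observation. The only point requiring a moment's care is the direction of the equivalence ``a coordinate of a convex combination vanishes iff it vanishes in both summands'', which relies essentially on the non-negativity of the orthant --- without it, cancellation could occur and the argument would fail. I would state that observation explicitly before invoking it.
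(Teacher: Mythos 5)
Your proof is correct and is essentially identical to the paper's: both take the midpoint of a putative incompatible pair, use non-negativity of the coordinates to show the midpoint has two nonzero quadrilateral coordinates in the same tetrahedron, and contradict the admissibility of $F$. Your slightly more careful spelling out of the non-negativity observation is a fine stylistic choice but not a different argument.
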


\begin{proof}
    Suppose that $F$ contains two incompatible points
    $\mathbf{x},\mathbf{y}$.
    Because $\mathbf{x}$ and $\mathbf{y}$ are admissible but incompatible,
    their sum $\mathbf{x}+\mathbf{y}$ must have non-zero
    entries in the coordinate positions
    for two distinct quadrilateral types
    within the same tetrahedron.  Therefore the midpoint
    $\mathbf{z} = (\mathbf{x}+\mathbf{y})/2$ is inadmissible,
    contradicting the admissibility of the face $F$.
\end{proof}

From this result we obtain a simple but useful bound on the complexity of
admissible faces within our solution spaces.  Note that
by a ``facet'' of some $i$-face $F$, we mean an $(i-1)$-dimensional
subface of $F$.

\begin{corollary} \label{c-adm-facets}
    Every admissible face of $\qproj$ or $\qcone$ has at most $n$ facets,
    and every admissible face of $\sproj$ or $\scone$ has at most $5n$ facets.
\end{corollary}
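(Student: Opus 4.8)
The plan is to combine Lemma~\ref{l-adm-face-to-compatible} with the observation that each of $\sproj$, $\qproj$, $\scone$, $\qcone$ is cut out by non-negativity constraints together with purely linear (equality) constraints, so that every facet is carved out by one of the coordinate hyperplanes $\{x_k=0\}$. The crucial preliminary step is to show that an admissible face is ``thin'': if $F$ is an admissible face of any of the four spaces, then for each tetrahedron $i$ at least two of the three quadrilateral coordinates $q_{i,1},q_{i,2},q_{i,3}$ vanish identically on $F$. Indeed, if $q_{i,j_1}$ were non-zero at some $\mathbf{u}\in F$ and $q_{i,j_2}$ non-zero at some $\mathbf{v}\in F$ with $j_1\ne j_2$, then $\mathbf{u}$ and $\mathbf{v}$ would be incompatible, contradicting Lemma~\ref{l-adm-face-to-compatible}. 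It follows that at least $2n$ of the coordinate functions are identically zero on $F$; that is, the ``zero set'' $S=\{k\,|\,x_k=0\ \text{throughout}\ F\}$ satisfies $|S|\ge 2n$.

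The remaining step is routine polyhedral bookkeeping. Write $P$ for any of the four spaces and $N$ for its ambient dimension, so $N=3n$ for $\qproj,\qcone$ and $N=7n$ for $\sproj,\scone$. Each such $P$ has the form $\{\mathbf{x}\ge\mathbf{0}\}\cap A$ for an affine subspace $A$ (the matching equations, together with the projective hyperplane in the bounded cases), so its only facet-defining inequalities are the $x_k\ge 0$; hence every face of $P$ is an intersection of coordinate hyperplanes with $P$, and the face $F$ with zero set $S$ equals $P\cap\{x_k=0\,:\,k\in S\}$. Every facet of $F$ then has the form $F\cap\{x_k=0\}$ for some index $k\notin S$, and distinct facets must arise from distinct such indices: if two facets both lay in $\{x_k=0\}$, they would both be contained in the proper subface $F\cap\{x_k=0\}$, which has dimension at most $\dim F-1$, forcing them to coincide. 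Therefore $F$ has at most $N-|S|$ facets.

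Combining the two steps, an admissible face of $\qproj$ or $\qcone$ has at most $3n-2n=n$ facets, and an admissible face of $\sproj$ or $\scone$ has at most $7n-2n=5n$ facets. (The two cone cases could alternatively be deduced from the bounded cases through Lemma~\ref{l-proj-cone}, under which the facets of a non-empty face $F$ of $\sproj$ or $\qproj$ correspond bijectively to the facets of $\cone{F}$.) I expect no genuine difficulty here; the only point requiring care is the facet/coordinate-hyperplane correspondence in the second step, and in particular the claim that distinct facets of $F$ lie in distinct hyperplanes $\{x_k=0\}$.
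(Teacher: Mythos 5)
Your proof is correct and follows essentially the same route as the paper: the first step (invoking Lemma~\ref{l-adm-face-to-compatible} to force at least $2n$ quadrilateral coordinates to vanish identically on the face) is identical, and the second step simply re-derives, for polyhedra of the form $\{\mathbf{x}\ge\mathbf{0}\}\cap A$, the standard fact that the paper cites from Ziegler, namely that the number of facets is bounded by the number of inequalities in any representation. The one point you flagged as needing care (that each hyperplane $\{x_k=0\}$ with $k\notin S$ can contain at most one facet of $F$) is indeed fine, since $F\cap\{x_k=0\}$ is a proper face of $F$ and any facet of $F$ contained in it must equal it by the dimension count; the only cosmetic difference from the paper is that it proves the cone cases first and transfers to $\qproj$ and $\sproj$ via Lemma~\ref{l-proj-cone}, whereas you treat all four spaces uniformly.
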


\begin{proof}
    Let $F$ be an admissible face of $\qcone$.  Because any two points
    in $F$ are compatible (Lemma~\ref{l-adm-face-to-compatible}),
    it follows that for each tetrahedron of the underlying triangulation,
    two of the three corresponding
    quadrilateral coordinates are \emph{simultaneously} zero for all
    points in $F$.  In other words, $F$ lies within
    $2n$ distinct hyperplanes of the form $x_i = 0$ (and possibly more).

    Recall that $\qcone$ is the intersection of $\R^{3n}$ with the
    hyperplanes defined by the matching equations and the $3n$
    half-spaces defined by the inequalities $x_i \geq 0$.
    Because $F$ is the intersection of $\qcone$ with a supporting hyperplane,
    the argument above shows that $F$ is precisely the
    intersection of $\R^{3n}$ with some number of hyperplanes and
    at most $3n-2n=n$ half-spaces of the form $x_i \geq 0$.

    It is a standard result of polytope theory \cite{ziegler95} that the
    number of half-spaces in any representation of a polytope is at least the
    number of facets, whereupon the number of facets of $F$ can be at
    most $n$.

    The corresponding result in $\qproj$ is immediate from
    Lemma~\ref{l-proj-cone}, and the corresponding arguments in
    $\scone,\sproj \subseteq \R^{7n}$ show that $F$ has at most
    $7n-2n=5n$ facets instead.
\end{proof}

In Lemma~\ref{l-adm-face-to-compatible} we showed that every admissible
face must be filled with pairwise compatible points.  In the following
result we turn this around, showing that any set of pairwise compatible
points must belong to some maximal admissible face.

\begin{lemma} \label{l-adm-compatible-to-face}
    Let $X \subseteq \sproj$ be any set of admissible points in which every
    two points are compatible.  Then there is some maximal admissible face $F$
    of $\sproj$ for which $X \subseteq F$.
    The same is true if we replace $\sproj$ with
    $\qproj$, $\scone$ or $\qcone$.
\end{lemma}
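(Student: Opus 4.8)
The plan is to take the convex hull of $X$, show that this hull is itself an admissible face, and then invoke the fact that every admissible face sits inside a maximal one. First I would set $C = \conv(X)$; since $X$ is a set of pairwise compatible admissible points, every point of $C$ is a convex combination of finitely many pairwise compatible admissible points, and I claim each such combination is again admissible. Non-negativity is clear (convex combinations of non-negative vectors), and the matching equations are linear and homogeneous, so they are preserved. For the quadrilateral constraints, the key point is that if $\mathbf{z} = \sum \lambda_k \mathbf{x}_k$ with $\lambda_k > 0$, then a coordinate $q_{i,j}$ of $\mathbf{z}$ is non-zero if and only if some $\mathbf{x}_k$ has $q_{i,j} \neq 0$; since the $\mathbf{x}_k$ are pairwise compatible, at most one value of $j$ occurs across all of them for each tetrahedron $i$, so $\mathbf{z}$ satisfies the quadrilateral constraints. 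Hence every point of $C$ is admissible.

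Next I would pass from the set $C$ to an actual face of $\sproj$. Following the argument already used in the lemma preceding Corollary~\ref{c-face-union}, let $G$ be the minimal-dimensional face of $\sproj$ that contains $C$ (equivalently, the intersection of all faces of $\sproj$ containing $C$). I claim $G$ is an admissible face. Suppose not, so $G$ contains an inadmissible point $\mathbf{q}$. Pick any admissible point $\mathbf{p} \in C \subseteq G$ (for instance any element of $X$). Exactly as in that earlier argument, there is a coordinate index $i$ with $p_i = 0$ and $q_i > 0$; the hyperplane $H = \{\mathbf{x} \mid x_i = 0\}$ is a supporting hyperplane of $\sproj$ with $\mathbf{p} \in H$ but $\mathbf{q} \notin H$, so $G \cap H$ is a strict subface of $G$ still containing $\mathbf{p}$. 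This alone does not contradict minimality of $G$ unless $G \cap H$ also contains all of $C$ — so the cleaner route is to run this argument simultaneously for every point of $X$: for each $\mathbf{x} \in X$ that has $x_i = 0$ we keep it, but we must ensure $C \subseteq H$. The fix is to choose the minimal face $G$ containing $C$ and note that since $G$ is a face, it is cut out by some supporting hyperplanes; if $C$ (hence $G$) met the interior of the half-space $x_i \geq 0$ at some point while also containing a point with $q_i > 0$, minimality would be violated by intersecting with $x_i = 0$. Concretely: the set $I = \{ i \mid x_i = 0 \text{ for all } \mathbf{x} \in C\}$ of coordinates that vanish identically on $C$; then $C$ lies in $\bigcap_{i \in I}\{x_i = 0\}$, and I claim this intersection (with $\sproj$) is already admissible, because any inadmissible point of it would have $q_{i,j}, q_{i,j'} \neq 0$ for some tetrahedron with $j \neq j'$, forcing $i,j$-type and $i,j'$-type coordinates both outside $I$, hence both non-zero on some points of $C$ — contradicting pairwise compatibility of $X$ exactly as above. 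So $G \subseteq \sproj \cap \bigcap_{i\in I}\{x_i=0\}$ is admissible.

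Finally, since $\sproj$ is a polytope it has only finitely many faces, so the admissible face $G$ is contained in at least one maximal admissible face $F$; then $X \subseteq C \subseteq G \subseteq F$, as required. The arguments for $\qproj$, $\scone$ and $\qcone$ are identical — non-negativity, the (quadrilateral) matching equations, and the quadrilateral constraints behave the same way, and the minimal-face / supporting-hyperplane argument is unchanged (for the cones one may alternatively just quote Lemma~\ref{l-proj-cone} and transport the result). The one point requiring a little care — and the only real obstacle — is the middle step: making sure the face one extracts from $\conv(X)$ really is admissible, rather than merely containing admissible points. The trick is to reduce to the coordinate subspace cut out by the coordinates that vanish on all of $X$, where pairwise compatibility of $X$ directly forces admissibility of the whole slice.
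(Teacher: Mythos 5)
Your proposal is correct and, after some meandering, lands on essentially the paper's argument: pairwise compatibility of $X$ forces, for each tetrahedron, two of the three quadrilateral coordinates to vanish simultaneously on all of $X$; intersecting $\sproj$ with those $2n$ supporting hyperplanes $x_i=0$ yields an admissible face containing $X$; finiteness of the face lattice then gives a maximal admissible face above it. The paper takes exactly this route directly. The detours in your write-up --- computing $\conv(X)$ and proving it is entirely admissible, and the first attempt at a ``minimal face containing $C$'' contradiction --- are not wrong but are unnecessary: you can work with $X$ itself (as the paper does), since linearity already guarantees that a coordinate vanishing on $X$ vanishes on $\conv(X)$ and on the face you construct. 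You also correctly diagnosed that the single-point minimal-face contradiction from the preceding lemma does not transfer verbatim to a multi-point set, which is precisely why the paper's proof switches from a ``minimality'' argument to an explicit intersection-of-hyperplanes construction at this stage.
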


\begin{proof}
    We consider the case $X \subseteq \sproj$; again the arguments for
    $\qproj$, $\scone$ and $\qcone$ are identical.
    As in the proof of Corollary~\ref{c-adm-facets}, the pairwise
    compatibility constraint shows that, for each tetrahedron
    of the underlying triangulation, two of the three corresponding
    quadrilateral coordinates are simultaneously zero for all
    points in $X$.  As a consequence, $X$ lies within
    all $2n$ corresponding hyperplanes of the form $x_i = 0$.

    Let $G$ be the intersection of $\sproj$ with these $2n$
    hyperplanes.  It follows that every point in $G$ is admissible,
    and that $X \subseteq G \subseteq \sproj$.
    Moreover, because each hyperplane $x_i = 0$ is a supporting hyperplane for
    $\sproj$, it follows that $G$ is a face of $\sproj$ (and therefore an
    admissible face).  By finiteness of the face lattice,
    the admissible face $G$ must in turn belong to some
    maximal admissible face $F$ containing all of the points in $X$.
\end{proof}

Note that the set $X$ might be contained in several distinct maximal
admissible faces.  However, there is always a unique admissible face
of minimal dimension containing $X$ (specifically, the intersection of
\emph{all} admissible faces containing $X$).

We come now to our categorisation of maximal admissible faces.
Lemma~\ref{l-maxadm-cat} gives necessary and sufficient conditions for
a face to be a maximal admissible face, and
Corollary~\ref{c-maxadm-vertices} extends these to necessary and
sufficient conditions for an arbitrary set of points.

\begin{lemma} \label{l-maxadm-cat}
    Let $F$ be any admissible face of the projective solution space $\sproj$.
    Then the following conditions are equivalent:
    \begin{enumerate}[(i)]
        \item \label{en-maxadm-cat-max}
        $F$ is a maximal admissible face of $\sproj$;
        \item \label{en-maxadm-cat-point}
        there is no admissible point in $\sproj$ that is not in $F$
        but that is compatible with every point in $F$;
        \item \label{en-maxadm-cat-vertex}
        there is no admissible vertex of $\sproj$ that is not in $F$
        but that is compatible with every vertex of $F$.
    \end{enumerate}
    The same is true if we replace $\sproj$ with $\qproj$.
    In the solution cones $\scone$ and $\qcone$, conditions
    (\ref{en-maxadm-cat-max}) and
    (\ref{en-maxadm-cat-point}) are equivalent but we cannot use
    (\ref{en-maxadm-cat-vertex}).
\end{lemma}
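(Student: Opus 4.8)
The plan is to prove the cycle of implications $(\ref{en-maxadm-cat-max}) \Rightarrow (\ref{en-maxadm-cat-point}) \Rightarrow (\ref{en-maxadm-cat-vertex}) \Rightarrow (\ref{en-maxadm-cat-max})$, working in $\sproj$; the argument for $\qproj$ is identical, and the cone case differs only in that $\scone$ and $\qcone$ have no vertices (their apex at the origin is the minimal face, not a vertex), so condition (\ref{en-maxadm-cat-vertex}) is vacuous or meaningless there while (\ref{en-maxadm-cat-max}) $\Leftrightarrow$ (\ref{en-maxadm-cat-point}) still goes through verbatim.

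For $(\ref{en-maxadm-cat-max}) \Rightarrow (\ref{en-maxadm-cat-point})$: suppose $F$ is a maximal admissible face and, for contradiction, that there is an admissible point $\mathbf{p} \notin F$ compatible with every point of $F$. Then $F \cup \{\mathbf{p}\}$ is a set of pairwise compatible admissible points (points within $F$ are compatible by Lemma~\ref{l-adm-face-to-compatible}, and $\mathbf{p}$ is compatible with each of them by hypothesis), so by Lemma~\ref{l-adm-compatible-to-face} it lies inside some maximal admissible face $F'$. Since $F \subseteq F'$ and $F$ is maximal, $F = F'$; but $\mathbf{p} \in F'$ contradicts $\mathbf{p} \notin F$. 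The implication $(\ref{en-maxadm-cat-point}) \Rightarrow (\ref{en-maxadm-cat-vertex})$ is the easy direction: a vertex not in $F$ but compatible with every vertex of $F$ is in particular an admissible point not in $F$; to see it is compatible with \emph{every} point of $F$ (not just the vertices), note that compatibility of a vertex $\mathbf{v}$ with all vertices of $F$ means that for each tetrahedron at most one quadrilateral coordinate is ever non-zero across $\mathbf{v}$ together with all vertices of $F$, and since every point of $F$ is a convex combination of its vertices, its support on quadrilateral coordinates is contained in the union of the vertices' supports — hence $\mathbf{v}$ is compatible with all of $F$, contradicting (\ref{en-maxadm-cat-point}). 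Contrapositively, (\ref{en-maxadm-cat-point}) implies (\ref{en-maxadm-cat-vertex}).

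The step I expect to carry the real content is $(\ref{en-maxadm-cat-vertex}) \Rightarrow (\ref{en-maxadm-cat-max})$. Here I argue the contrapositive: if $F$ is \emph{not} maximal, then it is a strict subface of some admissible face $G$, and I must produce an admissible vertex of $\sproj$ outside $F$ that is compatible with every vertex of $F$. Take any vertex $\mathbf{v}$ of $G$ that does not lie in $F$ — such a vertex exists because $F \subsetneq G$ and a face is the convex hull of the vertices it contains, so if all vertices of $G$ lay in $F$ we would have $G \subseteq F$. This $\mathbf{v}$ is a vertex of $G$, hence admissible (as $G$ is admissible), and $\mathbf{v} \notin F$. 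Finally, $\mathbf{v}$ and every vertex of $F$ are all points of the admissible face $G$, so they are pairwise compatible by Lemma~\ref{l-adm-face-to-compatible}; in particular $\mathbf{v}$ is compatible with every vertex of $F$. Thus (\ref{en-maxadm-cat-vertex}) fails, completing the cycle.

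The one subtlety to handle carefully is the claim that compatibility of a vertex with all vertices of a face upgrades to compatibility with the whole face — this rests on the elementary observation that for a convex combination $\mathbf{x} = \sum \lambda_j \mathbf{v}_j$ with $\lambda_j \ge 0$, a coordinate $x_k$ is non-zero only if some $(\mathbf{v}_j)_k$ is non-zero (there is no cancellation, since all coordinates of points in $\sproj$ are non-negative). I will state this non-cancellation fact explicitly, since it is exactly the feature that makes the vertex-level condition (\ref{en-maxadm-cat-vertex}) equivalent to the point-level condition (\ref{en-maxadm-cat-point}), and it is also why the analogous statement cannot be salvaged for the cones, where one still has non-negativity but no vertices to build convex combinations from.
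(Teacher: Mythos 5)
Your proof is correct and runs essentially parallel to the paper's. The paper organises the equivalences as two separate biconditionals, proving $(\ref{en-maxadm-cat-max}) \Leftrightarrow (\ref{en-maxadm-cat-point})$ and then $(\ref{en-maxadm-cat-max}) \Leftrightarrow (\ref{en-maxadm-cat-vertex})$, whereas you prove the cycle $(\ref{en-maxadm-cat-max}) \Rightarrow (\ref{en-maxadm-cat-point}) \Rightarrow (\ref{en-maxadm-cat-vertex}) \Rightarrow (\ref{en-maxadm-cat-max})$. Your step $(\ref{en-maxadm-cat-max}) \Rightarrow (\ref{en-maxadm-cat-point})$ and your step $(\ref{en-maxadm-cat-vertex}) \Rightarrow (\ref{en-maxadm-cat-max})$ are essentially verbatim the paper's arguments. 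The one genuinely distinct move is your $(\ref{en-maxadm-cat-point}) \Rightarrow (\ref{en-maxadm-cat-vertex})$, which you carry out by the explicit non-cancellation observation (a coordinate of a convex combination of non-negative vectors is non-zero only if some extreme point has it non-zero); the paper gets the corresponding content by feeding the set $V \cup \{\mathbf{u}\}$ into Lemma~\ref{l-adm-compatible-to-face} and using $\conv(V) = F$. Your version makes the underlying geometric reason more visible, which is a small pedagogical gain; the paper's version is slightly shorter because the lemma is already available. Note that your upgrade step also needs the fact that the vertices of $F$ are themselves pairwise compatible (so that, per tetrahedron, all of them concentrate on a common quadrilateral slot), which follows from Lemma~\ref{l-adm-face-to-compatible} applied to $F$; you use this implicitly and it would be worth stating.

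One small factual slip: you assert that the cones $\scone$ and $\qcone$ ``have no vertices.'' In fact a pointed polyhedral cone has exactly one vertex, the apex at the origin (it is the unique $0$-dimensional face). The paper makes exactly this point. The reason condition~(\ref{en-maxadm-cat-vertex}) is useless in the cones is not that the vertex set is empty but that it consists only of the origin, which lies in every non-empty face and is compatible with everything --- so the condition holds vacuously for every non-empty admissible face, not just the maximal ones, and the implication $(\ref{en-maxadm-cat-vertex}) \Rightarrow (\ref{en-maxadm-cat-max})$ breaks down. Your conclusion is right; only the stated reason needs correcting.
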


\begin{proof}
    We first prove
    (\ref{en-maxadm-cat-max}) $\Leftrightarrow$
    (\ref{en-maxadm-cat-point}) for all four solution spaces.
    As usual we work in $\sproj$ only, since the arguments in the other
    solution spaces are identical.

    For (\ref{en-maxadm-cat-max}) $\Rightarrow$ (\ref{en-maxadm-cat-point}),
    suppose that $F$ is a maximal admissible face and there is some
    admissible point $\mathbf{x} \in \sproj \backslash F$ compatible
    with every point in $F$.  Then by Lemma~\ref{l-adm-compatible-to-face}
    there is some admissible face containing $F \cup \{\mathbf{x}\}$,
    contradicting the maximality of $F$.

    For (\ref{en-maxadm-cat-point}) $\Rightarrow$ (\ref{en-maxadm-cat-max}),
    suppose that $F$ is not a maximal admissible face.  This means
    that there is some larger admissible face $G \supset F$,
    and from Lemma~\ref{l-adm-face-to-compatible} it follows that there
    is some point $\mathbf{x} \in G \backslash F$ that is admissible
    and compatible with every point in $F$.

    To prove (\ref{en-maxadm-cat-point}) $\Leftrightarrow$
    (\ref{en-maxadm-cat-vertex}) we require the additional fact that $\sproj$
    (or $\qproj$) is a polytope, which means that every face is the
    convex hull of its vertices.  This is why
    condition~(\ref{en-maxadm-cat-vertex}) fails in the cones
    $\scone$ and $\qcone$, where the only vertex is the origin.

    For (\ref{en-maxadm-cat-max}) $\Rightarrow$ (\ref{en-maxadm-cat-vertex}),
    suppose that $F$ is a maximal admissible face with vertex set $V$,
    and suppose there is some
    admissible vertex $\mathbf{u}$ of $\sproj$ not in $F$ but compatible
    with every $\mathbf{v} \in V$.  By Lemma~\ref{l-adm-compatible-to-face}
    there is some admissible face $G$ containing $V \cup \{\mathbf{u}\}$,
    and by convexity of faces it follows that $G \supseteq \conv(V) = F$.
    Because $\mathbf{u} \notin F$ we have $G \neq F$, contradicting the
    maximality of $F$.

    For (\ref{en-maxadm-cat-vertex}) $\Rightarrow$ (\ref{en-maxadm-cat-max}),
    suppose that $F$ is not a maximal admissible face; again there must be
    some larger admissible face $G \supset F$.  Because faces are
    convex hulls of their vertices, $G$ must contain
    some admissible vertex $\mathbf{v}$ not in $F$, and applying
    Lemma~\ref{l-adm-face-to-compatible} again we find that $\mathbf{v}$ is
    an admissible vertex of $\sproj$ not in $F$ but compatible with
    every vertex of $F$.
\end{proof}

We digress briefly to make a simple observation based on
Lemma~\ref{l-maxadm-cat}.  Recall the \emph{vertex links} from
Section~\ref{s-prelim-tri}, which correspond to normal surfaces
that surround the vertices of the triangulation $\tri$ and
consist entirely of triangular discs.

\begin{corollary} \label{c-maxadm-links}
    In the standard solution cone $\scone$,
    every maximal admissible face contains every vertex link from the
    underlying triangulation.
\end{corollary}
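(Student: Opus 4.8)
The plan is to combine the characterisation of maximal admissible faces in terms of compatible vertices (Lemma~\ref{l-maxadm-cat}, condition~(\ref{en-maxadm-cat-vertex})—or rather its cone analogue via condition~(\ref{en-maxadm-cat-point})) with the trivial but crucial observation that vertex links are compatible with \emph{everything}. First I would recall the key facts established earlier: the vertex links are admissible points of $\scone$ (noted in Section~\ref{s-prelim-tri}), and, being supported entirely on triangular coordinates, they are zero on all $3n$ quadrilateral coordinates. In particular, a vertex link $\mathbf{\ell}$ satisfies the quadrilateral constraints trivially alongside any other admissible vector: for each tetrahedron, the quadrilateral coordinates of $\mathbf{\ell}$ are all zero, so the ``at most one nonzero among $q_{i,1},q_{i,2},q_{i,3}$ in either vector'' condition for the pair $(\mathbf{\ell},\mathbf{v})$ reduces to the quadrilateral constraint on $\mathbf{v}$ alone, which holds because $\mathbf{v}$ is admissible. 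Hence every vertex link is compatible with every admissible point of $\scone$.

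Now let $F$ be a maximal admissible face of $\scone$, and suppose for contradiction that some vertex link $\mathbf{\ell}$ is not in $F$. By the previous paragraph, $\mathbf{\ell}$ is an admissible point of $\scone$ compatible with every point of $F$ (each point of an admissible face is admissible, so the compatibility observation applies pointwise). This is exactly the situation forbidden by condition~(\ref{en-maxadm-cat-point}) of Lemma~\ref{l-maxadm-cat}, which is equivalent to maximality~(\ref{en-maxadm-cat-max}) in the cones $\scone$ and $\qcone$. The contradiction shows $\mathbf{\ell} \in F$, and since $\mathbf{\ell}$ was an arbitrary vertex link, $F$ contains every vertex link of $\tri$.

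The argument is short and I do not anticipate a genuine obstacle; the only point requiring a little care is making sure the compatibility claim is stated and used \emph{pointwise} on $F$ (every point of an admissible face is admissible, so Lemma~\ref{l-maxadm-cat}(\ref{en-maxadm-cat-point}) applies cleanly), rather than trying to invoke the vertex-based condition~(\ref{en-maxadm-cat-vertex}), which is unavailable in the cone setting since the only vertex of $\scone$ is the origin. One could alternatively phrase the whole thing directly in $\sproj$ using the scaled vertex links (which \emph{are} vertices of $\sproj$ when the triangulation has the relevant vertex, or at least admissible points) and then transport to $\scone$ via Lemma~\ref{l-proj-cone}, but working directly in $\scone$ with the point-based condition is cleaner and avoids any fuss about whether the scaled links are actually vertices of $\sproj$.
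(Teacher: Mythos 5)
Your proof is correct and follows essentially the same route as the paper's: observe that vertex links have all quadrilateral coordinates zero, hence are admissible and compatible with every (admissible) point of $\scone$, and then invoke the point-based maximality characterisation of Lemma~\ref{l-maxadm-cat}. You are somewhat more explicit than the paper in flagging that condition~(ii) rather than condition~(iii) must be used in the cone setting, which is a correct and careful reading.
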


\begin{proof}
    Vertex links represent surfaces with only triangular discs, and so the
    corresponding vectors in $\R^{7n}$ do not contain any non-zero
    quadrilateral coordinates at all.  Therefore every vertex link
    is admissible and compatible with
    \emph{every} point $\mathbf{x} \in \scone$,
    and so by Lemma~\ref{l-maxadm-cat} every vertex link must belong to every
    maximal admissible face of $\scone$.
\end{proof}

It should be noted
that Corollary~\ref{c-maxadm-links} extends to the standard
projective solution space $\sproj$ if we replace each vertex link
$\mathbf{v}$ with the scaled multiple $\mathbf{v}/\sum v_i$.
However, it does not extend to the quadrilateral projective solution
space $\qproj$, since in quadrilateral coordinates every vertex link
projects to the zero vector.

For our final result of this section, we extend the categorisation of
Lemma~\ref{l-maxadm-cat} to apply to arbitrary sets of points within the
solution spaces.

\begin{corollary} \label{c-maxadm-vertices}
    Let $X \subseteq \sproj$ be any set of points.
    Then the following conditions are equivalent:
    \begin{enumerate}[(i)]
        \item \label{en-maxadm-sets-max}
        $X$ is a maximal admissible face of $\sproj$;
        \item \label{en-maxadm-sets-points}
        $X$ is a maximal set of admissible and
        pairwise compatible points in $\sproj$;
        \item \label{en-maxadm-sets-vertices}
        $X$ is the convex hull of a maximal set of admissible and
        pairwise compatible vertices of $\sproj$.
    \end{enumerate}

    In conditions~(\ref{en-maxadm-sets-points})
    and~(\ref{en-maxadm-sets-vertices}), ``maximal'' is used in
    the context of set inclusion.  For instance, in
    (\ref{en-maxadm-sets-points}) it means that there is no larger set
    $X' \supset X$ of admissible and pairwise compatible points in $\sproj$.

    These equivalences remain true if we replace $\sproj$ with $\qproj$.
    In the solution cones $\scone$ and $\qcone$, conditions
    (\ref{en-maxadm-cat-max}) and
    (\ref{en-maxadm-cat-point}) are equivalent but again we cannot use
    (\ref{en-maxadm-cat-vertex}).
\end{corollary}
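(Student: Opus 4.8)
The plan is to prove the chain of equivalences by combining the face-level categorisation of Lemma~\ref{l-maxadm-cat} with the compatibility-to-face correspondence of Lemmata~\ref{l-adm-face-to-compatible} and~\ref{l-adm-compatible-to-face}. The three conditions live at slightly different levels (a face, a set of points, a convex hull of vertices), so the cleanest route is to show (\ref{en-maxadm-sets-max}) $\Rightarrow$ (\ref{en-maxadm-sets-points}) $\Rightarrow$ (\ref{en-maxadm-sets-vertices}) $\Rightarrow$ (\ref{en-maxadm-sets-max}), rather than a web of two-way implications.

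First I would handle (\ref{en-maxadm-sets-max}) $\Rightarrow$ (\ref{en-maxadm-sets-points}). If $X$ is a maximal admissible face, then by Lemma~\ref{l-adm-face-to-compatible} its points are admissible and pairwise compatible. To see maximality as a set, suppose $X' \supset X$ is a larger set of admissible pairwise compatible points; pick $\mathbf{x} \in X' \setminus X$. Then $\mathbf{x}$ is admissible, not in $F = X$, and compatible with every point of $F$ (since $X \cup \{\mathbf{x}\} \subseteq X'$ is pairwise compatible), contradicting the implication (\ref{en-maxadm-cat-max}) $\Rightarrow$ (\ref{en-maxadm-cat-point}) of Lemma~\ref{l-maxadm-cat}.

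Next, (\ref{en-maxadm-sets-points}) $\Rightarrow$ (\ref{en-maxadm-sets-vertices}). Given a maximal set $X$ of admissible pairwise compatible points, Lemma~\ref{l-adm-compatible-to-face} places $X$ inside some maximal admissible face $F$; by maximality of $X$ as a set and the fact that $F$ itself is a set of admissible pairwise compatible points (Lemma~\ref{l-adm-face-to-compatible}), we get $X = F$. Since $\sproj$ (or $\qproj$) is a polytope, $F$ is the convex hull of its vertices $V$, and these vertices are admissible and pairwise compatible. It remains to check $V$ is a \emph{maximal} such set of vertices: if some admissible vertex $\mathbf{u} \notin F$ were compatible with all of $V$, then $V \cup \{\mathbf{u}\}$ would be admissible and pairwise compatible, and Lemma~\ref{l-adm-compatible-to-face} would enlarge $F$ — contradicting (\ref{en-maxadm-cat-max}) $\Rightarrow$ (\ref{en-maxadm-cat-vertex}) of Lemma~\ref{l-maxadm-cat}. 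Finally, (\ref{en-maxadm-sets-vertices}) $\Rightarrow$ (\ref{en-maxadm-sets-max}): if $X = \conv(V)$ for a maximal set $V$ of admissible pairwise compatible vertices, then Lemma~\ref{l-adm-compatible-to-face} gives a maximal admissible face $F \supseteq V$, hence $F \supseteq \conv(V) = X$ by convexity of faces. Every vertex of $F$ is admissible (as $F$ is an admissible face) and compatible with all of $V$ (Lemma~\ref{l-adm-face-to-compatible}), so by maximality of $V$ every vertex of $F$ already lies in $V$, whence $F = \conv(V) = X$ and $X$ is indeed a maximal admissible face.

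The main obstacle is bookkeeping rather than depth: one has to be careful that ``maximal'' is consistently interpreted with respect to set inclusion of point sets versus faces, and that the vertices produced in step two genuinely form a \emph{maximal} compatible set and not merely the vertex set of some face. The cone case ($\scone$, $\qcone$) follows the point-based arguments verbatim, with the usual caveat — as in Lemma~\ref{l-maxadm-cat} — that condition~(\ref{en-maxadm-sets-vertices}) is unavailable since the only vertex is the origin; I would simply note this at the end, mirroring the statement of Lemma~\ref{l-maxadm-cat}.
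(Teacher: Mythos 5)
Your proposal is correct and rests on exactly the same ingredients as the paper's proof — Lemmata~\ref{l-maxadm-cat}, \ref{l-adm-face-to-compatible}, and \ref{l-adm-compatible-to-face} deployed in essentially the same way, with only a cosmetic difference in organisation (a cycle (\ref{en-maxadm-sets-max})~$\Rightarrow$~(\ref{en-maxadm-sets-points})~$\Rightarrow$~(\ref{en-maxadm-sets-vertices})~$\Rightarrow$~(\ref{en-maxadm-sets-max}) versus the paper's star from~(\ref{en-maxadm-sets-max})). Your step (\ref{en-maxadm-sets-points})~$\Rightarrow$~(\ref{en-maxadm-sets-vertices}) already contains the paper's (\ref{en-maxadm-sets-points})~$\Rightarrow$~(\ref{en-maxadm-sets-max}) argument verbatim as its first half, so the cone case falls out cleanly despite the cyclic structure.
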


\begin{proof}
    Steps (\ref{en-maxadm-sets-max}) $\Rightarrow$
    (\ref{en-maxadm-sets-points}) and
    (\ref{en-maxadm-sets-max}) $\Rightarrow$
    (\ref{en-maxadm-sets-vertices})
    follow immediately from Lemma~\ref{l-maxadm-cat}.
    To prove the remaining steps
    (\ref{en-maxadm-sets-points}) $\Rightarrow$
    (\ref{en-maxadm-sets-max}) and
    (\ref{en-maxadm-sets-vertices}) $\Rightarrow$
    (\ref{en-maxadm-sets-max}) we work in $\sproj$ as always,
    since the arguments are identical for $\qproj$, and also
    $\scone$ and $\qcone$ where applicable.

    For (\ref{en-maxadm-sets-points}) $\Rightarrow$
    (\ref{en-maxadm-sets-max}), let $X$ be some maximal set of
    admissible and pairwise compatible points in $\sproj$.
    By Lemma~\ref{l-adm-compatible-to-face} there is some maximal
    admissible face $F \supseteq X$, and if $F \neq X$ then
    Lemma~\ref{l-adm-face-to-compatible} contradicts the maximality of
    our original set $X$.

    For (\ref{en-maxadm-sets-vertices}) $\Rightarrow$
    (\ref{en-maxadm-sets-max}), let $X = \conv(V)$ where $V$ is a
    maximal set of admissible and pairwise compatible vertices of
    $\sproj$.  Again Lemma~\ref{l-adm-compatible-to-face} gives some
    maximal admissible face $F \supseteq V$.   Because $F$ is the convex
    hull of its vertices, if $F \neq X$ then $F$ must have some
    additional vertex $\mathbf{v} \notin V$.
    By Lemma~\ref{l-adm-face-to-compatible} it follows that
    $\mathbf{v}$ is admissible
    and compatible with every vertex in $V$, contradicting the
    maximality of $V$.
\end{proof}

\section{Bounds for general polytopes} \label{s-ubt}

Our ultimate aim is to place bounds on the complexity of the
\emph{admissible} face semilattice for the projective solution space.
To do this, we must first understand the complexity of
the \emph{full} face lattice for an arbitrary polytope.

We begin this section by examining the behaviour of McMullen's
upper bound as we change the number of facets $k$ (Lemma~\ref{l-ubt-k})
and the dimension $d$ (Lemma~\ref{l-ubt-d}).  We follow
with an asymptotic summation result that will prove useful in
later sections (Corollary~\ref{c-m-sums}).

\begin{notation}
    For any integers $2 \leq d < k$, let $M_{d,k}$ denote McMullen's
    upper bound as expressed in Theorem~\ref{t-ubt}:
    \[ M_{d,k} =
        \binom{k - \lfloor \frac{d+1}{2} \rfloor}{k - d} +
        \binom{k - \lfloor \frac{d+2}{2} \rfloor}{k - d}. \]
    A simple rearrangement gives the equivalent expression:
    \begin{equation} \label{eqn-m}
        M_{d,k} = \left\{
            \begin{array}{ll}
                \binom{k-\frac{d}{2}}{\frac{d}{2}} +
                \binom{k-\frac{d}{2}-1}{\frac{d}{2}-1} &
                    \mbox{if $d$ is even;} \smallskip \\
                2 \binom{k-\frac{d+1}{2}}{\frac{d+1}{2}-1} &
                    \mbox{if $d$ is odd.} \\
            \end{array} \right.
    \end{equation}
\end{notation}

Our first simple result describes the behaviour of $M_{d,k}$ as we vary
the number of facets.

\begin{lemma} \label{l-ubt-k}
    For any integers $2 \leq d < k < k'$, we have $M_{d,k} < M_{d,k'}$.
    That is, increasing the number of facets of a polytope
    will always increase McMullen's upper bound.
\end{lemma}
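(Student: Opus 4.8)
The plan is to work directly with the explicit formula (\ref{eqn-m}) and show that each binomial coefficient appearing in $M_{d,k}$ is strictly increasing in $k$ when $d$ is held fixed. Since $M_{d,k}$ is a sum of such coefficients with positive (constant) multiplicities, strict monotonicity of the individual terms gives strict monotonicity of the sum, and transitivity then reduces the general case $k < k'$ to the single step $k' = k+1$.

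The key observation is elementary: for a fixed lower index $b \geq 0$, the map $a \mapsto \binom{a}{b}$ is strictly increasing on the integers $a \geq b$, because $\binom{a+1}{b} = \binom{a}{b} + \binom{a}{b-1}$ and $\binom{a}{b-1} \geq 1$ whenever $b - 1 \leq a$ (which holds here since $a \geq b > b-1$). So the first step is to confirm that, in each branch of (\ref{eqn-m}), every binomial coefficient $\binom{k - c}{\cdot}$ has its upper index $k - c$ at least as large as its lower index. When $d$ is even we have terms $\binom{k - d/2}{d/2}$ and $\binom{k - d/2 - 1}{d/2 - 1}$; since $k > d$ gives $k - d/2 > d/2$, both upper indices comfortably exceed the respective lower indices. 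When $d$ is odd we have $2\binom{k - (d+1)/2}{(d+1)/2 - 1}$; again $k > d$ forces $k - (d+1)/2 > (d+1)/2 - 1$. With the lower indices fixed (they depend only on $d$) and the upper indices strictly increasing in $k$, each term strictly increases, hence $M_{d,k} < M_{d,k+1}$.

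The only genuinely fiddly point is the book-keeping with floors in the original form of Theorem~\ref{t-ubt}, which is precisely why I would switch to (\ref{eqn-m}) at the outset and split into the even and odd cases there; after that the argument is just the two-line monotonicity fact above applied termwise. For the general inequality $M_{d,k} < M_{d,k'}$ with $k < k'$, I would either iterate the single-step result, or equally simply observe that the termwise comparison $\binom{k-c}{b} < \binom{k'-c}{b}$ holds directly for any $k < k'$ by the same strict monotonicity of $a \mapsto \binom{a}{b}$. Either way there is no real obstacle here — the lemma is a straightforward consequence of the closed form, and the main care required is simply matching each binomial coefficient to the correct parity branch and checking the trivial index constraints guaranteed by the hypothesis $d < k$.
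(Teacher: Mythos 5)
Your approach matches the paper's exactly: rewrite via equation~(\ref{eqn-m}), split on parity, and use termwise monotonicity of the binomials. However, there is a small but genuine error in the ``key observation''. You claim that for a fixed lower index $b \geq 0$ the map $a \mapsto \binom{a}{b}$ is strictly increasing, justified by $\binom{a+1}{b} = \binom{a}{b} + \binom{a}{b-1}$ with $\binom{a}{b-1} \geq 1$. This fails for $b = 0$: by the paper's convention (and the standard one), $\binom{a}{-1} = 0$, so $\binom{a}{0} = 1$ is \emph{constant} in $a$, not strictly increasing. The case $b = 0$ actually occurs in the formula: for $d = 2$ the even branch of (\ref{eqn-m}) contains the term $\binom{k - d/2 - 1}{d/2 - 1} = \binom{k-2}{0}$, which does not increase with $k$. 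The paper's proof handles this explicitly by invoking two separate facts, $\binom{m}{i} < \binom{m+1}{i}$ for $1 \leq i \leq m$ \emph{and} $\binom{m}{0} = \binom{m+1}{0}$, so that the sum is still strictly increasing because the other term (here $\binom{k - d/2}{d/2}$, whose lower index is $d/2 \geq 1$) strictly increases. Your argument reaches the right conclusion, but as written it asserts a false termwise claim; you need to replace ``each term strictly increases'' with ``each term is non-decreasing, and at least one term strictly increases'' and verify the latter (which is easy since $d \geq 2$ guarantees at least one binomial with lower index $\geq 1$ in every branch).
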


\begin{proof}
    This follows immediately from equation~(\ref{eqn-m}),
    using the relations $\binom{m}{i} < \binom{m+1}{i}$ for $1 \leq i \leq m$
    and $\binom{m}{0} = \binom{m+1}{0}$ for $0 \leq m$.
\end{proof}

Varying the dimension is a little more complicated.  McMullen's
bound is not a monotonic function of $d$, and in general there can be
many local maxima and minima as $d$ ranges from $2$ to $k-1$;
Figure~\ref{fig-ubt} illustrates this for $k=100$ facets.
However, $M_{d,k}$ is well-behaved for $d \leq k/2$, as shown by the
following result.

\begin{figure}[htb]
    \centering
    \includegraphics[scale=0.6]{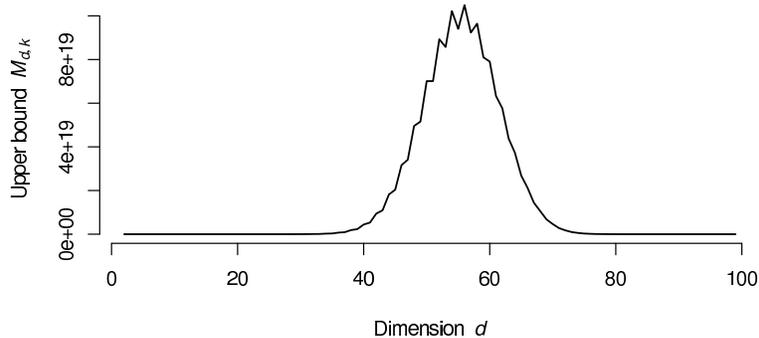}
    \caption{McMullen's upper bound $M_{d,k}$ for $k=100$ facets}
    \label{fig-ubt}
\end{figure}

\begin{lemma} \label{l-ubt-d}
    For any integers $d,k$ with $2 \leq d \leq k/2$, we have
    $M_{d,k} \leq M_{d+1,k}$.
    That is, increasing the dimension of a polytope will not
    decrease McMullen's upper bound, as long as there are sufficiently
    many facets.
\end{lemma}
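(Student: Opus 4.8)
The plan is to work directly from the explicit formula~(\ref{eqn-m}) for $M_{d,k}$, splitting into cases according to the parity of $d$, and to reduce the inequality $M_{d,k}\leq M_{d+1,k}$ in each case to a comparison between sums of binomial coefficients whose arguments differ by small, controlled amounts. The point of the hypothesis $d\leq k/2$ is that it forces the relevant ``upper'' indices of the binomial coefficients to stay at most half of the ``lower'' indices, which is exactly the regime where $\binom{m}{i}$ increases in $i$; this is what rescues the inequality from the non-monotone behaviour visible in Figure~\ref{fig-ubt} for larger~$d$.

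First I would treat the case $d$ even, say $d=2e$ with $2\le e$ and $2e\le k/2$. Here $M_{d,k}=\binom{k-e}{e}+\binom{k-e-1}{e-1}$, while $d+1=2e+1$ is odd so $M_{d+1,k}=2\binom{k-e-1}{e}$. Using Pascal's rule $\binom{k-e}{e}=\binom{k-e-1}{e}+\binom{k-e-1}{e-1}$, the desired inequality $M_{d,k}\le M_{d+1,k}$ becomes $\binom{k-e-1}{e}+2\binom{k-e-1}{e-1}\le 2\binom{k-e-1}{e}$, i.e.\ $2\binom{k-e-1}{e-1}\le\binom{k-e-1}{e}$. Since $e\le k/4$ one checks that $k-e-1\ge 2e-1$, so the ratio $\binom{k-e-1}{e}\big/\binom{k-e-1}{e-1}=(k-2e)/e\ge 2$ exactly when $k\ge 4e=2d$, which is our hypothesis. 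So in the even case the inequality holds (and, as the calculation shows, it is typically strict).

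Next I would treat $d$ odd, say $d=2e-1$ with $2\le d$, so $e\ge 2$; then $d+1=2e$ is even. Here $M_{d,k}=2\binom{k-e}{e-1}$ and $M_{d+1,k}=\binom{k-e}{e}+\binom{k-e-1}{e-1}$. Applying Pascal to the first term on the right, $\binom{k-e}{e}=\binom{k-e-1}{e}+\binom{k-e-1}{e-1}$, the inequality becomes $2\binom{k-e}{e-1}\le\binom{k-e-1}{e}+2\binom{k-e-1}{e-1}$; rewriting $\binom{k-e}{e-1}=\binom{k-e-1}{e-1}+\binom{k-e-1}{e-2}$ reduces this to $2\binom{k-e-1}{e-2}\le\binom{k-e-1}{e}$, which again follows from the ratio estimate in the regime $e\lesssim k/4$ guaranteed by $d\le k/2$. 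The main obstacle, such as it is, will be bookkeeping: getting the floor/ceiling manipulations and the two parity cases aligned so that each reduces cleanly to a single ``ratio $\ge 2$'' inequality of the form $\binom{m}{i}\ge 2\binom{m}{i-2}$ (or $\binom{m}{i}\ge 2\binom{m}{i-1}$), and verifying the boundary cases $d=2$ and $d$ near $k/2$ separately to be sure no binomial coefficient degenerates. Once the reductions are in place, the remaining estimates are elementary consequences of $\binom{m}{j}/\binom{m}{j-1}=(m-j+1)/j$ together with the hypothesis $2d\le k$.
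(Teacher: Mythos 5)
Your approach is correct and is essentially the paper's approach: both proofs split on the parity of $d$, substitute into the explicit formula~(\ref{eqn-m}), and reduce $M_{d,k}\leq M_{d+1,k}$ to an elementary inequality that follows from $2d\leq k$. The stylistic difference is that the paper expands the binomial coefficients as factorials, cancels common factors, and lands on a small polynomial inequality ($4s\leq k$ in the even case, $k^2-k\leq 2(k-s)^2$ in the odd case), whereas you apply Pascal's rule to strip off the common terms and arrive at a clean ratio comparison ($2\binom{k-e-1}{e-1}\leq\binom{k-e-1}{e}$ in the even case, $2\binom{k-e-1}{e-2}\leq\binom{k-e-1}{e}$ in the odd case). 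Both routes work; yours arguably makes the structure a bit more transparent. One small caveat in the odd case: your heuristic ``$e\lesssim k/4$'' is not literally implied by $d\leq k/2$ --- for instance $k=6$, $d=3$ gives $e=2>k/4$ --- so when you carry out the double-ratio estimate $\binom{k-e-1}{e}\big/\binom{k-e-1}{e-2}=\frac{(k-2e)(k-2e+1)}{e(e-1)}$ you should use the exact hypothesis $k\geq 4e-2$, which gives $\frac{(k-2e)(k-2e+1)}{e(e-1)}\geq\frac{2(e-1)(2e-1)}{e(e-1)}=4-\tfrac{2}{e}\geq 3$ for $e\geq 2$; this closes the gap and the rest of your plan goes through.
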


\begin{proof}
    We begin by noting that $2 \leq d \leq k/2$ implies $d+1 < k$,
    so both $M_{d,k}$ and $M_{d+1,k}$ are defined.
    Our proof relies on a straightforward expansion of the
    binomial coefficients in equation~(\ref{eqn-m}).
    As with equation~(\ref{eqn-m}), we treat even and odd $d$ separately.

    If $d$ is even, let $d=2s$.  Then $M_{d,k} \leq M_{d+1,k}$ expands to
    $\binom{k-s}{s} + \binom{k-s-1}{s-1} \leq 2 \binom{k-s-1}{s}$,
    or
    \[ \frac{(k-s)!}{s!(k-2s)!} + \frac{(k-s-1)!}{(s-1)!(k-2s)!} \leq
        \frac{2(k-s-1)!}{s!(k-2s-1)!}. \]
    Cancelling common factors reduces this to $(k-s) + s \leq 2(k-2s)$;
    that is, $4s \leq k$, which is immediate from our initial
    condition $d \leq k/2$.

    If $d$ is odd, let $d=2s-1$.  Now $M_{d,k} \leq M_{d+1,k}$ expands to
    $2 \binom{k-s}{s-1} \leq \binom{k-s}{s} + \binom{k-s-1}{s-1}$, or
    \[ \frac{2(k-s)!}{(s-1)!(k-2s+1)!} \leq
        \frac{(k-s)!}{s!(k-2s)!} + \frac{(k-s-1)!}{(s-1)!(k-2s)!}. \]
    This simplifies to
    $2(k-s)s \leq (k-s)(k-2s+1) + s(k-2s+1)$, which in turn can be
    rearranged to $k^2 - k \leq 2(k-s)^2$.

    The odd case therefore gives $M_{d,k} \leq M_{d+1,k}$ if and only if
    $k^2 - k \leq 2(k-s)^2$, and again we prove this latter inequality
    from our initial conditions.  Using $2 \leq d \leq k/2$
    we obtain $s \leq (k+2)/4$, and so $k-s \geq (3k-2)/4 > 0$.
    From this we obtain
    \[ 2(k-s)^2 \geq 2\left(\frac{3k-2}{4}\right)^2
        = k^2 - k + \frac18(k-2)^2 \geq k^2 - k, \]
    and the result $M_{d,k} \leq M_{d+1,k}$ is established.
\end{proof}

We finish this section by studying sums of the form
$\sum_d \alpha^d M_{d,k}$; these sums reappear in
sections~\ref{s-quad} and~\ref{s-std} of this paper.
Our focus is on the asymptotic growth of these sums as
a function of $k$.  We approach this by first examining
the binomial coefficients $\binom{m-i}{i}$, and then returning to the
sums $\sum_d \alpha^d M_{d,k}$ in Corollary~\ref{c-m-sums}.

\begin{lemma} \label{l-ubt-binom-sums}
    For any integer $m \geq 0$ and any real $\alpha > 0$,
    define
    \[ S_\alpha(m) = \sum_{i=0}^{\lfloor m/2 \rfloor}
        \alpha^i \binom{m-i}{i}. \]
    Then $S_\alpha$ satisfies the recurrence relation
    $S_\alpha(m) = S_\alpha(m-1) + \alpha S_\alpha(m-2)$ for all $m \geq 2$,
    and the asymptotic growth rate of $S_\alpha$ relative to $m$ is
    \[ S_\alpha(m) \in \Theta\left( \left[
        \frac{1+\sqrt{1+4\alpha}}{2} \right]^m \right) .\]
\end{lemma}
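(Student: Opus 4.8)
The plan is to establish the recurrence first and then extract the asymptotic rate from it by standard linear-recurrence techniques. For the recurrence $S_\alpha(m) = S_\alpha(m-1) + \alpha S_\alpha(m-2)$, I would argue combinatorially (or by a direct index manipulation) using Pascal's rule on the binomial coefficients $\binom{m-i}{i}$. Writing $\binom{m-i}{i} = \binom{m-1-i}{i} + \binom{m-1-i}{i-1}$, the first summand reassembles into $S_\alpha(m-1)$ and the second, after reindexing $i \mapsto i+1$ and pulling out one factor of $\alpha$, reassembles into $\alpha S_\alpha(m-2)$. The only delicate point is bookkeeping at the endpoints of the summation ranges: the floor functions $\lfloor m/2\rfloor$, $\lfloor (m-1)/2\rfloor$, $\lfloor (m-2)/2\rfloor$ differ depending on the parity of $m$, and one must check that the ``extra'' or ``missing'' terms are exactly the ones whose binomial coefficients vanish under the paper's convention $\binom{a}{b}=0$ unless $0\le b\le a$. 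I would treat $m$ even and $m$ odd separately for this boundary check, and also note the base cases $S_\alpha(0)=1$ and $S_\alpha(1)=1$. This step is entirely routine but fiddly, and it is where I expect the main (though minor) obstacle to lie.

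Granting the recurrence, the asymptotics follow from the theory of constant-coefficient linear recurrences. The characteristic polynomial of $S_\alpha(m) = S_\alpha(m-1) + \alpha S_\alpha(m-2)$ is $x^2 - x - \alpha = 0$, whose roots are
\[ \lambda_\pm = \frac{1 \pm \sqrt{1+4\alpha}}{2}. \]
Since $\alpha > 0$, the discriminant $1+4\alpha$ is strictly positive, so the two roots are real and distinct, with $\lambda_+ = \tfrac{1+\sqrt{1+4\alpha}}{2} > 1 > 0$ and $\lambda_- = \tfrac{1-\sqrt{1+4\alpha}}{2} < 0$; moreover $|\lambda_-| < \lambda_+$ because $\lambda_+\lambda_- = -\alpha$ and $\lambda_+ > \sqrt{\alpha}$ (equivalently $1+\sqrt{1+4\alpha} > 2\sqrt\alpha$, which rearranges to $\sqrt{1+4\alpha} > 2\sqrt\alpha - 1$, true by squaring). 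Hence $S_\alpha(m) = c_+\lambda_+^m + c_-\lambda_-^m$ for constants $c_\pm$ determined by the base cases, and $S_\alpha(m) = c_+\lambda_+^m + O(|\lambda_-|^m)$ with $|\lambda_-|/\lambda_+ < 1$.

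To conclude $S_\alpha(m) \in \Theta(\lambda_+^m)$ it remains to check that the leading coefficient $c_+$ is strictly positive (so the dominant term does not cancel). This I would get either by solving the $2\times 2$ linear system from $S_\alpha(0)=S_\alpha(1)=1$ explicitly — one finds $c_+ = (\lambda_+ - 1 + 1)/(\lambda_+ - \lambda_-)$-type expression with a positive numerator and positive denominator $\lambda_+ - \lambda_- = \sqrt{1+4\alpha} > 0$ — or, more cleanly, by the soft observation that every term $\alpha^i\binom{m-i}{i}$ is non-negative and $S_\alpha(m) \ge \binom{m}{0} = 1 > 0$ for all $m$, which forces $c_+ > 0$ once we know $\lambda_+ > |\lambda_-|$ (if $c_+$ were $\le 0$ the sequence would be eventually non-positive). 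Either way the conclusion $S_\alpha(m) \in \Theta\bigl(\bigl[\tfrac{1+\sqrt{1+4\alpha}}{2}\bigr]^m\bigr)$ follows, completing the proof.
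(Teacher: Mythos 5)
Your proof is correct and follows essentially the same route as the paper: Pascal's rule to derive the recurrence, then the characteristic polynomial $x^2 - x - \alpha$ for the asymptotics. The paper sidesteps the endpoint bookkeeping you flag as the main obstacle by rewriting $S_\alpha(m) = \sum_{i\in\Z}\alpha^i\binom{m-i}{i}$ (all terms outside $0 \le i \le \lfloor m/2\rfloor$ vanish under the convention $\binom{a}{b}=0$), so the three reindexings go through uniformly with no parity split; and you are in fact more careful than the paper in verifying that the dominant coefficient is strictly positive, which the paper merely asserts.
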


\begin{proof}
    First we note that $S_\alpha(m)$ can be written as a sum over all
    $i \in \Z$, since $\binom{m-i}{i} = 0$ whenever $i < 0$ or
    $i > \lfloor m/2 \rfloor$.  Using the identity
    $\binom{m-i}{i} = \binom{m-i-1}{i}+\binom{m-i-1}{i-1}$, we have
    \begin{align*}
        S_\alpha(m) = \sum_{i\in\Z} \alpha^i \binom{m-i}{i}
                   &= \sum_{i\in\Z} \alpha^i \binom{m-i-1}{i} +
                      \sum_{i\in\Z} \alpha^i \binom{m-i-1}{i-1} \\
                   &= \sum_{i\in\Z} \alpha^i \binom{(m-1)-i}{i} +
                      \alpha \sum_{i\in\Z} \alpha^{i-1}
                      \binom{(m-2)-(i-1)}{i-1} \\
                   &= S_\alpha(m-1) + \alpha S_\alpha(m-2),
    \end{align*}
    thereby establishing our recurrence relation.

    The characteristic equation for this recurrence is
    $x^2 - x - \alpha = 0$, with roots
    $r_1 = \frac{1-\sqrt{1+4\alpha}}{2}$ and
    $r_2 = \frac{1+\sqrt{1+4\alpha}}{2}$;
    it is clear that $r_1 < 0 < r_2$ and $0 < |r_1| < |r_2|$.
    It follows that $S_\alpha(m) = c_1 r_1^m + c_2 r_2^m$ for some
    non-zero coefficients $c_1,c_2$ depending only on $\alpha$,
    and that the growth rate of $S_\alpha(m)$ relative to $m$ is therefore
    \[ S_\alpha(m) \in \Theta(r_2^m) = \Theta\left( \left[
        \frac{1+\sqrt{1+4\alpha}}{2} \right]^m \right) .\]
\end{proof}

\begin{corollary} \label{c-m-sums}
    For any real $\alpha$ in the range $0 < \alpha \leq 1$,
    consider the sum $\sum_{d=2}^{k-1} \alpha^d M_{d,k}$ as a function of
    the integer $k > 2$.
    This sum has an asymptotic growth rate of
    \[ \sum_{d=2}^{k-1} \alpha^d M_{d,k} \in
        \Theta\left( \left[
        \frac{1+\sqrt{1+4\alpha^2}}{2} \right]^k \right). \]
\end{corollary}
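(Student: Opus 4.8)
The plan is to reduce the sum $\sum_{d=2}^{k-1}\alpha^d M_{d,k}$ to a sum of the shape $S_\beta(m)$ handled by Lemma~\ref{l-ubt-binom-sums}, after an appropriate change of summation variable. First I would split the sum according to the parity of $d$ and substitute the expression~(\ref{eqn-m}) for $M_{d,k}$. Writing $d=2s$ in the even case, the term $\alpha^{2s}\big[\binom{k-s}{s}+\binom{k-s-1}{s-1}\big]$ has binomial coefficients of the form $\binom{m-i}{i}$ with $i=s$ and $m$ equal to $k$ or $k-2$ respectively, while the factor $\alpha^{2s}=(\alpha^2)^s$ is exactly the weight appearing in $S_{\alpha^2}$. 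Similarly, writing $d=2s-1$ in the odd case, the term $2\alpha^{2s-1}\binom{k-s}{s-1}$ contributes a binomial $\binom{m-i}{i}$ with $i=s-1$, $m=k-1$, and weight $(\alpha^2)^{s-1}$ up to a bounded factor $2/\alpha$. Collecting everything, I expect to obtain something of the form
\[
\sum_{d=2}^{k-1}\alpha^d M_{d,k}
= c_1 S_{\alpha^2}(k) + c_2 S_{\alpha^2}(k-1) + c_3 S_{\alpha^2}(k-2) + (\text{lower-order correction}),
\]
where $c_1,c_2,c_3$ are constants depending only on $\alpha$ (including the factor $2/\alpha$ from the odd terms), and the correction accounts for the finitely many boundary terms ($d=2$, $d=k-1$) that are not captured cleanly by the substitution or by extending sums over all of $\Z$.

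Once this identity is in place, the asymptotics are immediate. By Lemma~\ref{l-ubt-binom-sums} with parameter $\alpha^2$ in place of $\alpha$, each $S_{\alpha^2}(k-j)$ grows as $\Theta\big(r_2^{\,k}\big)$ where $r_2 = \frac{1+\sqrt{1+4\alpha^2}}{2}$; since $r_2>1$ (because $\alpha>0$) the shifts by $j\in\{0,1,2\}$ only change the implied constant, so the whole expression is $\Theta\big(r_2^{\,k}\big)$, which is precisely the claimed bound. The hypothesis $0<\alpha\le 1$ is used to guarantee the constants $c_i$ are positive (so there is no cancellation destroying the lower bound) and to keep $\alpha^d$ from introducing a competing exponential in $d$; I would remark that the upper bound $\alpha\le 1$ ensures $\alpha^d$ is a decreasing weight, so the dominant contribution genuinely comes from the binomial-growth mechanism of $S_{\alpha^2}$ rather than from large $d$.

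The main obstacle I anticipate is purely bookkeeping: matching the index ranges. In Lemma~\ref{l-ubt-binom-sums}, $S_\beta(m)$ sums $i$ from $0$ to $\lfloor m/2\rfloor$, whereas here $s$ ranges over values dictated by $2\le d\le k-1$, and the two secondary binomials ($\binom{k-s-1}{s-1}$ in the even case) have a shifted upper limit; I need to check that extending each sum to all $i\in\Z$ (as the lemma's proof does, using $\binom{a}{b}=0$ outside the valid range, per the paper's convention) introduces no spurious terms and drops no genuine ones, apart from the explicitly identified $O(1)$-many endpoint terms. A secondary subtlety is the odd-case factor $2\alpha^{2s-1}$: the $1/\alpha$ is harmless since $\alpha$ is a fixed positive constant, but I should state explicitly that it is absorbed into $c_2$ (or whichever constant), and confirm it does not blow up as—it cannot, $\alpha$ being fixed. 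With those index alignments verified, the rest is a one-line appeal to Lemma~\ref{l-ubt-binom-sums}.
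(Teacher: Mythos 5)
Your proposal follows essentially the same route as the paper: split by parity, substitute equation~(\ref{eqn-m}), reindex to identify each piece with $S_{\alpha^2}(k)$, $S_{\alpha^2}(k-1)$, $S_{\alpha^2}(k-2)$, track the $O(1)$ boundary discrepancies from extending to all $i\in\Z$, and then invoke Lemma~\ref{l-ubt-binom-sums}. One small arithmetic slip: since $2\alpha^{2s-1} = 2\alpha\cdot(\alpha^2)^{s-1}$, the odd-case constant is $2\alpha$ rather than $2/\alpha$, but as you note this is a fixed positive constant and does not affect the $\Theta$ conclusion.
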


\begin{proof}
    Using equation~(\ref{eqn-m}) and setting $d=2i$ or $d=2i-1$ for
    even or odd $d$ respectively, we obtain the following identity:
    \begin{align*}
        \sum_{d=2}^{k-1} \alpha^d M_{d,k} \quad = \quad&
            \sum_{\stackrel{2 \leq d < k}{d\ \textrm{even}}} \alpha^d \left[
                \binom{k-\frac{d}{2}}{\frac{d}{2}} +
                \binom{k-\frac{d}{2}-1}{\frac{d}{2}-1} \right]
            +
            2 \sum_{\stackrel{3 \leq d < k}{d\ \textrm{odd}}} \alpha^d
                \binom{k-\frac{d+1}{2}}{\frac{d+1}{2}-1} \\
        =\quad&
            \sum_{i = 1}^{\lfloor (k-1)/2 \rfloor}
                \alpha^{2i} \binom{k-i}{i} +
            \sum_{i = 1}^{\lfloor (k-1)/2 \rfloor}
                \alpha^{2i} \binom{k-i-1}{i-1} +
            2 \sum_{i=2}^{\lfloor k/2 \rfloor}
                \alpha^{2i-1} \binom{k-i}{i-1} \\
        =\quad&
            \sum_{i \in \Z}
                (\alpha^2)^i \binom{k-i}{i} - 1
                - \{ \alpha^k\ \mbox{if $k$ is even} \} \\
        & + \alpha^2 \sum_{i \in \Z}
                (\alpha^2)^{i-1} \binom{(k-2)-(i-1)}{i-1}
                - \{ \alpha^k\ \mbox{if $k$ is even} \} \\
        & + 2 \alpha \sum_{i \in \Z}
                (\alpha^2)^{i-1} \binom{(k-1)-(i-1)}{i-1}
                - 2 \alpha
                - \{ 2 \alpha^k\ \mbox{if $k$ is odd} \} \\
        =\quad&
            S_{\alpha^2}(k) + \alpha^2 S_{\alpha^2}(k-2) +
            2\alpha S_{\alpha^2}(k-1) - 2 \alpha^k - 2 \alpha - 1,
    \end{align*}
    where $S_{\alpha^2}(\cdot)$ is the function defined earlier
    in Lemma~\ref{l-ubt-binom-sums} (though note that the subscript
    is now squared).
    Because each $S_{\alpha^2}(k)$ is non-negative and $|\alpha| \leq 1$,
    it follows immediately from Lemma~\ref{l-ubt-binom-sums} that
    \[ \sum_{d=2}^{k-1} \alpha^d M_{d,k} \in
        \Theta\left( \left[
        \frac{1+\sqrt{1+4\alpha^2}}{2} \right]^k \right). \]
\end{proof}

\section{The quadrilateral solution set} \label{s-quad}

In this section we combine the structural results of
Section~\ref{s-lattice} with the asymptotic bounds of
Section~\ref{s-ubt} to yield our first main result:
a new bound on the size of the quadrilateral solution set.

Recall that the \emph{quadrilateral solution set} is the set of all
admissible vertices of the quadrilateral projective solution space $\qproj$.
Little is currently known about the size of this set; the only
theoretical bound to date is $4^n$, as outlined in Section~\ref{s-prelim-tri}.

In this paper we employ more sophisticated techniques to bring this
bound down to approximately $O(3.303^n)$.  Our broad strategy is as
follows.  We first bound the number of maximal admissible faces of
each dimension; in particular, we show that there are at most
$3^{n-1-d}$ maximal admissible faces of each dimension $d \leq n-1$,
and no maximal admissible faces of any dimension $d \geq n$.
We then convert these results into a bound on the number of
admissible vertices using McMullen's theorem and the asymptotic
results of Section~\ref{s-ubt}.

Throughout this section we denote the coordinates of a vector
$\mathbf{x} \in \R^{3n}$ by
\[ \mathbf{x} = ( x_{1,1},x_{1,2},x_{1,3},\ x_{2,1},x_{2,2},x_{2,3},
    \ \ldots,\ x_{n,1},x_{n,2},x_{n,3} ), \]
where $x_{i,j}$ is the coordinate representing the $j$th
quadrilateral type within the $i$th tetrahedron.
We also make repeated use of the tetrahedral
solutions $\tet{1},\ldots,\tet{n} \in \qcone$; recall from
Section~\ref{s-prelim-tri} that the
$k$th tetrahedral solution $\tet{k}$ has
$\tet{k}_{k,1}=\tet{k}_{k,2} = \tet{k}_{k,3} = 1$ and all $(3n-3)$
remaining coordinates set to zero.

\begin{lemma} \label{l-quad-max-dim}
    Every admissible face of the quadrilateral projective
    solution space has dimension $\leq n-1$.
\end{lemma}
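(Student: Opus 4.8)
The plan is to exploit the tight link between the dimension of an admissible face and the number of quadrilateral coordinates that its points are forced to keep zero. By Lemma~\ref{l-adm-face-to-compatible}, every two points of an admissible face $F$ of $\qproj$ are compatible, so for each of the $n$ tetrahedra at least two of the three coordinates $x_{i,1},x_{i,2},x_{i,3}$ vanish identically on $F$. Hence $F$ lies in the intersection of $\qproj$ with $2n$ coordinate hyperplanes $x_k=0$. The key point is that this puts $F$ inside a very low-dimensional linear slice of the ambient $\qcone$, and I want to show that slice has dimension at most $n$, so that $F$ (a polytope inside it, but also inside the projective hyperplane $\sum x_{i,j}=1$) has dimension at most $n-1$.

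First I would set up the count. Working in the cone picture is cleanest: by Lemma~\ref{l-proj-cone} it suffices to bound the dimension of admissible faces of $\qcone$ by $n$, since coning adds exactly one to the dimension. An admissible face $\cone{F}$ of $\qcone$ lives in the intersection $W$ of the solution space of the quadrilateral matching equations with the $2n$ coordinate hyperplanes just described. By Theorem~\ref{t-matching-dim}, the matching-equation solution space has dimension exactly $2n$. Each coordinate hyperplane $x_k=0$ cuts dimension by at most one, so a priori $\dim W \geq 2n - 2n = 0$ — that bound is too weak on its own. The improvement must come from the \emph{tetrahedral solutions}: for each tetrahedron $i$, the two hyperplanes $x_{i,j}=0$ that are active on $F$ both contain all tetrahedral solutions $\tet{1},\dots,\tet{n}$ except $\tet{i}$, and in fact they cut $\tet{i}$ itself (which has all three coordinates of tetrahedron $i$ equal to $1$). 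So I would argue that the two active hyperplanes for tetrahedron $i$ together impose at least one genuinely new linear condition beyond what the matching equations and the other tetrahedra's hyperplanes give — intuitively, one of the two "costs" a dimension but the structure forces the pair across all $n$ tetrahedra to cut a total of at least $n$ dimensions off the $2n$-dimensional matching space, leaving $\dim W \leq n$.

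The technical heart — and the main obstacle — is making that "at least $n$ dimensions are genuinely cut" claim precise, i.e.\ showing the $2n$ functionals $x_{i,j}$ (two per tetrahedron) have rank at least $n$ when restricted to the $2n$-dimensional matching-equation subspace $U$. I would approach this by producing, inside $U$, an $n$-dimensional subspace on which these functionals are "coordinate-like": the span of the tetrahedral solutions $\tet{1},\dots,\tet{n}$ is $n$-dimensional (they are clearly linearly independent, being supported on disjoint coordinate blocks) and lies in $U$ by the remarks in Section~\ref{s-prelim-tri}. On $\tet{i}$, all three coordinates of tetrahedron $i$ equal $1$, so at least one of the two active functionals $x_{i,j}$ for tetrahedron $i$ is nonzero there while all active functionals for tetrahedra $i'\neq i$ vanish on $\tet{i}$; choosing one such functional per tetrahedron gives $n$ functionals that are "triangular" with respect to the basis $\tet{1},\dots,\tet{n}$ of this subspace, hence linearly independent on $U$. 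Therefore the full set of $2n$ active functionals has rank at least $n$ on $U$, so $W = U \cap \bigcap\{x_k=0\}$ has $\dim W \leq 2n - n = n$. Since $\cone{F} \subseteq W$ we get $\dim \cone{F} \leq n$, and so $\dim F \leq n-1$ in $\qproj$ by Lemma~\ref{l-proj-cone}. The remaining details to nail down are purely bookkeeping: checking that the projective hyperplane $\sum x_{i,j}=1$ does not need to be invoked separately (it is automatic once we are in the cone), and confirming that "at least one of the two active functionals for tetrahedron $i$ is nonzero on $\tet{i}$" — which holds because a functional active on $F$ is one of $x_{i,1},x_{i,2},x_{i,3}$, each of which evaluates to $1$ on $\tet{i}$.
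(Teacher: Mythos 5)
Your proof is correct, but it is substantially more elaborate than the paper's argument and overlooks a shortcut. The paper's proof of this lemma is a one-liner: since for every tetrahedron two of the three coordinates vanish identically on $\cone{F}$, the cone face lies inside a coordinate subspace $S \subseteq \R^{3n}$ obtained by setting $2n$ coordinates to zero. Such a subspace has dimension exactly $3n - 2n = n$, full stop, so $\dim \cone{F} \leq n$ and $\dim F \leq n-1$. There is no need to invoke Theorem~\ref{t-matching-dim}, the linear independence of the tetrahedral solutions, or any rank argument about functionals restricted to the matching-equations subspace~$U$. Your remark that ``each hyperplane cuts dimension by at most one, so a priori $\dim W \geq 0$, which is too weak'' conflates a lower bound with the upper bound you actually need; the upper bound $\dim W \leq \dim S = n$ is immediate from $W \subseteq S$ and does not depend on how the hyperplanes interact with $U$.

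That said, the machinery you deploy (tracking how tetrahedral solutions force genuine dimension drops inside the matching-equations subspace) is precisely the technique the paper uses for the \emph{harder} counting result, Lemma~\ref{l-quad-bound-dim}, where a naive coordinate-subspace count is insufficient because one must simultaneously control which of the $3^{n-1-d}$ possible coordinate subspaces can host maximal admissible faces. So you have, in effect, rediscovered the key tool for that later lemma and applied it here, where it works but is overkill. Your argument is sound; it just trades a two-sentence proof for a page.
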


\begin{proof}
    Let $F$ be some $d$-dimensional admissible face of the quadrilateral
    projective solution space $\qproj$, and let $\cone{F}$ be the corresponding
    $(d+1)$-dimensional admissible face of the quadrilateral
    solution cone $\qcone$.
    Every pair of points in $\cone{F}$ must be compatible
    (Lemma~\ref{l-adm-face-to-compatible}),
    and so for each $i=1,\ldots,n$ at least two
    of the three coordinates $x_{i,1},x_{i,2},x_{i,3}$ must be
    simultaneously zero for \emph{all} points $\mathbf{x} \in \cone{F}$.

    It follows that the entire face $\cone{F}$ lies within some $n$-dimensional
    subspace $S \subseteq \R^{3n}$ defined by setting $2n$ coordinates
    equal to zero.  We therefore have $\dim \cone{F} \leq \dim S$;
    that is, $d+1 \leq n$, or $d \leq n-1$.
\end{proof}

\begin{lemma} \label{l-quad-bound-dim}
    For each $d \in \{0,\ldots,n-1\}$,
    the number of maximal admissible faces of dimension $d$ in the
    quadrilateral projective solution space is at most $3^{n-1-d}$.
\end{lemma}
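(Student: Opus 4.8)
The plan is to induct on $n$, the number of tetrahedra, exploiting the tetrahedral solutions $\tet{1},\dots,\tet{n}$ to peel off one tetrahedron at a time. The base case $n=1$ is immediate: there is only one tetrahedron, the quadrilateral constraints leave at most the zero vector (together with possibly some vertices of $\qproj$), and in any case $d$ ranges only over $\{0\}$ with bound $3^{0}=1$, which holds since $\qproj\subseteq\R^3$ is at most $0$-dimensional after imposing compatibility, so there is at most one maximal admissible face. For the inductive step I would fix a maximal admissible face $F$ of dimension $d$ in $\qproj$ and look at the corresponding admissible face $\cone{F}$ of $\qcone$. By Lemma~\ref{l-adm-face-to-compatible}, for every tetrahedron $i$ at least two of the three coordinates $x_{i,1},x_{i,2},x_{i,3}$ vanish identically on $\cone{F}$; so to each maximal admissible face we may associate a function assigning to each $i\in\{1,\dots,n\}$ one of (at most) three ``states'': either ``$x_{i,1}$ is the surviving coordinate'', ``$x_{i,2}$ survives'', or ``$x_{i,3}$ survives'' (if all three vanish, pick any one consistently, say the first). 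This gives a map from maximal admissible faces of dimension $d$ into $\{1,2,3\}^n$, but this only yields $3^n$, not $3^{n-1-d}$, so the map is far from injective and the real work is in counting fibres.

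The key observation to drive the induction is the following. Suppose $F$ has dimension $d\geq 1$. Then $\cone{F}$ has dimension $d+1\geq 2$, so it is genuinely more than a ray, and I claim there must be some tetrahedron $k$ for which the single surviving coordinate $x_{k,j}$ is \emph{also} identically zero on $\cone{F}$ — equivalently, $\cone{F}$ lies in strictly fewer than $n$ ``essential'' coordinate directions. Indeed, $\cone{F}$ lies in the $n$-dimensional coordinate subspace $S$ cut out by the $2n$ vanishing coordinates (as in Lemma~\ref{l-quad-max-dim}); if $\cone{F}$ projected onto a full-dimensional subset in all $n$ surviving directions it would have to interact with the quadrilateral matching equations in a way I would need to rule out using the dimension count $\dim\cone{F}=d+1\le n$ together with the fact that $F$ is \emph{maximal}. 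The cleanest route is: let $Z$ be the set of tetrahedra $k$ for which all three coordinates $x_{k,\cdot}$ vanish on $\cone{F}$, and let $m=n-|Z|$ be the number of ``active'' tetrahedra. Then $\cone{F}$ lives in $\R^{3m}$ effectively, and maximality of $F$ forces $\cone{F}$ to be a maximal admissible face there too. If I can show $d+1\le m$ is not tight unless $m=n$, or more precisely relate $d$ and $m$, I can recurse on the $m$-tetrahedron subconfiguration.

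The obstacle, and where I expect the argument to need care, is that deleting tetrahedra from a triangulation does not yield another 3-manifold triangulation, so ``induction on $n$'' cannot literally mean induction over triangulations — I must instead induct over the combinatorial gadget (the matching-equation polytope restricted to a coordinate subspace), keeping track only of the linear-algebraic data. The right framing is probably: prove the statement for an arbitrary pointed polyhedral cone $C\subseteq\R^{3n}$ cut out by non-negativity and \emph{some} linear equations, for which every admissible face has dimension $\le n-1$ (the hypothesis supplied by Lemma~\ref{l-quad-max-dim}, which itself only uses compatibility, not the specific matching equations). Then a maximal admissible face of dimension $d$, after recording which quadrilateral type survives in each tetrahedron and then restricting to the at most $n-1-d$ tetrahedra whose surviving coordinate is not forced to a specific value, should reduce to counting within a configuration of $n-1-d$ ``free'' tetrahedra — each contributing a factor of $3$ — while the remaining $1+d$ tetrahedra are pinned down by the face. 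Making the bookkeeping precise (in particular, showing exactly $1+d$ tetrahedra get ``used up'' by a $d$-dimensional maximal face, via the dimension formula $d+1\le$ number of active tetrahedra combined with maximality) is the heart of the proof; everything else is routine.
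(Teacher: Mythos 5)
Your intuitions about the ingredients are right — the tetrahedral solutions $\tet{i}$, the fact that compatibility forces two of the three coordinates per tetrahedron to vanish identically on any admissible face, the idea of a ``three choices per tetrahedron'' count, and some dimension-versus-count tradeoff — but the argument as sketched has a genuine gap, and one of its proposed foundations is actually false. The paper's proof does not induct on $n$ by peeling off tetrahedra; it fixes more than $3^{n-1-d}$ hypothetical maximal admissible $(d+1)$-faces of $\qcone$ and constructs, within the fixed ambient $\R^{3n}$, a chain of subspaces $S_0 \supset S_1 \supset \cdots \supset S_n$ starting from the matching-equation solution space. At step $i$ one looks at the surviving faces and at the two (or more) of the three coordinate hyperplanes for tetrahedron $i$ that each of them lies in: if they all agree, intersect with those two hyperplanes and note that the tetrahedral solution $\tet{i}$ is killed, so the dimension drops by at least one while no faces are lost; if they disagree, split the faces into three groups by which pair of hyperplanes they occupy, keep the largest group (at least a third), and show the dimension drops by at least two because both $\tet{i}$ and a disagreeing face witness strict inclusions. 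Bookkeeping these two cases produces the bound: at the end $S_n$ has dimension at most $n - t_n$ where $t_n$ counts disagreement steps, and because $S_n$ must still hold at least one $(d+1)$-dimensional face we get $t_n \le n-1-d$, hence strictly more than one face remains in a subspace whose points are all pairwise compatible — contradicting Corollary~\ref{c-maxadm-vertices}. None of this peels off tetrahedra or recurses on a smaller configuration.

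Two concrete problems with your version. First, your ``key observation'' — that whenever $d\ge 1$ some surviving coordinate must also vanish on $\cone{F}$ — is unjustified and is not true in general; nothing forces a $(d+1)$-dimensional admissible face to occupy more than $n - (d+1)$ coordinate hyperplanes beyond the $2n$ forced by compatibility. Second, and more seriously, the proposed general framing (``prove it for an arbitrary pointed cone cut out by non-negativity and \emph{some} linear equations, for which every admissible face has dimension $\le n-1$'') is false. Take the non-negative orthant $\R^{3n}_{\ge 0}$ with no equations at all: its projective cross-section is the simplex $\Delta^{3n-1}$, every admissible face does have dimension at most $n-1$, yet there are exactly $3^n$ maximal admissible faces of dimension $n-1$ (one for each choice of surviving quadrilateral type per tetrahedron), vastly exceeding the claimed bound of $3^{n-1-(n-1)}=1$. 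The lemma relies crucially on Tillmann's Theorem~\ref{t-matching-dim} that the matching-equation solution space has dimension exactly $2n$ — this is precisely what sets $\dim S_0 = 2n$ in the paper's chain and makes the final dimension count close. Your sketch never invokes this, and you yourself flag that the ``heart of the proof'' (showing $d+1$ tetrahedra get used up) is still missing; that missing heart is exactly the subspace-chain argument, and the matching-equation dimension is indispensable to it.
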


\begin{proof}
    Let $F_1,\ldots,F_k$ be distinct maximal admissible $d$-faces
    within the quadrilateral projective solution space
    $\qproj$, where $k > 3^{n-1-d}$.  For convenience we work in
    the quadrilateral solution cone $\qcone$ instead,
    using the corresponding maximal
    admissible faces $\cone{F_1},\ldots,\cone{F_k}$ each of dimension $d+1$.

    Our strategy is to construct a decreasing sequence of linear subspaces
    $\R^{3n} \supset S_0 \supset S_1 \supset \ldots \supset S_n$
    with the following properties:
    \begin{enumerate}[(i)]
        \item \label{en-seq-tet}
        Each subspace $S_i$ contains all of the tetrahedral solutions
        $\tet{i+1},\ldots,\tet{n}$.

        \item \label{en-seq-size}
        For each subspace $S_i$, there is some integer $t_i \geq 0$
        for which $S_i$ has dimension $\leq 2n-i-t_i$, and for which
        $S_i$ contains strictly more than $3^{n-1-d-t_i}$
        of the maximal admissible faces $\cone{F_1},\ldots,\cone{F_k}$.

        \item \label{en-seq-compat}
        For each subspace $S_i$ and each integer
        $j=1,\ldots,i$, the subspace $S_i$ is contained in at
        least two of the three hyperplanes
        $x_{j,1} = 0$, $x_{j,2} = 0$ and $x_{j,3} = 0$.
        In other words, for each of the first $i$ tetrahedra,
        at least two of the three corresponding quadrilateral
        coordinates are simultaneously zero for all points in $S_i$.
    \end{enumerate}
    We construct this sequence inductively as follows:
    \begin{itemize}
        \item We set the initial subspace $S_0$ to be the solution space
        to the quadrilateral matching equations.
        Property~(\ref{en-seq-tet}) holds because
        $\tet{1},\ldots,\tet{n} \in \qcone \subseteq S_0$.
        Property~(\ref{en-seq-size}) holds with $t_0 = 0$,
        since we have $\dim S_0 = 2n$ from Theorem~\ref{t-matching-dim},
        and since all $k > 3^{n-1-d}$ of our maximal admissible faces
        are contained within $\qcone \subseteq S_0$.
        Property~(\ref{en-seq-compat}) is vacuously satisfied for $i=0$.

        \item For each $i > 0$, we construct $S_i$ from $S_{i-1}$ as
        follows.  Let $X = \{ \cone{F_j}\,|\,\cone{F_j} \subseteq S_{i-1}\}$;
        that is, the set of all maximal admissible faces from our
        original collection that are contained within the previous
        subspace $S_{i-1}$.
        Because each $\cone{F_j}$ is an admissible face, we know
        from Lemma~\ref{l-adm-face-to-compatible}
        that each $\cone{F_j}$ lies in at least two (and possibly all three)
        of the hyperplanes $x_{i,1} = 0$, $x_{i,2} = 0$ and $x_{i,3} = 0$
        (though \emph{which} of these hyperplanes $\cone{F_j}$ belongs
        to will typically depend on $j$).
        Consider the following two cases:
        \begin{enumerate}[(a)]
            \item \label{en-case-same}
            Suppose that all $\cone{F_j} \in X$ are
            \emph{simultaneously} contained in at least two of the three
            hyperplanes $x_{i,1} = 0$, $x_{i,2} = 0$ and $x_{i,3} = 0$;
            that is, this choice does not depend on $j$.
            Without loss of generality, let these two hyperplanes be
            $x_{i,2} = 0$ and $x_{i,3} = 0$.

            In this case we let $S_i$ be the intersection of the
            subspace $S_{i-1}$ with the hyperplanes
            $x_{i,2} = 0$ and $x_{i,3} = 0$.
            Note that every face $\cone{F_j} \in X$ belongs to
            the subspace $S_i$ as a result.

            Property~(\ref{en-seq-tet}) holds for $S_i$ because each of the
            tetrahedral solutions $\tet{i+1},\ldots,\tet{n}$ belongs to
            $S_{i-1}$ as well as all three hyperplanes
            $x_{i,1} = 0$, $x_{i,2} = 0$ and $x_{i,3} = 0$.
            Property~(\ref{en-seq-compat}) for $S_i$ follows immediately from
            our construction.

            Property~(\ref{en-seq-size}) for $S_i$ is established as follows.
            Let $t_i = t_{i-1}$.
            We note that $S_i$ is a \emph{strict} subspace of $S_{i-1}$,
            because the tetrahedral solution $\tet{i}$ lies in $S_{i-1}$
            (from property~(\ref{en-seq-tet}) for $S_{i-1}$)
            but not $S_i$ (because $\tet{i}_{i,2},\tet{i}_{i,3} \neq 0$).
            It follows that $\dim S_i \leq \dim S_{i-1} - 1 \leq
            2n-(i-1)-t_{i-1}-1 = 2n-i-t_i$.
            Furthermore, our construction ensures that every face
            $\cone{F_j} \in X$ lies within $S_i$,
            and using property~(\ref{en-seq-size}) for $S_{i-1}$
            there are strictly more than
            $3^{n-1-d-t_{i-1}} = 3^{n-1-d-t_i}$ such faces.

            % Put a small gap between the two cases, to help break up
            % these sections of the proof (the inner itemized list
            % leaves very little space by default).
            \smallskip

            \item Otherwise, all $\cone{F_j} \in X$ are
            not simultaneously contained in at least two of the three
            hyperplanes $x_{i,1} = 0$, $x_{i,2} = 0$ and $x_{i,3} = 0$.
            Consider the three sets
            \begin{align*}
                X_1 &= \{ \cone{F_j} \in X\,\left|
                    \ \cone{F_j}\ \mbox{lies in both hyperplanes}
                    \ x_{i,2}=0,\ x_{i,3}=0 \right.\}; \\
                X_2 &= \{ \cone{F_j} \in X\,\left|
                    \ \cone{F_j}\ \mbox{lies in both hyperplanes}
                    \ x_{i,3}=0,\ x_{i,1}=0 \right.\}; \\
                X_3 &= \{ \cone{F_j} \in X\,\left|
                    \ \cone{F_j}\ \mbox{lies in both hyperplanes}
                    \ x_{i,1}=0,\ x_{i,2}=0 \right.\}.
            \end{align*}
            We know from our earlier comments that $X = X_1 \cup X_2 \cup X_3$.
            Without loss of generality suppose that $X_1$ is the largest
            of these three sets; in particular, $|X_1| \geq |X| / 3$.

            For this case we define $S_i$ to be the intersection of the
            subspace $S_{i-1}$ and the two hyperplanes
            $x_{i,2}=0$ and $x_{i,3}=0$.  Note that the faces
            $\cone{F}_j$ that lie within $S_i$ are precisely those in
            the set $X_1$.

            Once again properties~(\ref{en-seq-tet}) and~(\ref{en-seq-compat})
            for $S_i$ are simple consequences of our construction.
            To establish property~(\ref{en-seq-size}) for $S_i$, we let
            $t_i = t_{i-1} + 1$.  The number of faces $\cone{F_j}$ in $S_i$
            is $|X_1| \geq |X|/3 > 3^{n-1-d-t_{i-1}}/3 = 3^{n-1-d-t_i}$
            as required.  Bounding the dimension of $S_i$ requires a
            little more work.

            We know that there is some face $\cone{F_a} \in X$ that is
            not in the set $X_1$ (otherwise we would have fallen back to
            case~(\ref{en-case-same})).
            However, this face $\cone{F_a}$ must belong
            to one of $X_1$, $X_2$ or $X_3$; without loss of generality
            suppose that $\cone{F_a} \in X_2$.  Let $S_i'$ be the
            intersection of the subspace $S_{i-1}$ with the hyperplane
            $x_{i,3}=0$.  Because $\tet{i} \in S_{i-1}$ but
            $\tet{i}_{i,3} \neq 0$ it follows that $S_i'$ is a strict
            subspace of $S_{i-1}$, and we have
            $\dim S_i' \leq \dim S_{i-1} - 1$.

            Now we find that $S_i$ is the intersection of $S_i'$ with the
            hyperplane $x_{i,2}=0$.  The face $\cone{F_a}$
            lies within the hyperplane $x_{i,3}=0$ and therefore lies
            in $S_i'$; however, because
            $\cone{F_a} \notin X_1$ it cannot also lie in the hyperplane
            $x_{i,2}=0$, which means that $\cone{F_a}$ does not lie in $S_i$.
            Therefore $S_i$ is a strict subspace of $S_i'$, and we have
            $\dim S_i \leq \dim S_i' - 1 \leq \dim S_{i-1} - 2$,
            giving a final dimension
            $\dim S_i \leq 2n-(i-1)-t_{i-1}-2 = 2n-i-t_i$.
        \end{enumerate}
    \end{itemize}

    This establishes properties~(\ref{en-seq-tet})--(\ref{en-seq-compat})
    for our sequence of linear subspaces
    $\R^{3n} \supset S_0 \supset S_1 \supset \ldots \supset S_n$.
    We finish our proof by considering the final subspace $S_n$.

    From property~(\ref{en-seq-size}) we know that $S_n$ contains at least
    one of the maximal admissible faces $\cone{F_1},\ldots,\cone{F_k}$,
    and so $\dim S_n \geq d+1$.  The dimension
    constraint of property~(\ref{en-seq-size}) then gives $t_n \leq n-1-d$,
    whereupon we find that $S_n$ contains
    strictly more than $3^{n-1-d-t_n} \geq 1$ of the maximal admissible faces
    $\cone{F_1},\ldots,\cone{F_k}$.  That is, $S_n$ must contain
    \emph{at least two} of these faces.  Let these faces be
    $\cone{F_a}$ and $\cone{F_b}$.

    By property~(\ref{en-seq-compat}) we know that all points in
    $S_n$ are pairwise compatible, and so every point in $\cone{F_a}$ must be
    compatible with every point in $\cone{F_b}$.  However, from
    Corollary~\ref{c-maxadm-vertices} we know that $\cone{F_a}$ and
    $\cone{F_b}$ are each maximal sets of admissible
    and pairwise compatible points in $\qcone$, giving
    $\cone{F_a} = \cone{F_b}$ and a contradiction.
\end{proof}

This bound of $\leq 3^{n-1-d}$ maximal admissible faces of dimension $d$
appears to be tight for large dimensions $d$ (in particular, for
$d \geq \frac{n}{2}-1$ as we discuss in Section~\ref{s-discussion}).
Nevertheless, even for large dimensions this not the entire
story.  We might be able to achieve equality for some large
dimensions $d$, but we cannot achieve equality for \emph{all} large dimensions
simultaneously, as indicated by the following result.

\begin{lemma} \label{l-quad-simultaneous}
    If the quadrilateral projective solution space has a maximal
    admissible face of dimension $n-1$, then this is the \emph{only}
    maximal admissible face (of any dimension).
\end{lemma}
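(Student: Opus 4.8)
The plan is to suppose $F$ is a maximal admissible face of dimension $n-1$ and show that every other maximal admissible face $G$ coincides with $F$. By Corollary~\ref{c-maxadm-vertices}, it suffices to show that every admissible point is compatible with every point of $F$; equivalently (passing to the cones via Lemma~\ref{l-proj-cone}), I want to show that $\cone{F}$ — an $n$-dimensional admissible face of $\qcone$ — already ``uses up'' all the quadrilateral freedom available, so that no admissible point outside $\cone{F}$ can fail to collide with it. The natural way to make this precise is a dimension count. Since $\cone{F}$ is admissible, Lemma~\ref{l-adm-face-to-compatible} tells us that for each tetrahedron $i$ at least two of the three coordinates $x_{i,1},x_{i,2},x_{i,3}$ vanish identically on $\cone{F}$; this forces $\cone{F}$ into a subspace of dimension at most $n$. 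Since $\dim\cone{F} = n$ exactly, equality must hold throughout: for each $i$ exactly two of the three coordinates vanish on $\cone{F}$, the third is not identically zero, and $\cone{F}$ spans the entire $n$-dimensional coordinate subspace it lives in.

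The key step is then the following. For each tetrahedron $i$ let $c(i) \in \{1,2,3\}$ be the unique quadrilateral type that is not identically zero on $\cone{F}$. I claim that $c$ records ``which quadrilateral type $\cone{F}$ commits to'' in a way that every admissible point must respect. Indeed, take any admissible $G$ (a maximal admissible face of $\qproj$) and any point $\mathbf{y} \in \cone{G}$. Suppose $\mathbf{y}$ had a nonzero coordinate $y_{i,j}$ with $j \neq c(i)$. Because $\cone{F}$ spans its $n$-dimensional subspace and $x_{i,c(i)}$ is not identically zero there, $\cone{F}$ contains a point $\mathbf{x}$ with $x_{i,c(i)} > 0$ (using that coordinates on a cone in the nonnegative orthant are nonnegative, and a coordinate that is not identically zero on a full-dimensional cone must be strictly positive somewhere). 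Then $\mathbf{x}$ and $\mathbf{y}$ are incompatible, since tetrahedron $i$ sees two distinct nonzero quadrilateral types across the pair. This contradicts... nothing yet — so the real work is to turn incompatibility of a single pair into the conclusion $G = F$. For that I invoke Corollary~\ref{c-maxadm-vertices}: it suffices to show $\cone{G}$ is pairwise compatible with $\cone{F}$, because then $\cone{F} \cup \cone{G}$ is a set of pairwise compatible admissible points, hence contained in a common maximal admissible face, which by maximality of both must equal each of them. So the argument is: assume some $\mathbf{y} \in \cone{G}$ is incompatible with some $\mathbf{x} \in \cone{F}$; the incompatibility occurs at some tetrahedron $i$ where $\mathbf{y}$ has a nonzero coordinate $y_{i,j}$, $j \neq c(i)$, and $\mathbf{x}$ has $x_{i,c(i)} > 0$ — but I must also rule out the case $\mathbf{y}$ is nonzero only in $x_{i,c(i)}$ and $\mathbf{x}$ nonzero in a different type, which is impossible since $c(i)$ is the only type active on $\cone{F}$. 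Hence any incompatibility forces a coordinate $y_{i,j} > 0$ with $j \neq c(i)$, and I then derive a contradiction by building a single admissible point that is incompatible with $\cone{F}$ but should lie in a maximal admissible face extending $\cone{F}$ — wait, more directly: I show directly that $\mathbf{y}$ is compatible with all of $\cone{F}$ by the spanning argument. If $\mathbf{y}$ has every off-$c(i)$ quadrilateral coordinate zero, then $\mathbf{y}$ only ever activates type $c(i)$ in tetrahedron $i$, so $\mathbf{y}$ is compatible with $\cone{F}$ (which also only activates $c(i)$ there). Contrapositively, incompatibility of $\mathbf{y}$ with $\cone{F}$ would require $y_{i,j} > 0$ for some $j \neq c(i)$ and some type $\neq j$ active in $\cone{F}$ at tetrahedron $i$ — but the only active type there is $c(i) \neq j$, so this is exactly the forbidden configuration; the point is that such a $\mathbf{y}$ exists inside $\cone{G}$ only if $\cone{G} \not\subseteq S$ where $S$ is $\cone{F}$'s coordinate subspace.

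The cleanest route, which I expect to adopt, collapses the above: let $S \subseteq \R^{3n}$ be the $n$-dimensional coordinate subspace (defined by $x_{i,j} = 0$ for all $j \neq c(i)$) in which $\cone{F}$ sits full-dimensionally. Every admissible point of $\qcone$ is compatible with every point of $\cone{F}$ precisely because any admissible $\mathbf{y}$ must, for tetrahedron $i$, have at most one active quadrilateral type, and if that type is $\neq c(i)$ then ... this does NOT force $\mathbf{y} \in S$ in general — so the lemma genuinely needs $\dim \cone{F} = n$, and the resolution is that $\cone{F}$, being full-dimensional in $S$, contains a point with $x_{i,c(i)} > 0$ simultaneously for \emph{all} $i$ (take a relative interior point), call it $\mathbf{x}^\ast$. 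Now for any admissible $\mathbf{y}$ and any tetrahedron $i$ with an active type $j$: if $j \neq c(i)$, then $\mathbf{x}^\ast$ and $\mathbf{y}$ are incompatible at $i$. So compatibility of $\mathbf{y}$ with $\mathbf{x}^\ast$ forces every active type of $\mathbf{y}$ to equal the corresponding $c(i)$, i.e. $\mathbf{y} \in S$, hence $\mathbf{y}$ is compatible with \emph{all} of $\cone{F}$. Applying this to a point $\mathbf{y}$ in any maximal admissible face $\cone{G}$: by Corollary~\ref{c-maxadm-vertices}, $\cone{F}$ is a maximal set of pairwise compatible admissible points, and since $\mathbf{x}^\ast \in \cone{F}$, any $\mathbf{y} \in \cone{G}$ incompatible with $\cone{F}$ would be incompatible with $\mathbf{x}^\ast$ — but then $\cone{G}$ is not pairwise compatible with $\mathbf{x}^\ast \in \qcone$, contradicting that $\cone{G}$, being maximal admissible, already contains every admissible point compatible with all of it. The honest obstacle, and the step I'd write most carefully, is the ``relative interior point'' claim: that a cone $\cone{F}$ which is full-dimensional in the coordinate subspace $S$ contains a point strictly positive on every coordinate of $S$. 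This follows because each coordinate $x_{i,c(i)}$, not being identically zero on the full-dimensional cone $\cone{F}$, is positive on a dense relatively open subset of $\cone{F}$; a finite intersection of such dense open sets is nonempty. Everything else is bookkeeping with the definitions of compatibility and admissibility, and two applications of Corollary~\ref{c-maxadm-vertices} (once to $F$ and once to conclude $G = F$).
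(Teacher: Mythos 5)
Your setup is sound: passing to $\qcone$, deducing that $\cone{F}$ lies in a coordinate subspace $S$ of dimension exactly $n$ (with a well-defined surviving type $c(i)$ for each tetrahedron), and producing a relative-interior point $\mathbf{x}^\ast \in \cone{F}$ with $x^\ast_{i,c(i)} > 0$ for all $i$. You correctly conclude that any admissible $\mathbf{y}$ compatible with $\mathbf{x}^\ast$ must lie in $S$ and hence is compatible with all of $\cone{F}$ (and therefore, by Lemma~\ref{l-maxadm-cat}, lies in $\cone{F}$).

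The gap is the final step. You need to show that \emph{every} point of $\cone{G}$ is compatible with $\mathbf{x}^\ast$ (equivalently, $\cone{G} \subseteq S$), but your closing sentence does not establish this. You argue that if some $\mathbf{y} \in \cone{G}$ is incompatible with $\mathbf{x}^\ast$, this ``contradicts that $\cone{G}$\ldots already contains every admissible point compatible with all of it.'' That is not a contradiction: Corollary~\ref{c-maxadm-vertices} says a maximal admissible face contains every admissible point compatible with \emph{all of its own} points; it does not say the face is compatible with every external admissible point. Indeed, if $\cone{F} \neq \cone{G}$ then Corollary~\ref{c-maxadm-vertices} guarantees incompatible points \emph{do} exist between them, so such a $\mathbf{y}$ is exactly what you should expect. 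The claim ``$\cone{G} \subseteq S$'' is essentially the whole content of the lemma, and your argument uses only the structure of $\cone{F}$ to try to prove it, which cannot work in isolation.

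The paper closes this gap with an observation your argument does not use: by Lemma~\ref{l-adm-face-to-compatible}, \emph{both} $\cone{F}$ and $\cone{G}$ must each vanish on at least two of the three quadrilateral types in every tetrahedron, so by pigeonhole they \emph{share} at least one vanishing coordinate hyperplane $x_{i,h(i)} = 0$ per tetrahedron. Intersecting the $2n$-dimensional matching-equation subspace with these $n$ shared hyperplanes cuts the dimension to at most $n$ (using linear independence of the tetrahedral solutions $\tet{1},\ldots,\tet{n}$, each of which survives the matching equations but is killed by its own hyperplane). Since $\cone{F}$ is $n$-dimensional and sits inside this intersection, the intersection is exactly its affine hull, and $\cone{G}$ lies inside it too; hence $\cone{G} \subseteq \qcone \cap \mathrm{aff}(\cone{F}) = \cone{F}$, contradicting maximality of $\cone{G}$. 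The symmetric use of the constraint on $\cone{G}$ is the missing ingredient; once you add it, your relative-interior machinery becomes unnecessary, as the affine-hull argument alone finishes the job.
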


\begin{proof}
    Suppose that we have two distinct maximal admissible faces
    $F,G \subseteq \qproj$ where $\dim F = n-1$.
    Once again we work in the quadrilateral solution cone $\qcone$,
    using the corresponding
    maximal admissible faces $\cone{F},\cone{G}$ with $\dim \cone{F} = n$.

    For each $i=1,\ldots,n$, Lemma~\ref{l-adm-face-to-compatible}
    shows that face $\cone{F}$ must lie within at least two of the
    three hyperplanes
    $x_{i,1} = 0$, $x_{i,2} = 0$ and $x_{i,3} = 0$.  Likewise,
    $\cone{G}$ must lie within at least two of these hyperplanes, and so
    both $\cone{F}$ and $\cone{G}$ must \emph{simultaneously} lie in
    at least one of the hyperplanes
    $x_{i,1} = 0$, $x_{i,2} = 0$ or $x_{i,3} = 0$.  Without loss of
    generality let this common hyperplane be $x_{i,1}=0$.

    Let $S$ be the solution space to the quadrilateral matching
    equations in $\R^{3n}$; by Theorem~\ref{t-matching-dim} we have
    $\dim S = 2n$.
    Let $S'$ be the subspace of $S$ formed by intersecting $S$ with each
    of the hyperplanes $x_{i,1}=0$ for $i=1,\ldots,n$.

    Each of the tetrahedral solutions $\tet{i}$ belongs to $S$ but not
    $S'$.  It is clear that the tetrahedral solutions are linearly
    independent (their non-zero coordinates appear in distinct
    positions), and so $\dim S' \leq \dim S - n = n$.  Faces
    $\cone{F}$ and $\cone{G}$ still lie within $S'$ however, and
    because $\dim \cone{F} = n$ it follows that $\dim S' = n$
    and that $S'$ is the affine hull of $\cone{F}$.

    We now see that the face $\cone{G}$ lies within the affine hull of
    the face $\cone{F}$; it follows that $\cone{G}$ must be a subface of
    $\cone{F}$, contradicting the maximality of $\cone{G}$.
\end{proof}

Lemmata~\ref{l-quad-max-dim} and~\ref{l-quad-bound-dim} together bound the
number of maximal admissible faces of every dimension in $\qproj$.
We can now use
these results to prove our main theorem, which is a new bound on the
size of the quadrilateral solution set (that is, the number of admissible
vertices of $\qproj$).

\begin{theorem} \label{t-quad-bound}
    The size of the quadrilateral solution set is asymptotically
    bounded above by
    \[ O\left(\left[\frac{3+\sqrt{13}}{2}\right]^n\right)
    \quad \simeq \quad O(3.303^n). \]
\end{theorem}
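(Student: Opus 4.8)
The plan is to bound the quadrilateral solution set by summing, over all dimensions $d$, the number of admissible vertices contributed by the maximal admissible faces of dimension $d$. By Corollary~\ref{c-face-union}, every admissible vertex of $\qproj$ lies in some maximal admissible face, so it suffices to bound the total number of vertices appearing across all maximal admissible faces. For a fixed dimension $d$, Lemma~\ref{l-quad-bound-dim} tells us there are at most $3^{n-1-d}$ maximal admissible faces of dimension $d$, and Lemma~\ref{l-quad-max-dim} tells us we only need to consider $0 \le d \le n-1$. Each such face is a $d$-dimensional polytope, and by Corollary~\ref{c-adm-facets} it has at most $n$ facets; hence by McMullen's upper bound theorem (Theorem~\ref{t-ubt}) each contributes at most $M_{d,n}$ vertices (handling the small cases $d \in \{0,1\}$ separately, where a point or segment trivially contributes at most $2$). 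Putting these together gives a bound of the shape
\[ \sum_{d=2}^{n-1} 3^{n-1-d}\, M_{d,n} \;+\; O(1). \]

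The next step is to massage this sum into a form where Corollary~\ref{c-m-sums} applies. I would factor out $3^{n-1}$ and rewrite $3^{n-1-d} = 3^{n-1}\cdot 3^{-d} = 3^{n-1} (1/3)^d$, so the sum becomes $3^{n-1} \sum_{d=2}^{n-1} \alpha^d M_{d,n}$ with $\alpha = 1/3$. This is exactly the sum treated in Corollary~\ref{c-m-sums} (with $\alpha = 1/3 \le 1$ and $k = n$), which gives the asymptotic growth rate
\[ \sum_{d=2}^{n-1} (1/3)^d M_{d,n} \in \Theta\!\left(\left[\frac{1+\sqrt{1+4/9}}{2}\right]^n\right) = \Theta\!\left(\left[\frac{1+\sqrt{13}/3}{2}\right]^n\right) = \Theta\!\left(\left[\frac{3+\sqrt{13}}{6}\right]^n\right). \]
Multiplying back by the $3^{n-1} = \Theta(3^n)$ prefactor yields $\Theta\bigl(\bigl[3 \cdot \tfrac{3+\sqrt{13}}{6}\bigr]^n\bigr) = \Theta\bigl(\bigl[\tfrac{3+\sqrt{13}}{2}\bigr]^n\bigr)$, and since $\tfrac{3+\sqrt{13}}{2} \simeq 3.3028$, this gives the claimed $O(3.303^n)$ bound.

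The steps that need care rather than being wholly routine are: (1) checking that McMullen's theorem is legitimately applicable to each maximal admissible face — this requires $2 \le d < k$ where $k$ is the number of facets, and since Corollary~\ref{c-adm-facets} only gives $k \le n$ (not $k = n$), I should invoke Lemma~\ref{l-ubt-k} to replace $M_{d,k}$ by the larger $M_{d,n}$, and also handle the boundary case where a face might have fewer than $d+1$ facets (impossible for a genuine $d$-polytope, but worth a remark); (2) the low-dimensional cases $d = 0$ and $d = 1$, which McMullen does not cover but which contribute only $1$ and $2$ vertices respectively per face, and there are at most $3^{n-1}$ and $3^{n-2}$ such faces — these contribute $O(3^n)$, comfortably inside the target bound; and (3) confirming that an admissible vertex counted in two different maximal admissible faces is not a problem, since we only need an upper bound and double-counting can only help. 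The main obstacle, though mild, is item (1): ensuring the hypotheses of Theorem~\ref{t-ubt} are met uniformly and that the monotonicity Lemma~\ref{l-ubt-k} is correctly deployed to reduce everything to $M_{d,n}$ so that Corollary~\ref{c-m-sums} applies verbatim.
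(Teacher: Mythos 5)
Your proposal is correct and follows essentially the same route as the paper: decompose by maximal admissible faces of each dimension, count each class via Lemma~\ref{l-quad-bound-dim}, bound per-face vertices via Corollary~\ref{c-adm-facets}, Theorem~\ref{t-ubt} and Lemma~\ref{l-ubt-k}, and feed the resulting geometric sum into Corollary~\ref{c-m-sums}. The only slip is writing ``$+\,O(1)$'' for the $d\in\{0,1\}$ contributions in your first display --- as you yourself correct in item~(2), these contribute $O(3^n)$, not $O(1)$ --- but since $3 < (3+\sqrt{13})/2$ this does not affect the final bound.
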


\begin{proof}
    Let $\kappa$ denote the number of admissible vertices of the
    quadrilateral projective solution space $\qproj$.
    Our strategy is to bound $\kappa$ by working through the maximal
    admissible faces of each dimension.
    To avoid small-case irregularities, we assume that $n \geq 3$.

    More specifically, each admissible vertex must belong to some
    maximal admissible face of dimension $\geq 0$.  We can therefore
    bound $\kappa$ by (i)~computing McMullen's bound for the number
    of vertices of each maximal admissible face, and then (ii)~summing
    these bounds over all maximal admissible faces of all dimensions.
    We might count some vertices multiple times in this sum,
    but each vertex will be counted at least once.

    We piece this sum together one dimension at a time, using
    Lemma~\ref{l-quad-bound-dim} to bound the number of maximal
    admissible $d$-faces for each $d$.
    \begin{itemize}
        \item There are $\leq 3^{n-1}$ maximal admissible $0$-faces,
        adding $3^{n-1}$ admissible vertices to our sum.
        \item There are $\leq 3^{n-2}$ maximal admissible $1$-faces,
        adding $2 \cdot 3^{n-2}$ admissible vertices to our sum
        (since each $1$-face is an edge, and has precisely two vertices).
        \item For each $d$ in the range $2 \leq d \leq n-1$,
        there are $\leq 3^{n-1-d}$ maximal admissible $d$-faces.
        Each of these $d$-faces has at most $n$ facets
        (Corollary~\ref{c-adm-facets})
        and therefore at most $M_{d,n}$ vertices (Theorem~\ref{t-ubt} and
        Lemma~\ref{l-ubt-k}).
        This adds $\leq 3^{n-1-d} \cdot M_{d,n}$ admissible vertices to
        our sum.
    \end{itemize}

    By Lemma~\ref{l-quad-max-dim} there are no admissible $d$-faces for
    any dimension $d \geq n$, and so our final bound on $\kappa$ becomes
    \begin{align*}
        \kappa \quad&\leq\quad 3^{n-1} + 2 \cdot 3^{n-2} +
            \sum_{d=2}^{n-1} 3^{n-1-d} \cdot M_{d,n} \\
        &=\quad 3^{n-1} + 2 \cdot 3^{n-2} +
            3^{n-1} \sum_{d=2}^{n-1} (1/3)^d \cdot M_{d,n} \\
        &\in\quad O\left(3^n + 3^n \cdot \left[
            \frac{1+\sqrt{1+4/9}}{2} \right]^n \right),
    \end{align*}
    using the asymptotic bound from Corollary~\ref{c-m-sums}.
    The second term in this final expression dominates the first, and we have
    \[ \kappa \in O\left(\left[3 \cdot
        \frac{1+\sqrt{13/9}}{2} \right]^n\right) =
        O\left(\left[\frac{3+\sqrt{13}}{2}\right]^n\right)
        \simeq O(3.303^n). \]
\end{proof}

\section{The standard solution set} \label{s-std}

Having established new bounds for the quadrilateral projective
solution space $\qproj \subseteq \R^{3n}$, we can now transport this
information to the standard projective solution space
$\sproj \subseteq \R^{7n}$.

As noted in the introduction,
the first upper bound on the number of admissible vertices of
$\sproj$ was $128^n$, proven by Hass et~al.\ \cite{hass99-knotnp}.
The best bound known to date is approximately $O(29.03^n)$,
proven by the author \cite{burton10-complexity}.
The argument by Hass et~al.\ relies on the fact that
each vertex can be described as an intersection of facets of
$\sproj$, and with $\leq 7n$ facets there can be at
most $2^{7n}=128^n$ such intersections.
The bound of
$O(29.03^n)$ was obtained by deriving a simple asymptotic extension to
McMullen's upper bound theorem.

In this paper we tighten the best upper bound in standard coordinates
to approximately $O(14.556^n)$ admissible vertices.
Our strategy is to draw on our earlier
results in quadrilateral coordinates.  We begin by describing a bijection
between maximal admissible faces of $\qproj$ and $\sproj$,
and then once again we aggregate over faces of varying dimensions.

As a further application of these techniques, we examine the
special but important case of a one-vertex triangulation.
The author sketches a proof in \cite{burton10-dd} that
for a one-vertex triangulation the solution space $\sproj$ has
approximately $O(15^n/\sqrt{n})$ vertices.
Our final result of this paper is to tighten this bound to
approximately $O(4.852^n)$.

\begin{lemma} \label{l-bijection-std-quad}
    Let $v$ be the number of vertices in the underlying triangulation
    $\tri$.  Then there is a bijection between the maximal admissible
    faces of $\qproj$ and the maximal admissible faces of $\sproj$
    that maps $i$-faces of $\qproj$ to $(i+v)$-faces of $\sproj$ for
    every $i$.
\end{lemma}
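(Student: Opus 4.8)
The plan is to build the bijection directly from the quadrilateral projection map $\fpi\co\R^{7n}\to\R^{3n}$ introduced in Section~\ref{s-prelim-tri}, which maps admissible points of $\scone$ onto admissible points of $\qcone$ with kernel equal to the span of the $v$ linearly independent vertex links. First I would pass to the cones: by Lemma~\ref{l-proj-cone} the maximal admissible faces of $\sproj$ correspond to those of $\scone$ (with dimension shifted by $1$), and likewise for $\qproj$ and $\qcone$, so it suffices to exhibit a dimension-shifting bijection between the maximal admissible faces of $\scone$ and those of $\qcone$. The natural candidate is $\cone{F}\mapsto \fpi(\cone{F})$ in one direction and $\cone{G}\mapsto\fpi^{-1}(\cone{G})\cap\scone$ in the other.

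The key steps are as follows. (1) Show that if $\cone{F}$ is an admissible face of $\scone$, then $\fpi(\cone{F})$ is an admissible face of $\qcone$: admissibility is preserved because $\fpi$ preserves admissibility on the solution cones; that $\fpi(\cone{F})$ is actually a \emph{face} (not merely a convex subset) should follow from the fact that $\cone{F}$ is cut out from $\scone$ by coordinate hyperplanes $x_i=0$ on quadrilateral positions together with possibly the links, and the link directions lie in $\ker\fpi$, so the image is cut out from $\qcone$ by the corresponding quadrilateral coordinate hyperplanes --- here I would lean on the characterisation of admissible faces used in Corollary~\ref{c-adm-facets} and Lemma~\ref{l-adm-compatible-to-face}, namely that an admissible face is exactly the intersection of the ambient cone with a set of hyperplanes $x_i=0$. (2) Conversely, show $\fpi^{-1}(\cone{G})\cap\scone$ is an admissible face of $\scone$ of dimension $\dim\cone{G}+v$, using that the fibre of $\fpi$ over any admissible point of $\qcone$ is a translate of the $v$-dimensional link subspace intersected with $\scone$ (this is essentially the quadrilateral-to-standard reconstruction of \cite{burton09-convert,tollefson98-quadspace}); preimages of supporting hyperplanes are supporting hyperplanes, so faces pull back to faces, and admissibility is preserved since $\fpi$ reflects admissibility. (3) Check the two constructions are mutually inverse on admissible faces --- $\fpi(\fpi^{-1}(\cone{G})\cap\scone)=\cone{G}$ because $\fpi$ is onto the admissible part, and $\fpi^{-1}(\fpi(\cone{F}))\cap\scone=\cone{F}$ because $\cone{F}$ is saturated with respect to $\ker\fpi$ (every admissible face of $\scone$ contains all the link directions, by Corollary~\ref{c-maxadm-links} applied at the face level, or more precisely because an admissible face is cut out only by quadrilateral coordinate hyperplanes). (4) Both maps preserve the subface relation (they are restrictions of a linear map and its set-theoretic inverse), hence they carry \emph{maximal} admissible faces to \emph{maximal} admissible faces. (5) Track dimensions: $\fpi$ restricted to an admissible face $\cone{F}$ has kernel exactly the $v$-dimensional link subspace (which lies in every admissible face), so $\dim\fpi(\cone{F})=\dim\cone{F}-v$; translating back through Lemma~\ref{l-proj-cone} (subtract $1$ on each side) gives the stated correspondence $i$-faces of $\qproj\leftrightarrow(i+v)$-faces of $\sproj$.

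The main obstacle I anticipate is Step~(1): verifying that the \emph{image} of an admissible face under $\fpi$ is genuinely a face of $\qcone$, rather than just a compatible convex region. The cleanest route is to avoid arguing about images of faces abstractly and instead use the concrete description of admissible faces as intersections with quadrilateral coordinate hyperplanes $x_{i,j}=0$ (exactly as in the proofs of Corollary~\ref{c-adm-facets} and Lemma~\ref{l-adm-compatible-to-face}): a maximal admissible face of $\scone$ is determined by a choice, for each tetrahedron, of which quadrilateral coordinates are forced to zero, and the same bookkeeping data determines a maximal admissible face of $\qcone$; the bijection is then really a bijection of these combinatorial zero-patterns, with the analytic maps $\fpi$ and $\fpi^{-1}$ simply realising it geometrically. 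I would also need the subsidiary fact that every admissible face of $\scone$ contains the full link subspace (so that it is $\ker\fpi$-saturated) --- this is immediate from Corollary~\ref{c-maxadm-links} for maximal faces, and for the bijection I only need it there, so no extra work is required.
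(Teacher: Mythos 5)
Your overall framework matches the paper's: pass to the cones via Lemma~\ref{l-proj-cone}, use the quadrilateral projection $\pi$ as the bijection, and track dimensions via the $v$-dimensional kernel spanned by the vertex links (using Corollary~\ref{c-maxadm-links} to guarantee that each maximal admissible face of $\scone$ contains the whole link subspace). The difference lies in how you propose to verify that $\pi$ induces a well-defined bijection of maximal admissible faces.

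There are two gaps in your argument as structured. First, your step~(1) invokes the claim that ``an admissible face is exactly the intersection of the ambient cone with a set of hyperplanes $x_i=0$.'' That characterisation is correct for \emph{maximal} admissible faces (one sees this from the proof of Lemma~\ref{l-adm-compatible-to-face}: the intersection $G$ of the cone with the $2n$ forced zero-coordinate hyperplanes is an admissible face containing $F$, so if $F$ is maximal then $F=G$), but it fails for general admissible faces: a non-maximal admissible face of $\scone$ can require supporting hyperplanes involving \emph{triangle} coordinates, and these do not project under $\pi$, so the image of such a face need not be a face of $\qcone$ at all. Second, and because of this, your step~(4) --- ``both maps preserve the subface relation, hence they carry maximal admissible faces to maximal admissible faces'' --- does not go through: $\pi$ is not a bijection (nor even a well-defined face-to-face map) on the full admissible face semilattice, so you cannot derive maximality-preservation from order-preservation. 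Moreover, even for maximal faces, showing $\pi(\cone{F})$ is a face of $\qcone$ is not enough; you still must show it is \emph{maximal}, and for that the zero-pattern picture alone does not suffice --- you need the compatibility characterisation (Corollary~\ref{c-maxadm-vertices} or Lemma~\ref{l-maxadm-cat}), since a smaller zero-pattern could a priori cut out a larger admissible face.

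The paper avoids both problems by never leaving the class of maximal admissible faces: it shows directly that $\pi(\cone{F})$ is contained in some maximal admissible face $G$ of $\qcone$ (Lemma~\ref{l-adm-compatible-to-face}), and then that $\pi(\cone{F})=G$ by pulling back a putative witness of non-maximality and contradicting Corollary~\ref{c-maxadm-vertices}. This bypasses any need to argue that images of general admissible faces are faces. Your key insight --- that a maximal admissible face is determined by a zero-pattern of quadrilateral coordinates, and this combinatorial data transports directly between $\R^{7n}$ and $\R^{3n}$ --- is correct and could be turned into a complete proof, but only if you restrict to maximal admissible faces from the outset, prove image-is-a-face via the explicit hyperplane description, and then separately prove maximality of the image using the compatibility argument. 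As written, the argument leans on properties of $\pi$ on non-maximal admissible faces that do not hold.
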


\begin{proof}
    For convenience we work in the solution
    cones $\scone$ and $\qcone$ instead of the projective solution
    spaces $\sproj$ and $\qproj$; Lemma~\ref{l-proj-cone} shows this
    formulation to be equivalent.
    We establish our bijection in the
    direction from $\scone$ to $\qcone$ using the (linear)
    quadrilateral projection map
    $\pi \co \R^{7n} \to \R^{3n}$.  Recall from
    Section~\ref{s-prelim-tri} that $\pi$ is an onto map that
    preserves admissibility and inadmissibility, as well as
    compatibility and incompatibility.

    We can apply the map $\pi$ to sets of points (and in particular,
    faces of $\scone$).  Let
    $\fpi(X)$ denote the image $\{ \pi(\mathbf{x})\ |\ \mathbf{x} \in X \}$
    for any set $X \subseteq \scone$.
    Although $\fpi$ might not map faces to faces in general,
    we claim that it does map \emph{maximal admissible faces} of $\scone$
    to maximal admissible faces of $\qcone$.  Moreover, we claim that
    $\fpi$ is in fact the bijection that we seek.  We prove these claims
    in stages.

    \begin{itemize}
        \item \emph{$\fpi$ maps maximal admissible faces of $\scone$
        to maximal admissible faces of $\qcone$.}

        Let $F$ be a maximal admissible face of $\scone$.  Because
        $\pi$ preserves admissibility and compatibility,
        all points in $\fpi(F)$ are admissible and pairwise compatible.
        It follows from
        Lemma~\ref{l-adm-compatible-to-face} that there is some
        maximal admissible
        face $G$ of $\qcone$ for which $\fpi(F) \subseteq G$.

        If $\fpi(F)$ is not itself a maximal admissible face then we can find
        some admissible point $\mathbf{g} \in G \backslash \fpi(F)$.
        We know that $\mathbf{g}$ is compatible with every point in $\fpi(F)$
        (Lemma~\ref{l-adm-face-to-compatible}),
        and because $\pi$ preserves inadmissibility and incompatibility
        it follows that every point in the preimage $\pi^{-1}(\mathbf{g})
        \subseteq \scone \backslash F$ is admissible and compatible with every
        point in $F$.
        This contradicts the assumption that $F$ is a maximal admissible
        face of $\scone$ (Corollary~\ref{c-maxadm-vertices}), and it
        follows that $\fpi(F)$ must indeed
        be a maximal admissible face of $\qcone$.

        \item \emph{As a map between maximal admissible faces,
        $\fpi$ is one-to-one.  That is, for every two distinct maximal
        admissible faces $F,G \subseteq \scone$, we have
        $\fpi(F) \neq \fpi(G)$.}

        Let $F$ and $G$ be distinct maximal admissible faces of $\scone$.
        By Corollary~\ref{c-maxadm-vertices} there
        exist admissible and \emph{incompatible} points
        $\mathbf{f} \in F$ and $\mathbf{g} \in G$.  Because $\pi$
        preserves incompatibility it follows that
        $\pi(\mathbf{f})$ and $\pi(\mathbf{g})$ are incompatible
        points in $\qcone$.
        That is, we have two incompatible points
        $\pi(\mathbf{f})$ and $\pi(\mathbf{g})$ in the maximal admissible
        faces $\fpi(F)$ and $\fpi(G)$ respectively,
        and from Corollary~\ref{c-maxadm-vertices} again it follows that
        $\fpi(F) \neq \fpi(G)$.

        \item \emph{As a map between maximal admissible faces,
        $\fpi$ is onto.  That is, for every maximal admissible face
        $G \subseteq \qcone$, there is a maximal admissible face
        $F \subseteq \scone$ for which $\fpi(F) = G$.}

        Let $G$ be any maximal admissible face of $\qcone$, and consider
        the preimage $\pi^{-1}(G)$.  Because $\pi$ preserves
        inadmissibility and incompatibility, $\pi^{-1}(G)$ must be a
        collection of admissible and pairwise compatible points in
        $\scone$.  By Lemma~\ref{l-adm-compatible-to-face} there is some
        maximal admissible face $F \subseteq \scone$
        for which $F \supseteq \pi^{-1}(G)$.
        This gives us $\fpi(F) \supseteq G$, and because both
        $\fpi(F)$ and $G$ are maximal admissible faces of $\qcone$ it
        follows that $\fpi(F) = G$.
    \end{itemize}

    This shows that $\fpi$ yields a bijection between the maximal
    admissible faces of $\scone$ and the maximal admissible faces of
    $\qcone$.
    All that remains now is to establish how $\fpi$ affects the dimensions
    of these faces.

    Let $F$ be some maximal admissible face in $\scone$.
    We know from Section~\ref{s-prelim-tri}
    that the kernel of the linear map $\pi$ is generated by the $v$
    linearly independent vertex links (where $v$ is the number of
    vertices in the underlying triangulation).  Moreover,
    Corollary~\ref{c-maxadm-links} shows that all $v$ vertex
    links belong to the maximal admissible face $F$.
    Therefore we must have $\dim{F} = \dim{\fpi(F)} + v$.
\end{proof}

It should be noted that $\qproj$ may contain no admissible points at
all; in this case $\qproj$ has a single maximal admissible face of
dimension $-1$ (the empty face).  In standard coordinates,
$\sproj$ will always have admissible points (in particular, we always
have the vertex links).

Now that we are equipped with this bijection, we aim to bound the
dimensions of the maximal admissible faces of $\sproj$.  To do this,
we must place a bound on the number of vertices $v$ of the underlying
triangulation.

\begin{lemma} \label{l-min-vert}
    Any closed and connected
    3-manifold triangulation with $n > 2$ tetrahedra can have
    at most $n+1$ vertices.
\end{lemma}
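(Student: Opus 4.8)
The plan is to count incidences between tetrahedra and vertices of the triangulation, using connectivity to bound how many "new" vertices each tetrahedron can contribute. Let $\tri$ have $n$ tetrahedra and $v$ vertices, and consider the dual graph whose nodes are the $n$ tetrahedra, with an edge joining two tetrahedra (or a tetrahedron to itself) for each of the $2n$ face gluings. Since $\tri$ is connected, this dual graph is connected, so it contains a spanning tree with $n-1$ edges; the remaining $n+1$ face gluings are the ``extra'' identifications beyond the tree.

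The key step is to process the tetrahedra in an order compatible with a spanning tree of the dual graph, starting from one tetrahedron and attaching each subsequent tetrahedron along a single tree-edge (a shared triangular face). First I would observe that a single isolated tetrahedron has $4$ vertices, and whenever we glue a new tetrahedron onto the partially-built complex along one triangular face, three of its four vertices are identified with vertices already present, so it contributes at most one genuinely new vertex. Thus after assembling all $n$ tetrahedra via the $n-1$ tree-edges, the number of distinct vertices is at most $4 + (n-1) = n+3$. This is close but not yet the claimed bound, so the remaining work is to account for the $n+1$ non-tree gluings: each such gluing identifies two triangular faces that are already present, forcing additional vertex identifications, and I would argue that these extra gluings must collectively reduce the vertex count by at least $2$ when $n > 2$.

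The main obstacle will be the last step: showing that the $n+1$ non-tree face identifications force at least two further vertex collapses (beyond what a naive count gives), and that this genuinely uses $n > 2$ rather than holding vacuously. The cleanest route is probably an Euler-characteristic argument: since $\tri$ is a closed $3$-manifold, $\chi(\tri) = 0$, and writing $\chi = v - e + f - n$ with $f = 2n$ (each of the $4n$ tetrahedron faces is glued in pairs) gives $v - e + 2n - n = 0$, i.e.\ $e = v + n$. One then bounds the number of edges from below in terms of $v$ and $n$ — for instance, each vertex link is a $2$-sphere triangulated by the corner triangles of the tetrahedra meeting that vertex, and counting triangles and edges in these vertex links, combined with $e = v+n$, should pin down $v \le n+1$ for $n > 2$ (with the small cases $n \le 2$ genuinely behaving differently, e.g.\ the standard $n=1$ triangulations of $S^3$ or lens spaces can have up to $2$ or $3$ vertices relative to their tetrahedron count). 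I would carry out the edge-counting carefully, treating the degenerate low-$n$ cases separately, since that is where the hypothesis $n>2$ is actually consumed.
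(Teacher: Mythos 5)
Your first half (spanning tree of the face-pairing graph, attach tetrahedra one at a time, deduce $v \leq n+3$) is exactly the paper's opening move. The problem is the proposed Euler-characteristic route for the remaining drop of~$2$, which does not actually supply any new information.

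The identity $e = v + n$ is correct, but the vertex-link computation you then invoke is \emph{the same identity}. Summing $\chi(\mathrm{link}(V)) = 2$ over all $v$ vertices, with $\sum_V T_V = 4n$ (one corner triangle per tetrahedron corner), $\sum_V E_V = 3f = 6n$ (one link edge per face-corner), and $\sum_V V_V = 2e$ (one link vertex per edge-end), yields $2v = 2e - 6n + 4n$, i.e.\ $e = v + n$ again. So ``combining'' the vertex-link counts with $e = v+n$ is circular: you are left needing an \emph{upper} bound $e \leq 2n+1$, and nothing in the sketch produces one. Semi-simplicial triangulations can have edges of degree $1$ or~$2$, so the usual $3e \leq 6n$ argument for simplicial complexes is unavailable, and any direct degree bound on edges would encounter exactly the same difficulty you are trying to avoid. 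The double tetrahedron ($n=2$, $v=4$, $e=6$) shows the hypothesis $n>2$ must be used in a structural way, and the Euler identity alone cannot see it.

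The paper closes the gap combinatorially, staying with the spanning tree: every tree on $n>2$ nodes has at least two leaves; each leaf contributes an ``isolated'' vertex in the partial complex $\tri_S$ that must be identified with some other vertex by a non-tree gluing, so two leaves generically cut $v$ down to $n+1$. The only obstruction is a redundancy where the two leaves' isolated vertices get identified \emph{with each other} (and nothing else), and a case analysis shows this forces the face-pairing graph to be an $n$-cycle with a loop at every node, which is then handled separately using a loop at a non-leaf. If you want a proof, you should either reproduce that leaf analysis or find an honest upper bound on $e$; the current sketch has a genuine gap at the crucial step.
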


\begin{proof}
    Let $\tri$ be such a triangulation, and
    let $G$ denote the \emph{face pairing graph} of $\tri$.  This
    is the connected 4-valent multigraph
    whose vertices represent tetrahedra of $\tri$ and whose
    edges represent identifications between tetrahedron faces
    (in particular, loops and multiple edges are allowed).
    See \cite{burton04-facegraphs} for further discussion and
    explicit examples of face pairing graphs.\footnote{%
    $G$ can also be thought of as the
    \emph{dual 1-skeleton} of $\tri$, with a
    \emph{dual vertex} at the centre of every tetrahedron of $\tri$ and a
    \emph{dual edge} running through every face of $\tri$.}

    Let $S$ be a spanning tree within $G$, and let $\tri_S$ denote the
    ``partial triangulation'' constructed from the same $n$ tetrahedra
    by making \emph{only} the face identifications described by the
    edges of $S$.  This means that $\tri_S$ is a connected simplicial complex
    formed from $n$ tetrahedra by identifying precisely $n-1$ pairs of
    faces.  Moreover, the original triangulation $\tri$ can be obtained from
    $\tri_S$ by identifying the remaining $n+1$ pairs of faces that
    correspond to the edges of $G \backslash S$.
    Figure~\ref{fig-spanning} illustrates a face pairing graph $G$ with
    a spanning tree $S$, and shows how the partial triangulation
    $\tri_S$ might appear.

    \begin{figure}[htb]
        \centering
        \includegraphics{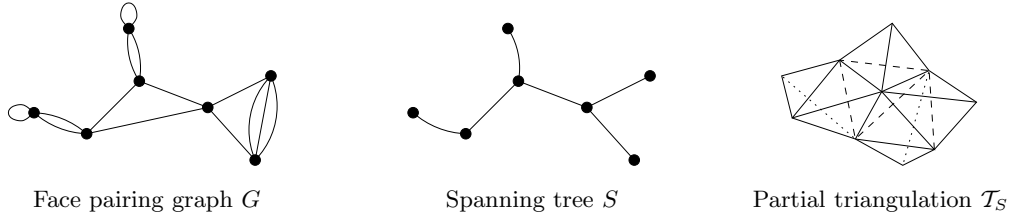}
        \caption{The partial triangulation $\tri_S$ corresponding to a
            spanning tree in $G$}
        \label{fig-spanning}
    \end{figure}

    Let $v$ and $v_S$ denote the number of vertices in $\tri$ and
    $\tri_S$ respectively.  It is clear that $v \leq v_S$, since we
    obtain $\tri$ from $\tri_S$ by making additional face
    identifications (which may identify vertices of $\tri_S$ together to
    reduce the total vertex count) but never adding new tetrahedra
    (and therefore never increasing the total vertex count).

    It is straightforward to count the number of vertices in $\tri_S$.
    Because $S$ is a spanning tree, we construct $\tri_S$ as follows:
    \begin{itemize}
        \item Begin with some initial tetrahedron $\Delta_1$,
        which gives us four initial vertices for $\tri_S$.
        \item Follow by joining some new tetrahedron $\Delta_2$ to
        $\Delta_1$ along a single face.  This introduces precisely one
        additional vertex to $\tri_S$, since the other three vertices of
        $\Delta_2$ (those on the joining face) become identified with
        the original vertices from $\Delta_1$.
        \item Next we join some new tetrahedron $\Delta_3$ to
        \emph{either} $\Delta_2$ or $\Delta_1$ along a single face.
        Again this introduces precisely one new vertex to $\tri_S$
        (the vertex of $\Delta_3$ not on the joining face).
        \item We continue this procedure, joining the remaining tetrahedra
        $\Delta_4,\ldots,\Delta_n$ into our structure along a single face
        each, creating one new vertex for $\tri_S$ every time.
    \end{itemize}
    It follows that the number of vertices in $\tri_S$ is precisely
    $v_S = n+3$, and we obtain $v \leq n+3$ as a result.

    We can reduce our bound from $n+3$ to $n+1$ by studying the
    \emph{leaves} of the tree $S$; that is, vertices of the tree with
    only one incident edge.  Each leaf corresponds to a tetrahedron of
    $\tri_S$ with only one face joined to the remainder of the structure.
    Moreover, the vertex opposite this face is not (yet) identified with any
    other vertices of any tetrahedron at all; we call this the
    \emph{isolated vertex} of the leaf.  This situation is illustrated
    in Figure~\ref{fig-leaf}.

    \begin{figure}[htb]
        \centering
        \includegraphics{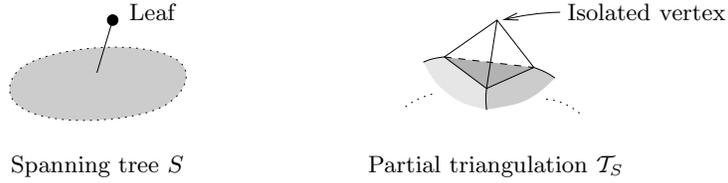}
        \caption{A tetrahedron of $\tri_S$ corresponding to a leaf in
            the spanning tree $S$}
        \label{fig-leaf}
    \end{figure}

    Every tree of size $n>2$ has at least two leaves; let $\ell$ be one
    such leaf, and let $\Delta_\ell$ be the corresponding tetrahedron in
    $\tri_S$.  Consider the three faces of $\Delta_\ell$ that surround
    the isolated vertex of $\ell$.  At least
    one of these faces must be joined to face of a \emph{different}
    tetrahedron in the final triangulation $\tri$; as a consequence,
    the isolated vertex of $\ell$ will be identified with some other
    tetrahedron vertex and we will have $v \leq v_S - 1 = n+2$
    vertices in total.

    We can repeat this argument upon a second leaf $\ell'$ to lower our
    bound once more, establishing the final result
    $v \leq v_S - 2 = n+1$.
    The only way this argument can fail is if both
    ``new'' vertex identifications are the same; that is, from our first
    leaf we find that the isolated vertex of $\ell$ is identified with the
    isolated vertex of $\ell'$, and then from our second leaf we find
    that the isolated vertex of $\ell'$ is identified with the isolated
    vertex of $\ell$.

    We are only forced into this redundancy if \emph{every} additional
    edge from $\ell$ in the complementary graph $S \backslash G$
    runs to $\ell'$ or is a loop back to $\ell$; likewise,
    every additional edge from $\ell'$ in $S \backslash G$ must run
    to $\ell$ or be a loop back to $\ell'$.
    In other words, we must have one of the two scenarios
    depicted in Figure~\ref{fig-redundant-leaves}.

    \begin{figure}[htb]
        \centering
        \includegraphics{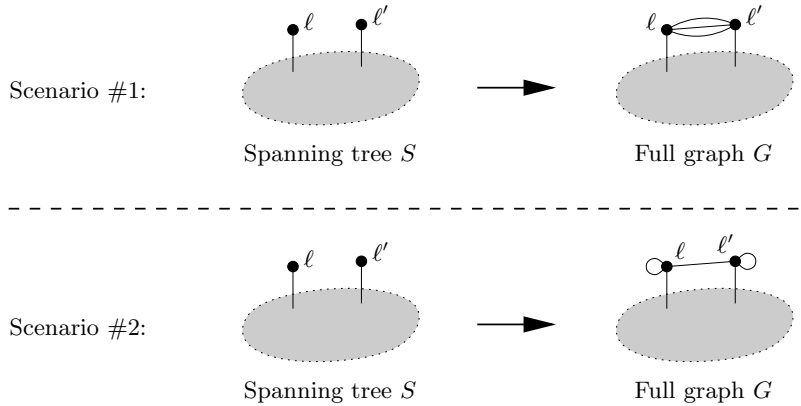}
        \caption{The two ``redundant'' scenarios in our analysis of leaves}
        \label{fig-redundant-leaves}
    \end{figure}

    Even still, we can avoid this redundancy if the tree $S$ has three or
    more leaves (we simply replace $\ell'$ with a different selection).
    In fact, given that we can choose \emph{any} spanning tree $S$, we
    are only forced into this redundancy if \emph{every} spanning tree
    within $G$ has precisely two leaves and gives one of the scenarios
    of Figure~\ref{fig-redundant-leaves}.  The only such connected
    4-valent multigraph $G$ on $n>2$ vertices is the graph depicted in
    Figure~\ref{fig-cycle-loops}; that is, a single $n$-cycle with a
    loop at every vertex.

    \begin{figure}[htb]
        \centering
        \includegraphics[scale=0.7]{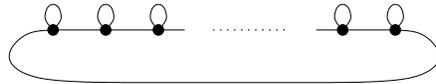}
        \caption{The only face pairing graph that forces redundancy in
            our leaf analysis}
        \label{fig-cycle-loops}
    \end{figure}

    For such a face pairing graph
    we can lower our bound from $n+3$ to $n+1$ as follows.
    Let $i$ be a non-leaf vertex of the tree $S$.  The full graph $G$
    has a loop at vertex $i$, which means that two distinct vertices of
    the corresponding tetrahedron in $\tri_S$ will be identified in the final
    triangulation $\tri$.  This identification does not involve
    the isolated vertices of the leaves, and so we can now return to our
    earlier argument on a single leaf to find a second (and different)
    identification between distinct vertices of $\tri_S$, showing that
    $v \leq v_S - 2 = n+1$.
\end{proof}

It can in fact be shown that this bound of $v \leq n+1$ is tight;
the proof involves a general construction for arbitrary $n$, and
we omit the details here.  For $n=2$ there is a closed 3-manifold
triangulation with $n+2 = 4$ vertices (this is the triangulation of the
3-sphere obtained by identifying the boundaries of two tetrahedra using the
identity mapping).

% Memo to me: The construction is a loop of folded tetrahedra, each with
% an internal degree one edge, together forming a triangulation of S^3.

We proceed now to the main result of this section, which is a new
bound on the asymptotic growth rate of the size of the standard solution
set (that is, the number of vertices of the standard projective solution
space $\sproj$).

\begin{theorem} \label{t-std-bound}
    The size of the standard solution set is asymptotically bounded above by
    \[ O\left(\left[9 \cdot \left(
        \frac{1+\sqrt{13/9}}{2}\right)^5\right]^n\right)
        \quad \simeq \quad O(14.556^n). \]
\end{theorem}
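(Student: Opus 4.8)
The plan is to run essentially the same aggregation argument used for Theorem~\ref{t-quad-bound}, but now in standard coordinates, converting dimension data via the bijection of Lemma~\ref{l-bijection-std-quad} and facet data via Corollary~\ref{c-adm-facets}. First I would fix a closed connected triangulation with $n > 2$ tetrahedra and $v$ vertices; by Lemma~\ref{l-min-vert} we have $v \leq n+1$. Lemma~\ref{l-bijection-std-quad} tells us that the maximal admissible faces of $\sproj$ are exactly the images of those of $\qproj$ under a dimension shift by $v$: a maximal admissible $d$-face of $\qproj$ corresponds to a maximal admissible $(d+v)$-face of $\sproj$. Combining this with Lemma~\ref{l-quad-bound-dim} and Lemma~\ref{l-quad-max-dim}, the number of maximal admissible faces of $\sproj$ of dimension $d' = d+v$ is at most $3^{\,n-1-d}$ for $0 \leq d \leq n-1$, and there are none for $d \geq n$.

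Next I would bound the number of vertices contributed by each maximal admissible face of $\sproj$. By Corollary~\ref{c-adm-facets}, every admissible face of $\sproj$ has at most $5n$ facets, so a maximal admissible face of dimension $d' \geq 2$ has at most $M_{d',5n}$ vertices by Theorem~\ref{t-ubt} together with Lemma~\ref{l-ubt-k} (padding the facet count up to $5n$ only increases the bound). Since every admissible vertex lies in some maximal admissible face, summing McMullen's bound over all maximal admissible faces of all dimensions overcounts but never undercounts the standard solution set. Writing the sum in terms of the original quadrilateral dimension $d$ (so the standard dimension is $d+v$), and handling the small cases $d+v \in \{0,1\}$ separately as in the proof of Theorem~\ref{t-quad-bound}, I get
\[
\kappa_{\mathrm{std}} \;\leq\; (\text{low-dim terms}) \;+\; \sum_{d=2}^{n-1} 3^{\,n-1-d}\, M_{d+v,\,5n}.
\]

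The analytic heart is to show this sum is $O\!\left(\left[9\cdot\left(\tfrac{1+\sqrt{13/9}}{2}\right)^5\right]^n\right)$. Since $v \leq n+1 \leq 5n/2$ (for $n \geq 1$) and $d \leq n-1$, we have $d+v \leq 2n \leq 5n/2 = (5n)/2$, so Lemma~\ref{l-ubt-d} applies along the whole range: $M_{d+v,5n} \leq M_{d',5n}$ for $d' = d+v$ up to $d' = 2n$, and in particular $M_{d+v,5n} \leq M_{2n,5n}$ is a crude first estimate, but I expect a sharper route is needed. The cleaner approach is to substitute $k = 5n$ and recognize that, after extracting the factor $3^{n-1-d}$, the sum has the shape $3^{n} \sum_d (1/3)^d M_{d+v,5n}$; shifting the index and absorbing the $v$ shift as a constant factor, this reduces to a sum of the form $\sum_{d'} \alpha^{d'} M_{d',k}$ with $\alpha$ a fixed constant and $k = 5n$, whose growth rate Corollary~\ref{c-m-sums} pins down as $\Theta\!\left(\left[\tfrac{1+\sqrt{1+4\alpha^2}}{2}\right]^{5n}\right)$. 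Tracking the constants — the $3^n$ prefactor from Lemma~\ref{l-quad-bound-dim}, the $3^{-v}$ from the index shift (bounded using $v \leq n+1$ and the choice $\alpha = 1/3$), and the fact that $1+4(1/9) = 13/9$ — should collapse everything to base $9 \cdot \left(\tfrac{1+\sqrt{13/9}}{2}\right)^5 \simeq 14.556$, with the dominant term again swallowing the low-dimensional corrections. The main obstacle I anticipate is bookkeeping the interaction between the variable shift $v$ (which is not constant — it grows with $n$) and the exponent in Corollary~\ref{c-m-sums}: one must confirm that replacing $d+v$ by a running index and the $3^{n-1-d}$ weight by $(1/3)^{d+v}$ up to an $n$-independent correction is legitimate, i.e.\ that the worst case $v = n+1$ genuinely produces the stated base rather than a larger one. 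I expect this works out precisely because $3^{n-1-d} = 3^{-v}\cdot 3^{n-1-d+v}$ and the extra $3^{-v}$, far from hurting, matches the $(1/3)^{v}$ needed to turn $\alpha = 1/3$ applied at standard dimension into the clean $5n$-exponent formula; making that cancellation rigorous, together with checking the $n \geq 3$ (equivalently $n > 2$) hypothesis of Lemma~\ref{l-min-vert}, is the delicate step.
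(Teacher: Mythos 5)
Your overall strategy matches the paper's proof of Theorem~\ref{t-std-bound}: transfer the face counts of Lemma~\ref{l-quad-bound-dim} to $\sproj$ via the bijection of Lemma~\ref{l-bijection-std-quad}, bound facets by $5n$ via Corollary~\ref{c-adm-facets}, apply McMullen's bound face-by-face, and evaluate the resulting sum with Corollary~\ref{c-m-sums}. However, the step you single out as delicate is exactly where your arithmetic goes astray. Substituting $e = d+v$ in $\sum_d 3^{n-1-d} M_{d+v,5n}$ gives $3^{n-1-d} = 3^{n-1-e+v} = 3^{n-1+v}\,(1/3)^e$, so the factor that comes out is $3^{n-1+v}$, \emph{not} $3^{-v}$. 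This factor grows with $v$ and \emph{does} hurt; Lemma~\ref{l-min-vert} ($v \le n+1$ for $n>2$) is what caps it at $3^{2n}=9^n$, and the worst case $v=n+1$ is precisely what produces the base $9$. Your sentence ``the extra $3^{-v}$, far from hurting, matches the $(1/3)^v$ needed'' has the sign reversed and would, if taken at face value, suggest that larger $v$ makes things \emph{better}, which is false. The paper sidesteps this $v$-dependent bookkeeping entirely by first invoking Lemma~\ref{l-ubt-d} (valid since $d+v \le 2n < 5n/2$) to replace $M_{d+v,5n}$ by the \emph{larger} $M_{d+n+1,5n}$, yielding the $v$-free sum $\sum_{e=n+1}^{2n} 3^{2n-e} M_{e,5n} = 9^n\sum_e (1/3)^e M_{e,5n}$ directly; both routes give the base $9\cdot\bigl(\tfrac{1+\sqrt{13/9}}{2}\bigr)^5$, but the paper's is cleaner and avoids the confusion you ran into.
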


\begin{proof}
    Let $\sigma$ denote the number of admissible vertices of the
    standard projective solution space $\sproj$.
    Following the analogous result in quadrilateral coordinates
    (Theorem~\ref{t-quad-bound}), our strategy is to bound $\sigma$
    by working through the maximal admissible faces of each dimension.
    As usual, we let $v$ denote the number of vertices in the underlying
    triangulation $\tri$.

    Once again we assume that $n \geq 3$ to avoid small-case
    anomalies.  Furthermore, we assume that the quadrilateral projective
    solution space $\qproj$ has at least one admissible vertex
    (otherwise it is simple to show that there are precisely $v \leq n+1$
    admissible vertices in $\sproj$, corresponding to the $v$ vertex links
    in $\tri$).

    Let $F$ be any maximal admissible face of $\sproj$.
    From Corollary~\ref{c-adm-facets} we know that $F$ has at most $5n$ facets.
    Furthermore, Lemma~\ref{l-quad-max-dim} and
    Lemma~\ref{l-bijection-std-quad} together show that
    $F$ has dimension $d+v$ for some $d$
    in the range $0 \leq d \leq n-1$.  Our immediate aim is to bound the
    number of vertices of $F$.  There are two cases to consider:
    \begin{itemize}
        \item If $d>0$ or $v>1$ then the dimension of $F$ is $\geq 2$,
        and we can combine McMullen's theorem with Lemma~\ref{l-ubt-k}
        to show that $F$ has at most $M_{d+v,5n}$ vertices.
        Using Lemma~\ref{l-min-vert} we then have
        $d+v \leq d+n+1 \leq 2n < 5n/2$, whereupon
        Lemma~\ref{l-ubt-d} gives us $M_{d+v,5n} \leq M_{d+n+1,5n}$.
        It follows that $F$ has at most $M_{d+n+1,5n}$ vertices.

        \item If $d=0$ and $v=1$ then $F$ is a 1-face (an edge)
        with precisely $2$ vertices.  It is simple to show that
        $2 \leq M_{n+1,5n} = M_{d+n+1,5n}$, so again $F$ has at most
        $M_{d+n+1,5n}$ vertices.
    \end{itemize}

    Once more we observe that each admissible vertex of $\sproj$
    is a vertex of some maximal admissible face, and so we can bound
    $\sigma$ by summing this bound of $M_{d+n+1,5n}$ over all maximal
    admissible faces.
    Lemma~\ref{l-quad-bound-dim} and Lemma~\ref{l-bijection-std-quad}
    together show that $\sproj$ has at most
    $3^{n-1-d}$ maximal admissible faces of dimension $d+v$ for each $d$,
    and so we have
    \begin{equation} \label{eqn-std-bound-sum}
        \sigma \leq \sum_{d=0}^{n-1} 3^{n-1-d} \cdot M_{d+n+1,5n}
        = \sum_{e=n+1}^{2n} 3^{2n-e} \cdot M_{e,5n}. % e = d+n+1
    \end{equation}
    We can loosen this bound by extending the summation index
    $e$ to the full range $2 \leq e < 5n$, yielding
    \[
    \sigma \leq \sum_{e=2}^{5n-1} 3^{2n-e} \cdot M_{e,5n}
    = 9^n \sum_{e=2}^{5n-1} (1/3)^e \cdot M_{e,5n},
    \]
    whereupon Corollary~\ref{c-m-sums} gives us an asymptotic growth
    rate of
    \[ \sigma
        \in O\left(9^n \cdot \left[
            \frac{1+\sqrt{1+4/9}}{2} \right]^{5n} \right)
        = O\left(\left[9 \cdot \left(
            \frac{1+\sqrt{13/9}}{2}\right)^5\right]^n\right)
        \simeq O(14.556^n). \]
\end{proof}

We finish this section by applying our techniques to the important
case of a one-vertex triangulation.  In this case we are able to strip an
extra $3^n$ from our bound, yielding the following asymptotic result.

\begin{theorem} \label{t-std-bound-v1}
    If we restrict our attention to triangulations with precisely
    one vertex, then
    the size of the standard solution set is asymptotically bounded above by
    \[ O\left(\left[3 \cdot \left(
        \frac{1+\sqrt{13/9}}{2}\right)^5\right]^n\right)
        \quad \simeq \quad O(4.852^n). \]
\end{theorem}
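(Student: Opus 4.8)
The plan is to follow the proof of Theorem~\ref{t-std-bound} almost verbatim, but to exploit the fact that a one-vertex triangulation has $v=1$. This fixes the dimension shift in the bijection of Lemma~\ref{l-bijection-std-quad} and removes the need to inflate face dimensions to their worst case, which is precisely where the extra factor of $3^n$ is saved.

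First I would reduce to the interesting case: let $\sigma$ denote the number of admissible vertices of $\sproj$, assume $n \geq 3$ to avoid small-case anomalies, and assume that $\qproj$ has at least one admissible vertex (otherwise $\sproj$ has exactly the single vertex link as its only admissible vertex, and the bound is immediate). Under this assumption the empty face is not a maximal admissible face of $\qproj$, so combining Lemma~\ref{l-bijection-std-quad} (with $v=1$) and Lemma~\ref{l-quad-max-dim} shows that every maximal admissible face $F$ of $\sproj$ has dimension $d+1$ for some $d$ with $0 \leq d \leq n-1$.

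Next I would bound the number of vertices of each such $F$. If $d=0$ then $F$ is an edge with two vertices; if $d \geq 1$ then $\dim F \geq 2$, and Corollary~\ref{c-adm-facets} gives $F$ at most $5n$ facets, so Theorem~\ref{t-ubt} together with Lemma~\ref{l-ubt-k} gives $F$ at most $M_{d+1,5n}$ vertices. By Lemma~\ref{l-quad-bound-dim} and Lemma~\ref{l-bijection-std-quad} there are at most $3^{n-1-d}$ maximal admissible faces of dimension $d+1$, and every admissible vertex of $\sproj$ lies in at least one of them, so
\[ \sigma \;\leq\; 2 \cdot 3^{n-1} \;+\; \sum_{d=1}^{n-1} 3^{\,n-1-d}\, M_{d+1,5n}. \]
Substituting $e = d+1$ and loosening by summing over the full range $2 \leq e \leq 5n-1$ yields
\[ \sigma \;\leq\; 2 \cdot 3^{n-1} \;+\; 3^{n} \sum_{e=2}^{5n-1} (1/3)^{e}\, M_{e,5n}, \]
and Corollary~\ref{c-m-sums} with $\alpha = 1/3$ and parameter $k = 5n$ shows that the sum is $\Theta\left(\left[\frac{1+\sqrt{13/9}}{2}\right]^{5n}\right)$, which dominates the first term, giving
\[ \sigma \;\in\; O\left(3^{n} \cdot \left[\frac{1+\sqrt{13/9}}{2}\right]^{5n}\right) \;=\; O\left(\left[3\left(\frac{1+\sqrt{13/9}}{2}\right)^{5}\right]^{n}\right) \;\simeq\; O(4.852^{n}). \]

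There is no substantial obstacle here beyond careful bookkeeping: the conceptual point is that fixing $v=1$ makes the dimension of each maximal admissible face of $\sproj$ exactly $d+1$, so, unlike in the proof of Theorem~\ref{t-std-bound}, one never needs to invoke Lemma~\ref{l-ubt-d} to push $M_{d+v,5n}$ up to $M_{d+n+1,5n}$, and the geometric-series coefficient is $3^{\,n-e}$ rather than $3^{\,2n-e}$ --- exactly the promised saving of $3^n$. The only points requiring a little attention are the boundary case $d=0$ (faces that are edges, with McMullen's bound inapplicable) and the need to rule out $0$-dimensional maximal admissible faces of $\sproj$, both handled by the reduction to the case where $\qproj$ has an admissible vertex.
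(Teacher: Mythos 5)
Your proposal is correct and follows essentially the same route as the paper's proof: exploit $v=1$ so that the dimension shift in Lemma~\ref{l-bijection-std-quad} is exactly $1$, handle $d=0$ separately as edges with two vertices, apply $M_{d+1,5n}$ for $d\geq 1$ via Corollary~\ref{c-adm-facets} and McMullen, sum using the $3^{n-1-d}$ bound from Lemma~\ref{l-quad-bound-dim}, and extend the range of $e$ to invoke Corollary~\ref{c-m-sums}. The paper presents its argument as a brief delta from Theorem~\ref{t-std-bound} (pointing to the replacements $M_{d+n+1,5n}\to M_{d+1,5n}$ and, for $d=0$, $M_{d+n+1,5n}\to 2$), but the underlying computation and justification are identical to yours.
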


\begin{proof}
    The argument is almost identical to the proof of
    Theorem~\ref{t-std-bound}, and we do not repeat the details here.
    The main difference arises in the derivation of
    equation~(\ref{eqn-std-bound-sum}):
    \begin{itemize}
        \item For the case $d>0$, we replace the bound $v \leq n+1$
        with the more precise $v=1$, allowing us to replace the term
        $M_{d+n+1,5n}$ with the tighter $M_{d+1,5n}$.

        \item For the case $d=0$, we cannot use McMullen's bound at
        all since we are looking at maximal admissible faces of
        dimension $d+v = 1$.
        Instead we note that every $1$-face is an edge with precisely
        two vertices, and we replace $M_{d+n+1,5n}$ with
        the constant $2$.
    \end{itemize}
    Separating out the cases $d>0$ and $d=0$,
    equation~(\ref{eqn-std-bound-sum}) then becomes
    \[
        \sigma \leq 2 \cdot 3^{n-1} +
            \sum_{d=1}^{n-1} 3^{n-1-d} \cdot M_{d+1,5n} \\
        = \frac23 \cdot 3^n + \sum_{e=2}^{n} 3^{n-e} \cdot M_{e,5n}.
    \]
    Again we extend the summation index $e$ to the full range
    $2 \leq e < 5n$, giving
    \[
    \sigma \leq \frac23 \cdot 3^n + \sum_{e=2}^{5n-1} 3^{n-e} \cdot M_{e,5n}
    = \frac23 \cdot 3^n + 3^n \sum_{e=2}^{5n-1} (1/3)^e \cdot M_{e,5n},
    \]
    whereupon Corollary~\ref{c-m-sums} shows the asymptotic growth rate to be
    \[ \sigma
        \in O\left(3^n + 3^n \cdot \left[
            \frac{1+\sqrt{1+4/9}}{2} \right]^{5n} \right)
        = O\left(\left[3 \cdot \left(
            \frac{1+\sqrt{13/9}}{2}\right)^5\right]^n\right)
        \simeq O(4.852^n). \]
\end{proof}

\section{Discussion} \label{s-discussion}

The complexity bounds of Sections~\ref{s-quad} and~\ref{s-std} are
significant improvements upon the prior state of the art.
The reason for this success is because we have been able to
integrate admissibility (in particular, the quadrilateral constraints)
with the high-powered machinery of polytope theory (in particular,
McMullen's upper bound theorem).  Previous results have either
used polytope theory on only a superficial level \cite{hass99-knotnp},
or else drawn on deeper polytope theory
but without any use of admissibility at all
\cite{burton10-complexity,burton10-dd}.

The difficulty in integrating admissibility with polytope theory arises
because the quadrilateral constraints are non-linear,
and the admissible region of each projective solution space is far from
being a convex polytope.  In this paper we circumvent these difficulties
by working with maximal admissible faces.  However, this leads to
certain inefficiencies, as we discuss further below.

It is known that any complexity bound on the size of the standard and
quadrilateral solution sets must be exponential, even if we restrict our
attention to one-vertex triangulations
\cite{burton10-complexity,burton10-extreme}.
However, the new bounds in this paper still leave significant room to move.
In standard coordinates the worst known cases grow with complexity
$O(17^{n/4}) \simeq O(2.03^n)$ in comparison to our $O(14.556^n)$;
see \cite{burton10-complexity} for details.\footnote{%
    These cases are constructed and analysed for all $n>5$, and
    experimental evidence supports the conjecture that these are
    the worst cases possible.}
In quadrilateral coordinates, comprehensive experimental evidence
from \cite{burton10-extreme} suggests that the worst cases
grow with complexity well
below $O(\phi^n) \simeq O(1.618^n)$, in contrast to our current
bound of $O(3.303^n)$.

This gap between theory and practice suggests that further research into
theoretical bounds could be fruitful.  The methods of this paper suggest
several potential avenues for improvement:
\begin{itemize}
    \item
    Because the proofs of Theorems~\ref{t-quad-bound} and~\ref{t-std-bound}
    iterate through each maximal admissible face, it is likely that we
    count each admissible vertex many times over.
    Finding a mechanism to avoid this multiple-counting could help tighten our
    bounds further.

    \item
    The key to all of the new bounds in this paper is
    Lemma~\ref{l-quad-bound-dim}, where we show that $\qproj$ has at
    most $3^{n-1-d}$ maximal admissible faces of each dimension $d \leq n-1$.
    This bound has been empirically tested against all $\sim 150$~million
    closed 3-manifold triangulations of size $n \leq 9$ (the same census
    used in \cite{burton10-complexity}), with intriguing results.

    \begin{table}[tb]
    \small
    \centering
    \begin{tabular}{c|r|r|r|r|r|r|r|r|r|r}
    Number of & \multicolumn{9}{c|}{Most maximal admissible faces
         of dimension \ldots } & \multicolumn{1}{c}{Number of} \\
    tetrahedra ($n$) & 0 & 1 & 2 & 3 & 4 & 5 & 6 & 7 & 8 & triangulations \\
    \hline
    1 & \textbf{1} & \phantom{000} & \phantom{000} & \phantom{000} &
        \phantom{000} & \phantom{000} & \phantom{000} & \phantom{000} & 
        \phantom{000} & 4 \\
    2 & \textbf{3} & \textbf{1} &&&&&&&& 17 \\
    3 & \phantom{00}4 & \textbf{3} & \textbf{1} &&&&&&& 81 \\
    4 & 5 & \textbf{9} & \textbf{3} & \textbf{1} &&&&&& 577 \\
    5 & 6 & 15 & \textbf{9} & \textbf{3} & \textbf{1} &&&&& 5\,184 \\
    6 & 4 & 22 & \textbf{27} & \textbf{9} & \textbf{3} & \textbf{1} &&&&
        57\,753 \\
    7 & 8 & 31 & 50 & \textbf{27} & \textbf{9} & \textbf{3} & \textbf{1} &&&
        722\,765 \\
    8 & 6 & 40 & 78 & \textbf{81} & \textbf{27} & \textbf{9} & \textbf{3} &
        \textbf{1} && 9\,787\,509 \\
    9 & 4 & 48 & 118 & 180 & \textbf{81} & \textbf{27} & \textbf{9} &
        \textbf{3} & \textbf{1} & 139\,103\,032
    \end{tabular}
    \caption{The largest number of maximal admissible faces of various
    dimensions}
    \label{tab-maxadm}
    \end{table}

    The outcomes of this testing are summarised in Table~\ref{tab-maxadm}.
    For high dimensions $d \geq \frac{n}{2}-1$,
    the bound of $\leq 3^{n-1-d}$ maximal admissible faces appears to be
    tight (these numbers appear in bold in the table).
    For low dimensions $d < \frac{n}{2}-1$ the number of maximal
    admissible faces drops away significantly, right down to what
    appears to be $O(n)$ maximal admissible faces of dimension $0$.

    As an exploratory exercise, for each $n \leq 9$ we can work through the
    original proof of Theorem~\ref{t-quad-bound} but replace each bound of
    $3^{n-1-d}$ maximal admissible $d$-faces with the corresponding figure
    from Table~\ref{tab-maxadm}.  The resulting bounds on the number of
    admissible vertices of $\qproj$ are shown in Table~\ref{tab-maxadm-expt},
    and their growth rate settles down to
    roughly $O(2.86^n)$, well below our current bound of
    $O(3.303^n)$.
    This suggests that, if we can tighten Lemma~\ref{l-quad-bound-dim}
    for low dimensions, we can significantly improve our bounds again.

    \begin{table}[tb]
    \small
    \centering
    \begin{tabular}{l|r|r|r|r|r|r|r|r|r}
    Number of tetrahedra ($n$) & 1 & 2 & 3 & 4 & 5 & 6 & 7 & 8 & 9 \\
    \hline
    Max.\ number of admissible vertices &
        1 & 5 & 13 & 39 & 104 & 315 & 859 & 2458 & 7018 \\
    % Relative growth from previous entry & -- &
    %     5.00 & 2.60 & 3.00 & 2.67 & 3.03 & 2.73 & 2.86 & 2.86
    \end{tabular}
    \caption{Empirical complexity bounds based on
        the results of Table~\ref{tab-maxadm}}
    \label{tab-maxadm-expt}
    \end{table}

    \item
    Finally, even for high-dimensional faces where Lemma~\ref{l-quad-bound-dim}
    does appear to be tight, we know from Lemma~\ref{l-quad-simultaneous} that
    equality cannot hold for all high dimensions \emph{simultaneously}.
    Empirical testing again suggests that Lemma~\ref{l-quad-simultaneous} is
    merely one example of a larger set of constraints, and exploring
    these constraints may yield more useful information about the
    structure and number of maximal admissible faces.
\end{itemize}

For a final observation, we return to the worst known cases in standard
coordinates.
These are pathological triangulations of the 3-sphere for arbitrary $n > 5$,
each with $O(17^{n/4})$ admissible vertices in $\sproj$, and there is strong
empirical evidence \cite{burton10-complexity}
to suggest that this family of triangulations
yields the largest number of vertices for all $n$.

What is interesting about these cases is each triangulation has
\emph{only one} maximal admissible face.  In quadrilateral coordinates
this maximal face is just an $(n-1)$-simplex, and the quadrilateral
projective solution space $\qproj$ has only $n$ admissible vertices in
total.  In other words, for these cases
the pathological complexity only appears in the
extension to standard coordinates.  These observations suggest that a
better understanding of the relationships between the face lattices in
$\sproj$ and $\qproj$ could be an important step in achieving stronger
bounds on the complexities of these polytopes.

%%%%%%%%%%%%%%%%%%%%%%%%%%%%%%%%%%%%%%%%%%%%%%%%%%%%%%%%%%%%%%%%%%%%%%%%
%
%   Acknowledgements
%
%%%%%%%%%%%%%%%%%%%%%%%%%%%%%%%%%%%%%%%%%%%%%%%%%%%%%%%%%%%%%%%%%%%%%%%%

\section*{Acknowledgements}

The author is grateful to the Australian Research Council for their support
under the Discovery Projects funding scheme (project DP1094516).

%%%%%%%%%%%%%%%%%%%%%%%%%%%%%%%%%%%%%%%%%%%%%%%%%%%%%%%%%%%%%%%%%%%%%%%%
%
%   Bibliography
%
%%%%%%%%%%%%%%%%%%%%%%%%%%%%%%%%%%%%%%%%%%%%%%%%%%%%%%%%%%%%%%%%%%%%%%%%

% XTODO: Check arXiv references and also Exp. Math. reference.
\small
\bibliographystyle{amsplain}
\bibliography{pure}

%%%%%%%%%%%%%%%%%%%%%%%%%%%%%%%%%%%%%%%%%%%%%%%%%%%%%%%%%%%%%%%%%%%%%%%%
%
%   Author contact and affiliation
%
%%%%%%%%%%%%%%%%%%%%%%%%%%%%%%%%%%%%%%%%%%%%%%%%%%%%%%%%%%%%%%%%%%%%%%%%

\bigskip
\noindent
Benjamin A.~Burton \\
School of Mathematics and Physics, The University of Queensland \\
Brisbane QLD 4072, Australia \\
(bab@maths.uq.edu.au)

\end{document}